\documentclass{amsart}
\usepackage{amssymb}
\usepackage{eucal}
\usepackage{amsfonts}
\usepackage{epsfig}
\usepackage{color}
\usepackage[all]{xy}
\usepackage[pdftex]{hyperref}
\vfuzz2pt 
\hfuzz2pt 
%
\let\oldcite\cite                                  

\newtheorem{thm}{Theorem}[section]
\newtheorem{cor}[thm]{Corollary}
\newtheorem{lem}[thm]{Lemma}
\newtheorem{prop}[thm]{Proposition}
\theoremstyle{definition}
\newtheorem{defn}[thm]{Definition}
\theoremstyle{remark}

\numberwithin{equation}{section} \theoremstyle{remark}
\newtheorem{ex}[thm]{Example}

\newcommand{\X}{\mathcal{X}}
\newcommand{\Y}{\mathcal{Y}}
\newcommand{\Z}{\mathcal{Z}}
\newcommand{\PP}{\mathcal{P}}

\newcommand{\F}{\mathcal{F}}

\newcommand{\B}{\mathcal{B}}
\newcommand{\A}{\mathcal{A}}

\newcommand{\HH}{\mathcal{H}}
\newcommand{\G}{\mathcal{G}}

\newcommand{\bbX}{\mathbb{X}}

\newcommand{\tX}{\tilde{\X}}

\let\:=\colon

\newcommand{\be}{\beta}
\newcommand{\ep}{\epsilon}

\newcommand{\Ra}{\Rightarrow}

\newcommand{\sfT}{\mathsf{T}}
\newcommand{\Top}{\mathsf{Top}}
\newcommand{\CGTop}{\mathsf{CGTop}}

\newcommand{\x}{\times}

\newcommand{\Hom}{\operatorname{Hom}}

\newcommand{\Map}{\operatorname{Map}}

\newcommand{\hFib}{\operatorname{hFib}}
\newcommand{\Fib}{\operatorname{Fib}}
\newcommand{\Tor}{\operatorname{Tor}}

\newcommand{\id}{\operatorname{id}}
\newcommand{\pr}{\operatorname{pr}}
\newcommand{\ev}{\operatorname{ev}}

\def\smashedst{\setbox0=\hbox{$\rightrightarrows$}\ht0=4pt\box0}
\newcommand{\sst}[1]{\stackrel{#1}{\smashedst}}

\def\twomorphism{\setbox0=\hbox{$\Rightarrow$}\ht0=4pt\box0}
\newcommand{\twomor}[1]{\stackrel{#1}{\twomorphism}}

\newcommand{\arf}[6]{\ar@{}@<#3>#2 | (#4){#6}="#5" \ar#1#2  }
\newcommand{\ars}[4]{\arf{#1}{#2}{#3}{#4}{s}{}}
\newcommand{\art}[4]{\arf{#1}{#2}{#3}{#4}{t}{.}}

%
%
%
%

\begin{document}
\title{Fibrations of topological stacks}
\author{Behrang Noohi}

\begin{abstract}
 In this note we define fibrations of topological stacks and establish their main
 properties. We prove various standard results about fibrations 
 (fiber homotopy exact sequence, Leray-Serre and Eilenberg-Moore
 spectral sequences, etc.). We
 prove various criteria for a morphism of topological stacks to be a fibration,
 and use these to produce  examples of  fibrations. We prove that
 every morphism of topological stacks factors through a fibration 
 and construct the homotopy fiber
 of a morphism of topological stacks.  When restricted to 
 topological spaces our notion of fibration coincides with the classical one.
\end{abstract}

\maketitle

\tableofcontents

 \section{Introduction}{\label{S:Introduction}}

In this note we define fibrations of topological stacks and establish their main
properties. Our theory generalizes the classical theory in the sense that
when restricted to ordinary topological spaces our notion(s) of fibration 
reduce to the usual one(s) for topological spaces. 

We prove some standard results for fibrations of topological stacks: the
fiber homotopy exact sequence (Theorem \ref{T:fhes}), the Leray-Serre 
spectral sequence (Theorems \ref{T:HSS} and \ref{T:CSS}), the Eilenberg-Moore
spectral sequence (Theorem \ref{T:EM}), and so on.
We also prove that every morphism of topological stacks factors
through a fibration (Theorem \ref{T:replacement}). We use this
to define the homotopy fiber $\hFib(f)$ of a morphism $f \: \X \to \Y$ of  
stacks (see $\S$\ref{S:Factor}). 

Since our fibrations are not assumed to be representable, it is often
not easy to check wether a given morphism of stacks is a fibration straight
from the definition. We prove the following useful local criterion for
a map $f$ to be a weak Serre fibration;\footnote{
For most practical purposes (e.g, constructing spectral sequneces or the fiber 
homotopty exact sequence) weak fibrations are as good as fibrations.} see Theorem \ref{T:strong}.

\begin{thm}
   Let $p \: \X \to \Y$ be a morphism of topological stacks.
   If $p$ is   locally a  weak Hurewicz fibration then it is a
   weak Serre fibration.
\end{thm}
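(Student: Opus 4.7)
The plan is to verify the weak Serre lifting property directly by the standard subdivision technique, upgraded to account for the stack setting and the "weak" flavor of the local hypothesis. Given a homotopy $H \: I^n \x I \to \Y$ together with an initial lift $\tilde h_0 \: I^n \to \X$ of $H_0 := H|_{I^n \x\{0\}}$, I want to produce a map $\tilde H \: I^n \x I \to \X$ which lifts $H$ up to the appropriate notion of homotopy and extends $\tilde h_0$.

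First I would unpack the local hypothesis: there exists an open cover $\{U_i\}$ of $\Y$ (by substacks, in the stack sense) such that each restriction $p^{-1}(U_i) \to U_i$ is a weak Hurewicz fibration. Pulling this cover back along $H$ gives an open cover of the compact space $I^n \x I$. By the Lebesgue number lemma, I can choose an integer $N$ and subdivide $I^n \x I$ into subcubes $C_\al$ of side length $1/N$, each with the property that $H(C_\al) \subset U_{i(\al)}$ for some index $i(\al)$.

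Next I would lift inductively, enumerating the subcubes $C_\al$ in an order that sweeps upward from $I^n \x\{0\}$ (e.g. lexicographically in the "time" direction first), so that at each stage the already-lifted portion of $\partial C_\al$ forms a "bottom-plus-lateral-collar" pattern of the form $D \x \{0\} \cup \partial D \x I$ for a small $n$-cube $D$. For such a cube the weak Hurewicz property of $p^{-1}(U_{i(\al)}) \to U_{i(\al)}$ produces a lift on $C_\al$ extending the partial lift, because this pattern is homotopy-equivalent to $D \x \{0\}$ and the weak CHP reduces to lifting from the base. Concatenating the lifts over all $C_\al$ yields a continuous map $\tilde H \: I^n \x I \to \X$ with $\tilde H_0 = \tilde h_0$.

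The main obstacle, and the reason for the word \emph{weak} in the conclusion, is that the local Hurewicz lifts only satisfy $p \circ \tilde H \simeq H$ rel domain up to a vertical homotopy on each $C_\al$, so the global $\tilde H$ will not strictly cover $H$ but only cover a homotopic map. I would dispose of this by assembling the correction homotopies on the individual $C_\al$ into a global homotopy $p \circ \tilde H \simeq H$ rel $I^n \x \{0\}$, using that the vertical homotopies are supported in the respective $U_{i(\al)}$ and can be glued compatibly on shared faces (after possibly one further subdivision of the grid to guarantee that pairs of adjacent corrections still take values in a common $U_i$). This is exactly the data required for $p$ to be a weak Serre fibration, completing the proof.
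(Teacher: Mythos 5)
Your plan is the classical subdivision-and-patching argument, and its central step is precisely the one that breaks down for stacks: after lifting over the individual subcubes $C_\al$ you propose to ``concatenate the lifts'' into a map $\tilde H \: I^n \x I \to \X$. But the subcubes meet along \emph{closed} faces, and in the 2-category of topological stacks one cannot in general glue morphisms along closed subsets: descent holds only for open covers, two lifts on adjacent cubes will agree on the common face only up to a 2-isomorphism (in fact, see below, only up to a fiberwise homotopy), and there is no canonical glued map. This is exactly the failure flagged in the introduction as the reason the usual cell-by-cell lifting arguments do not carry over; the available substitutes (Proposition \ref{P:pushout1}, which requires $\X$ Hurewicz and cofibration inclusions, or Proposition \ref{P:pushout2} and Lemma \ref{L:glue}, which produce maps out of modified domains such as $B\bar{\vee}_A C$ or reparametrized intervals) do not yield a genuine lift defined on $I^n\x I$ assembled face by face. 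The same objection applies to your last step, where the correction homotopies are to be ``glued compatibly on shared faces.''

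There is a second gap, independent of the stackiness: the local hypothesis is only a \emph{weak} Hurewicz fibration, so on each $C_\al$ the WCHP produces a lift that agrees with the prescribed bottom-plus-collar data and covers $H|_{C_\al}$ only up to fiberwise homotopy, not on the nose. Hence the inductive structure does not close up: the data you feed into the next cube no longer lies over the right map, and adjacent lifts disagree on shared faces even before any gluing issue arises. Making this work over a paracompact base is the genuinely nontrivial content of Dold's uniformization theorems, which the paper cites (Proposition \ref{P:uniformization}) rather than reproves; your sketch would in effect have to reprove them. (Also, the local hypothesis in Definition \ref{D:locally} is along an arbitrary epimorphism $\Y' \to \Y$, not an open cover of $\Y$ by substacks, though that point is repairable by pulling back along $H$.) The paper's route avoids both problems: it chooses a locally shrinkable classifying space $\varphi \: X \to \X$ (Theorem \ref{T:nice}), observes that $p\circ\varphi$ is representable and locally a weak Hurewicz fibration, reduces by base change to finite CW (hence paracompact) bases where Dold's theorem applies (Propositions \ref{P:uniformization} and \ref{P:strong2}), and then descends the weak Serre property from $p\circ\varphi$ to $p$ via Lemmas \ref{L:Dold2} and \ref{L:compose3}; no lift into a stack is ever built by patching along closed sets.
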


In $\S$\ref{S:Examples} we provide some general classes of examples of fibrations of stacks.
Throughout the paper, we also prove various results which can be used to produce new fibrations out
of old ones. This way we can produce plenty of examples of fibrations of topological stacks.

The results of this paper, 
though of more or less of standard nature, are not straightforward. The difficulty
being that colimits are not  well-behaved in the 2-category of topological stacks; for instance, gluing
continuous maps along closed subsets is not always possible.
For this reason, the usual methods for proving lifting properties
(which involve building up continuous maps by
attaching cells or extending maps from smaller subsets by inductive steps)  
often can not be applied to stacks. 

The main technical input which allows us to circumvent this difficulty  
is the theory of classifying spaces for topological stacks developed in
\cite{Homotopytypes} combined with Dold's results on fibrations \cite{Dold}.

To reduce the burden of terminology we have opted to  state the results of this paper only for 
topological stacks. There is, however, a more general class of stacks, called paratopological
stacks, to which these results can be generalized -- the proofs will be identical. 

Paratopological stacks have a major advantage over topological stacks: they accommodate a
larger class of mapping stacks (see \cite{Mapping}, Theorem 1.1), while essentially enjoying all the 
important features of topological stacks. This is especially important as mapping stacks provide
a wealth of examples of fibrations ($\S$\ref{SS:mappingfiberations}). The reader can consult
\cite{Homotopytypes} and \cite{Mapping} for more on paratopological stacks.

 \section{Review of homotopy theory of topological stacks}{\label{S:Homotopy}}

In this section, we  recall some  basic facts and definitions from  
\cite{Foundations} and \cite{Homotopytypes}. For a quick introduction to topological stacks the 
reader may also consult \cite{Rapid}. Our terminology is different from that of
\cite{Foundations} in that what is called a {\em pretopological} stack in [ibid.] is called a
{\em topological} stack here.

 \subsection{Topological stacks}{\label{SS:topological}}

To fix our theory of topological stacks, the first thing to do is to fix the base Grothendieck
site $\sfT$. The main two candidates are the category $\Top$ of all topological spaces (with the
open-cover topology) and the category $\CGTop$ of  compactly generated (Hausdorff) 
topological spaces (with the open-cover topology). 
The latter behaves better for the purpose of having a theory of fibrations (as in
\cite{Whitehead}), so throughout the text our base Grothendieck site will be $\sfT:=\CGTop$.

By a {\em topological stack} we mean a  category $\X$ fibered in groupoids
over $\sfT$ which is equivalent to the quotient stack of a topological groupoid
$\bbX=[R\sst{}X]$, with $R$ and $X$  
topological spaces. The reader who is not comfortable with fibered categories may pretend that
$\X$ is a presheaf of groupoids over $\sfT$.

Topological stacks form a 2-category which is closed under 2-fiber products. 
For every two topological stacks $\X$ and
$\Y$, a {\em morphism} $f \: \X \to \Y$ between them is simply a morphism of the
underlying fibered categories (or underlying
presheaves of groupoids, if you wish). Given morphisms  $f,g \: \X \to \Y$, a 
{\em 2-morphism} $\varphi \: f \twomor{} g$ is a natural transformation relative to $\sfT$.
If $h \: \Y \to \Z$ is another morphism of stacks, we denote the induced 2-morphism
$h\circ  f \twomor{} h\circ g$ by $h\circ\varphi$ or $h\varphi$.
We use the multiplicative notation  $\varphi\psi$ for the composition of 2-morphisms
$\varphi \: f \twomor{} g$ and $\psi \: g \twomor{} h$. 

Via Yoneda we identify $\sfT$ with a full subcategory of the 2-category of topological stacks.
A morphism $f \: \X \to \Y$ of topological stacks is called {\em representable} if for every
morphism $Y \to \Y$ from a topological space $Y$, the fiber product $Y\times_{\Y}\X$ is
(equivalent to) a topological space. It turns out that, for every topological stack $\X$,
every morphism $f \: X \to \X$, with $X$ a topological space, is representable.
 
 \begin{defn}{\label{D:HurSerre1}}
   A topological stack $\X$ is {\bf Hurewicz} 
  (respectively, {\bf Serre}) if it admits a presentation $\bbX=[R\sst{}X]$ 
  by a topological groupoid in which the source (hence also the target) map 
  $s \: R \to X$ is  locally (on source and
  target) a  Hurewicz (respectively, Serre) fibration.
  That is, for every $a \in R$, there exists an open
  neighborhood $U\subseteq R$ of $a$  and $V\subseteq X$ of $f(a)$ such
  that the restriction of $s|_U \: U \to V$ is a Hurewicz (respectively, Serre)  fibration.
 \end{defn}
 
Hurewicz (respectively, Serre) topological stacks from a full sub 2-category of the 2-category
of topological stacks which is closed under 2-fiber products and contains the category of
topological spaces.
 
 \begin{defn}{\label{D:HurSerre2}}
   Let $f \: \X \to \Y$  be a morphism of topological stacks. We say that $f$ is {\bf Hurewicz} 
   (respectively, {\bf Serre}) if for every map $T \to \Y$ from a topological space $T$, the fiber
   product $T\times_{\Y}\X$ is a Hurewicz (respectively, Serre) topological stack
   (Definition \ref{D:HurSerre1}).
 \end{defn}

 \begin{lem}{\label{L:HurSerre1}}
    Every representable morphism of topological stacks is Hurewicz and every
    Hurewicz morphism of topological stacks is Serre.
 \end{lem}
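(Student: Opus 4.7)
The plan is to verify the two implications separately by unwinding the definitions; both are direct once one observes that a topological space, viewed as a stack, carries the trivial groupoid presentation and that a Hurewicz fibration of spaces is a Serre fibration.

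For the first assertion, I would start with a representable morphism $f \: \X \to \Y$ and an arbitrary test map $T \to \Y$ from a topological space. By definition of representability, the fiber product $Z := T \times_{\Y} \X$ is equivalent to a topological space. It then suffices to show that every topological space is a Hurewicz stack in the sense of Definition \ref{D:HurSerre1}. For this I would simply take the trivial groupoid presentation $[Z \sst{} Z]$ whose source map is $\id_Z$, which is (globally, hence locally) a Hurewicz fibration. Applying Definition \ref{D:HurSerre2} to the resulting $Z$ gives that $f$ is Hurewicz.

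For the second assertion, I would let $f \: \X \to \Y$ be Hurewicz and take an arbitrary $T \to \Y$. By hypothesis, $T \times_{\Y} \X$ admits a presentation $[R \sst{} X]$ whose source map $s \: R \to X$ is locally (on source and target) a Hurewicz fibration. The key input is the classical fact that every Hurewicz fibration of compactly generated Hausdorff spaces is a Serre fibration; this is the reason the base site $\sfT = \CGTop$ was fixed in $\S$\ref{SS:topological}. Applying this pointwise to each local restriction $s|_U \: U \to V$ shows that $s$ is also locally a Serre fibration, so $T \times_{\Y} \X$ is a Serre topological stack. Invoking Definition \ref{D:HurSerre2} again yields that $f$ is Serre.

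Neither part involves a genuine obstacle; the whole lemma is essentially a formal consequence of the definitions plus the single classical comparison between Hurewicz and Serre fibrations of topological spaces. The only point I would be careful about is ensuring that the groupoid $[Z \sst{} Z]$ really qualifies as a presentation in the sense of $\S$\ref{SS:topological} and that taking fiber products commutes appropriately with the choice of presentation, but these are built into the setup already recalled in the review section.
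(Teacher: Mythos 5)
Your unwinding is exactly the intended argument: the paper's proof is simply "Trivial," and your two steps (a space with its trivial presentation $[Z\rightrightarrows Z]$ is a Hurewicz stack, and a Hurewicz fibration is in particular a Serre fibration, so any presentation witnessing the Hurewicz condition also witnesses the Serre condition) are the triviality being invoked. The proposal is correct and matches the paper's approach.
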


 \begin{proof}
  Trivial.
 \end{proof}

 \begin{lem}{\label{L:HurSerre2}}
    Let $f \: \X \to \Y$ be a morphism of topological stacks. If $\X$ is 
    Hurewicz (respectively, Serre)  then $f$ is Hurewicz (respectively, Serre).
 \end{lem}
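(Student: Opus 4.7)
The plan is to unwind Definition \ref{D:HurSerre2}: for every map $T \to \Y$ from a topological space $T$, I must produce a presentation of the pullback $\Z := T \times_\Y \X$ whose source map is locally a Hurewicz (respectively, Serre) fibration. Starting from a presentation $\bbX = [R \sst{} X]$ of $\X$ with $s \: R \to X$ locally a Hurewicz (respectively, Serre) fibration, I will produce the desired presentation of $\Z$ by straight base change.

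First, set $X' := T \times_\Y X$, where $X \to \Y$ is the composition $X \to \X \to \Y$, and set $R' := T \times_\Y R$, where $R \to \Y$ is similarly the composition. Standard manipulation of 2-fiber products shows that $X' \to \Z$ is an atlas and that the resulting relations groupoid is $X' \times_{\Z} X' = R'$. The two structure maps of $[R' \sst{} X']$ are obtained by pulling back $s$ and $t$ along $T \to \Y$; in particular, there is a pullback square
\[
  \xymatrix{
    R' \ar[r] \ar[d]_{s'} & R \ar[d]^{s} \\
    X' \ar[r] & X
  }
\]
in $\sfT = \CGTop$, exhibiting $s'$ as the base change of $s$ along $X' \to X$.

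Now the proof reduces to the classical pullback-stability of (local) Hurewicz and Serre fibrations. Given $a' \in R'$ with image $a \in R$, choose by hypothesis an open neighborhood $U \subseteq R$ of $a$ and an open neighborhood $V \subseteq X$ of $s(a)$ with $s(U) \subseteq V$ and $s|_U \: U \to V$ a Hurewicz (respectively, Serre) fibration. Let $U' \subseteq R'$ and $V' \subseteq X'$ be the preimages of $U$ and $V$; these are open, contain $a'$ and $s'(a')$ respectively, satisfy $s'(U') \subseteq V'$, and $s'|_{U'} \: U' \to V'$ is the pullback of $s|_U \: U \to V$ along $V' \to V$. Since both Hurewicz and Serre fibrations are stable under pullback in $\CGTop$, $s'|_{U'}$ is a Hurewicz (respectively, Serre) fibration. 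Hence $s'$ is locally a Hurewicz (respectively, Serre) fibration, and by Definition \ref{D:HurSerre1} the stack $\Z = T \times_\Y \X$ is Hurewicz (respectively, Serre). This being so for every $T \to \Y$, Definition \ref{D:HurSerre2} yields that $f$ is Hurewicz (respectively, Serre).

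I do not anticipate any real obstacle; the only points that require a line of justification are the identification of the pullback groupoid presentation of $\Z$ and the pullback-stability of Hurewicz fibrations (which is why it was arranged in \S\ref{SS:topological} to work in $\CGTop$ rather than $\Top$).
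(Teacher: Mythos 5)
Your proof is correct and follows the same route as the paper's: pull back the groupoid presentation $[R\sst{}X]$ along $T\to\Y$ to get a presentation $[R_T\sst{}X_T]$ of $T\times_\Y\X$, and observe that the source map of the pulled-back groupoid is locally a Hurewicz (respectively, Serre) fibration because such fibrations are stable under base change; you have merely spelled out the details the paper leaves implicit. (The only inessential quibble is your closing aside: pullback-stability of fibrations holds in $\Top$ as well, so it is not the reason for working in $\CGTop$.)
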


 \begin{proof}
   Let $X \to \X$ be a chart for $\X$ such that the corresponding groupoid
   presentation $[R\sst{} X]$, $R=X\times_{\X} X$, satisfies the condition of Definition 
   \ref{D:HurSerre1}. Then, for every topological space $T$ mapping to $\Y$,
   the pullback groupoid  $[R_T\sst{} X_T]$ is
   a presentation for $\X_T=T\times_{\Y}\X$ which satisfies the condition of Definition 
   \ref{D:HurSerre1}. Here, $X_T=T\times_{\Y}X$ and $R_T=T\times_{\Y}R$.
 \end{proof}

\subsection{Homotopy between maps}{\label{SS:homotopy}}

 \begin{defn}{\label{D:relhtpy}}
   Let $p \: \X \to \Y$ and $f,g \: \A \to \X$ be morphisms of topological stacks. Let
   $\varphi \: p\circ f \twomor{} p\circ g$ be a 2-morphism
        $$\xymatrix@R=28pt@C=44pt@M=5pt{ & \X \ar[d]^p \\
           \A \ars{@/^/}{[r]}{1ex}{0.5}  ^(0.6){ p\circ f}
              \art{@/_/}{[r]}{-1ex}{0.5}   _(0.6){ p\circ g}
                \ar[ur]^{f,g}         
             \ar    @{=>}_{\varphi} "s";"t"   
              &  \Y  }$$     
   A {\bf homotopy} from $f$ to $g$ relative to $\varphi$ is a quadruple
   $(H,\ep_0,\ep_1,\psi)$ as follows: 

  \begin{itemize}

    \item  A map  $H \: I\times \A\to \X$, where $I$ stands for the interval $[0,1]$.

    \item  A pair of 2-morphisms $\ep_0 \: f \twomor{} H_0$ and
           $\ep_1 \: H_1 \twomor{} g$.  Here $H_0$ and $H_1$ stand for
           the maps $\A \to \Y$ obtained
           by restricting $H$ to $\{0\}\x\A$ and $\{1\}\x\A$, respectively.
     \item A 2-morphism $\psi \: p\circ \tilde{f} \twomor{} p\circ H$ such that
     $\psi_0=p\circ\ep_0$ and $(\psi_1)(p\circ\ep_1)=\varphi$.  
     Here $\tilde{f} \: I\times \A \to \X$    stands for $f\circ\pr_2$. 
  \end{itemize}
    We usually drop $\ep_0$, $\ep_1$ and $\psi$ from the notation. A  {\bf 
    ghost homotopy}  from $f$ to $g$ is a  2-morphism $\Phi \: f \twomor{} g$
    such that $p\circ\Phi=\varphi$. Equivalently,
    this means that  $H$ can be chosen to be  (2-isomorphic to) a constant homotopy
   (namely, one factoring through the projection $\pr_2\: I\times\A \to \A$).
 \end{defn}

The usual notion of homotopy (and ghost homotopy) between maps $f, g \: \A \to \X$
corresponds to the case where $\Y$ is a point. Note that in this case the
2-morphisms $\varphi$ and $\psi$ are necessarily the identity 2-morphisms.

As discussed in \cite{Foundations}, $\S$17 (also see $\S$\ref{SS:pushout} below), 
gluing homotopies can in general be problematics
unless we make certain fibrancy conditions on the target stack. There is, however, a way to glue
homotopies which is well-defined up to a homotopy of homotopies and works for arbitrary topological 
stacks. 

 \begin{lem}{\label{L:glue}}
   Let $p \: \X \to \Y$ be a morphism of topological stacks.
   Let $\A$ be a topological stack, and consider morphisms
   $f_1,f_2,f_3 \: \A \to \Y$,  and 2-morphisms
   $\varphi_{12} \: p\circ f_1 \twomor{} p\circ f_2$ 
   and $\varphi_{23} \: p\circ f_2 \twomor{} p\circ f_3$. If $f_1$ is homotopic to
   $f_2$ relative to $\varphi_{12}$, and  $f_2$ is homotopic to
   $f_3$ relative to $\varphi_{23}$, then $f_1$ is homotopic to
   $f_3$ relative to $\varphi_{12}\varphi_{23}$. 
 \end{lem}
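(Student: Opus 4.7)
The plan is to concatenate the two input homotopies into a single homotopy on $I$. Write the given data as $(H,\ep_0^H,\ep_1^H,\psi^H)$ from $f_1$ to $f_2$ relative to $\varphi_{12}$, and $(K,\ep_0^K,\ep_1^K,\psi^K)$ from $f_2$ to $f_3$ relative to $\varphi_{23}$. The natural candidate for the concatenated homotopy is the map $L\: I\x\A\to\X$ obtained by running $H$ at double speed on $[0,1/2]\x\A$ and $K$ at double speed on $[1/2,1]\x\A$. The obvious obstruction is that the two halves only agree at the splicing time $t=1/2$ up to the composite 2-isomorphism $\ep_1^H\ep_0^K\: H_1\twomor{}K_0$ factoring through $f_2$, and, as the introduction flags, gluing stack-valued maps along closed subsets is not automatic.

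I would circumvent this by passing to a groupoid presentation $\bbX=[R\sst{}X]$ of $\X$: lift each half of the homotopy to maps into $X$ over a cover of $\{1/2\}\x\A$, interpret the 2-isomorphism $\ep_1^H\ep_0^K$ as a continuous section of $R$ over $\{1/2\}\x\A$, and use this section to conjugate one of the two lifts so that the two boundary maps to $X$ coincide. At that point the gluing is a gluing of maps into a topological space, which works in $\mathsf{CGTop}$, and descends to a morphism $L\: I\x\A\to\X$ together with canonical 2-isomorphisms on each half identifying $L$ with the reparametrized versions of $H$ and $K$. This realizes the remark in the introduction that gluing of homotopies works for arbitrary topological stacks up to a homotopy of homotopies.

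With $L$ in hand, the rest is a pasting exercise. Take $\ep_0^L$ to be $\ep_0^H$ post-composed with the canonical 2-iso $L_0\simeq H_0$, and $\ep_1^L$ the analogue obtained from $\ep_1^K$ and $K_1\simeq L_1$. The 2-morphism $\psi^L$ is the pasting of the reparametrizations of $\psi^H$ and $\psi^K$; the two agree at $t=1/2$ because $p$ applied to the regluing 2-iso factors as $(p\circ\ep_1^H)(p\circ\ep_0^K)$, which by the endpoint conditions on $\psi^H$ and $\psi^K$ is compatible with the pasting. The endpoint identities $\psi^L_0=p\circ\ep_0^L$ and $(\psi^L_1)(p\circ\ep_1^L)=\varphi_{12}\varphi_{23}$ then drop out from the analogous identities for $\psi^H$ and $\psi^K$. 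The one real obstacle is the chart-level regluing in the second paragraph -- making the 2-iso at $t=1/2$ strict enough to glue the underlying maps; the assembly of 2-morphism data is then routine.
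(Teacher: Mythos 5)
Your reduction of everything to the strictification step at $t=1/2$ is exactly where the argument breaks, and the step you defer as ``routine chart-level regluing'' is in fact the known obstruction this lemma is designed to route around. Concretely: a map into $\X$ lifts to the atlas $X$ of a presentation $[R\sst{}X]$ only locally (over an open cover of an atlas of $\A$ as well, since $\A$ is a stack, not a space), so the 2-isomorphism $\ep_1^H\ep_0^K$ is not a single section of $R$ over $\{1/2\}\x\A$ but a family of local sections with transition data. More importantly, even granting such data, ``conjugating one of the two lifts so the boundary maps coincide'' requires extending the $R$-valued comparison from the slice $\{1/2\}\x\A$ to a neighborhood in $[1/2,1]\x\A$ (equivalently, matching local trivializations of the two pulled-back $R$-bundles across the slice). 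That is an extension/lifting problem along $s\:R\to X$, and it is solvable in general only under a fibrancy hypothesis on the groupoid (the Hurewicz/Serre condition of Definition \ref{D:HurSerre1}) or under the cofibrancy-plus-fibrancy hypotheses of Proposition \ref{P:pushout1}. For an arbitrary topological stack it can fail: this is precisely the failure of gluing along closed subsets that the paper flags, so your scheme does not prove the lemma in the stated generality; it would only recover the case of Hurewicz stacks, where the closed gluing was already available.

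The paper's proof avoids closed gluing altogether. Reparametrize $H$ so that it runs at triple speed on $[0,1/3]$ and is constant (equal to $H_1$) on $[1/3,2/3)$, giving $\tilde H_{12}$ on the open set $[0,2/3)\x\A$; symmetrically reparametrize $K$ to get $\tilde H_{23}$ on $(1/3,1]\x\A$, constant on $(1/3,2/3]$. On the \emph{open} overlap $(1/3,2/3)\x\A$ both maps are constant in $t$ and are compared by the pullback of $\ep_1^H\ep_0^K$, so ordinary descent for the two-element open cover (where no cocycle condition intervenes) produces the glued map $L\:I\x\A\to\X$; the endpoint 2-morphisms and the assembly of $\psi$ then go through as you describe. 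If you want to salvage your outline, replace the strictification at the slice by this ``stretch until constant near the splice'' device, so that the gluing happens along an open overlap where descent is available for every topological stack.
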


 \begin{proof}
  Let $(H_{12},\ep_0,\ep_1,\psi)$
  be a homotopy from $f_1$ to $f_2$ relative to $\varphi_{12}$. Similarly, let
  $(H_{23},\delta_0,\delta_1,\chi)$ be
  a homotopy from $f_2$ to $f_3$ relative to $\varphi_{23}$.
  We construct a  homotopy $H_{13}$ from $f_1$ to $f_3$ relative to 
  $\varphi_{12}\varphi_{23}$. 
  The point of the following rather unusual construction  is that gluing maps along closed subsets 
  may not be possible, but we can always glue maps along open subsets. 
   
  Define $\tilde{H}_{12} \:  [0,2/3)\times \A\to \X$ by 
   $$\tilde{H}_{12}(t,a) =\left\{
      \begin{array}{ll}
     H_{12}(3t,a), & t \leq 1/3 \\
      H_{12}(1,a), & 1/3 \leq t < 2/3
   \end{array} \right.$$
  More precisely, $\tilde{H}_{12}=H_{12}\circ R$, where $R=(r,\id_{\A}) \: 
  [0,2/3)\times \A \to  I\times \A$ is defined by
       $$r(t) =\left\{
      \begin{array}{ll}
     3t, & t \leq 1/3 \\
      1, & 1/3 \leq t < 2/3
   \end{array}\right.$$
  Similarly, we define  $\tilde{H}_{23} \:  (1/3,1]\times \A\to \X$ by
   $$\tilde{H}_{23}(t,a) =\left\{
      \begin{array}{ll}
     H_{23}(0,a), & 1/3 < t \leq 2/3 \\
      H_{23}(3t-2,a), & 2/3 \leq t \leq 1
   \end{array}\right.$$
  We glue $\tilde{H}_{12}$ to $\tilde{H}_{23}$  along the open set 
  $(1/3,2/3)\times \A$ using the 2-isomorphism 
   $$\Xi\: 
  \tilde{H}_{12}|_{(1/3,2/3)} \twomor{}  \tilde{H}_{23}|_{(1/3,2/3)}, \ \ 
  \Xi=\tilde{\ep}_1\tilde{\delta}_0,$$ 
  where $\tilde{\ep}_1=\ep_1\circ \pr_2$, with
  $\pr_2 \: (1/3,2/3)\times \A \to \A$ being the second projection  
  ($\tilde{\delta}_0$ is defined similarly). Denote the  resulting glued map by
  $H \: [0,1]\times \A \to \X$. The quadruple
  $(H,\ep_0,\delta_1,\psi)$ provides the desired homotopy from $f_1$ to
  $f_3$ relative to $\varphi_{12}\varphi_{23}$.  
  We leave it to the reader to verify that the axioms of Definition \ref{D:relhtpy} are satisfied.
 \end{proof}

\subsection{Pushouts in the category of topological stacks}{\label{SS:pushout}}

A  downside of the 2-category of topological stacks is that in it pushouts are not 
well-behaved. For example, let $T$ be a topological space which is a union of two
subspaces $B$ and $C$, and let $A = B\cap C$. If $B$ and $C$ are open, then 
for any stack $\X$, any two morphisms $B \to \X$ and $C \to \X$ which agree along $A$
can be glued to a morphism from $T$ to $\X$ (this is simply the descent condition). If, however,
$B$ and $C$ are not open, one may not, in general, be able to glue such overlapping morphisms.
As we will see below in Proposition \ref{P:pushout1}, this is partly remedied if we 
impose a cofibrancy condition
on the inclusions $A \subset B$ and $A \subset C$ and  a fibrancy condition
on $\X$.

 \begin{prop}{\label{P:pushout1}}
  Let  $i \: A \hookrightarrow B$ and $j \: A \hookrightarrow C$ be
  embeddings of topological spaces which are (locally)  cofibrations. Then, the pushout
  $B\vee_A C$ remains a pushout in the 2-category of
  Hurewicz topological stacks (Definition \ref{D:HurSerre1}). That is,
  for every Hurewicz topological stack $\X$, the diagram
   $$\xymatrix@M=6pt{  \Hom(B\vee_A C,\X) \ar[r] \ar[d] &   \Hom(C,\X) \ar[d] \\
      \Hom(B,\X) \ar[r] &  \Hom(A,\X)}$$
  is 2-cartesian.  (The arrows in the diagram are the obvious restriction maps.)
 \end{prop}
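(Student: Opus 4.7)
The plan is to verify that the restriction functor
$$\Phi\: \Hom(B\vee_A C,\X) \lra \Hom(B,\X)\times_{\Hom(A,\X)}\Hom(C,\X)$$
(into the $2$-fiber product of groupoids) is an equivalence of groupoids, by establishing essential surjectivity (gluing a triple $(f,g,\alpha)$ into a single morphism $h$) and full faithfulness (gluing a compatible pair of $2$-morphisms into a unique $2$-morphism).

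First, I would fix a presentation $\X=[R\sst{}X]$ with $s\: R\to X$ locally a Hurewicz fibration, as furnished by Definition \ref{D:HurSerre1}. Under this presentation, a morphism $T\to \X$ from a topological space $T$ is classified by a principal $[R\sst{}X]$-torsor $P\to T$ together with an equivariant map $\phi\: P\to X$, and a $2$-morphism corresponds to an isomorphism of torsors over the identity intertwining the $\phi$'s. Thus a triple $(f,g,\alpha)$ in the $2$-fiber product yields torsors $P_B\to B$ and $P_C\to C$ with equivariant maps to $X$, together with a torsor isomorphism $\tilde\alpha\: P_B|_A \risom P_C|_A$ compatible with those maps. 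Since $A\hra B$ and $A\hra C$ are (locally) cofibrations and the $P$'s are locally trivial, the induced inclusions $P_B|_A\hra P_B$ and $P_C|_A\hra P_C$ remain (locally) cofibrations, so the topological pushout $P:=P_B\cup_{\tilde\alpha}P_C$ exists in $\sfT$ and sits naturally over $B\vee_A C$. The groupoid action on $P$ and the equivariant map $P\to X$ are then obtained by gluing, using compatibility of $\tilde\alpha$ with the actions and with the $\phi$'s.

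The nontrivial point is to verify that $P\to B\vee_A C$ is a locally trivial $[R\sst{}X]$-torsor. Away from $A$ this is inherited from $P_B$ and $P_C$, so the issue concentrates at points $a\in A$. Here I would exploit the NDR structure coming from the cofibrations $A\hra B$ and $A\hra C$ to produce a neighborhood $W$ of $a$ in $B\vee_A C$ that deformation retracts onto a neighborhood of $a$ in $A$, and then use the local Hurewicz fibration hypothesis on $s\: R\to X$ to lift a local trivialization of $P|_A$ near $a$ across this deformation to a local trivialization of $P$ over $W$. This step---extending trivializations across the ``seam'' $A$---is the main obstacle and is precisely where both hypotheses of the proposition are essential. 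With essential surjectivity in hand, full faithfulness follows by the same gluing principle applied to torsor isomorphisms: a pair $(\beta_B,\beta_C)$ of $2$-morphisms with matching restrictions to $A$ corresponds to torsor isomorphisms that agree on $P_h|_A \risom P_{h'}|_A$, and these glue along the cofibration to a unique torsor isomorphism $\beta\: P_h\risom P_{h'}$, i.e., the unique $2$-morphism between the glued maps.
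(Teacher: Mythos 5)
The paper's own ``proof'' is a one-line citation of [No1, Theorem~16.2], and what you are attempting is essentially to reprove that cited theorem: present $\X$ by a groupoid $[R\sst{}X]$ whose source map is locally a Hurewicz fibration, describe maps to $\X$ by groupoid torsors, glue the torsors over $B\vee_A C$, and use the NDR structure of the cofibrations together with the local fibration property of $s$ at the seam. That is the right route. But the step you yourself flag as ``the main obstacle'' is only a plan, and the plan as phrased does not identify the actual difficulty. Two preliminary inaccuracies: torsors for a topological groupoid are not locally trivial, they only admit local sections (a section does trivialize the torsor over its domain via the division map, but your blanket claim of local triviality, and the ensuing claim that $P_B|_A\hra P_B$ and $P_C|_A\hra P_C$ are again cofibrations, are unjustified --- and also unnecessary, since the pushout $P=P_B\cup_{\tilde\alpha}P_C$ exists and contains both pieces as closed subspaces without any such hypothesis).

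The genuine gap is the verification that $P\to B\vee_A C$ admits local sections near a seam point $a\in A$. Local sections $\sigma_B$ and $\sigma_C$ on the two sides exist with no hypotheses at all; the problem is that a section of $P$ over a neighborhood of $a$ in the pushout (which carries the quotient topology) is exactly a \emph{compatible} pair, and $\sigma_B,\sigma_C$ need not agree over $A$. Writing $\sigma_C|_{U_A}=\sigma_B|_{U_A}\cdot\tau$ for the unique transition map $\tau\colon U_A\to R$, what must be shown is that $\tau$ extends to a neighborhood of $a$ in $C$ as a lift along $s\colon R\to X$ of the anchor of $\sigma_C$, agreeing with $\tau$ on $A$; one then corrects $\sigma_C$ by the (inverse of the) extended transition map. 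This is a section-extension problem for the pullback of $s$, solved by the homotopy lifting extension property of a Hurewicz fibration with respect to the cofibration $A\cap U_C\hra U_C$ (using the NDR deformation of a neighborhood into $A$), together with the bookkeeping forced by the fact that $s$ is only a fibration on small opens of $R$, so that $U_A$, $U_C$ and the deformation must be shrunk and reparametrized to stay in such an open. Your formulation --- ``lift a local trivialization of $P|_A$ near $a$ across this deformation to a local trivialization of $P$ over $W$'' --- never names this lifting problem, so the essential-surjectivity step, which is the entire content of the proposition, remains unproved. (The full-faithfulness half is essentially fine, though your ``glue along the cofibration'' should be justified by noting that continuity of the glued equivariant map can be checked along local sections of $P_h$, where it follows from the pasting lemma because the images of $B$ and $C$ in $B\vee_A C$ are closed.)
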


 \begin{proof}
  Follows from  (\cite{Foundations}, Theorem 16.2).
 \end{proof}

Proposition \ref{P:pushout1} may fail if one of the maps $i$ or
$j$ is not an embedding, or if $\X$ is an arbitrary topological stack. 
For instance, even when $(Y,A)$ is a nice pair
(say an inclusion of a finite CW complex into another), the quotient
space $Y/A$ may not in general have the universal property of a
quotient space when viewed in the category of (Hurewicz) topological stacks.

Things, however, work much better up to homotopy, as we see in Proposition \ref{P:pushout2} below. 
To state the proposition, we make the following definition.
Let  $f \: A \to B$ and $g \: A \to C$ be continuous
maps of topological spaces. We define $B\bar{\vee}_A C$ to be 
  $$B\bar{\vee}_A C:=B\vee_{A\times\{0\}}(A\times I)\vee_{A\times\{1\}}C.$$
In other words,  $B\bar{\vee}_A C$ is obtained by 
gluing the two ends of the tube $A\times I$ to $B$ and $C$
using $f$ and $g$. For a topological stack $\X$, we define the restricted hom
$\Hom^*(B\bar{\vee}_A C,\X)$ via the following 2-fiber product diagram
      $$\xymatrix@M=6pt{  \Hom^*(B\bar{\vee}_A C,\X) \ar[r] \ar[d] &   
         \Hom(B\bar{\vee}_A C,\X) \ar[d]^r \\
               \Hom(A,\X) \ar[r]_c & \Hom(A\times I,\X)   }$$ 
Here, $r$ is induced by the inclusion $A\times I \hookrightarrow B\bar{\vee}_A C$, and 
$c$ is induced by the projection $A\times I \to A$. In simple terms,
the restricted hom   consists of maps $B\bar{\vee}_A C \to \X$ which are
constant along the tube $A\times I$. If $\X=X$ is an honest topological space,
then $\Hom^*(B\bar{\vee}_A C,X)=\Hom(B\vee_A C,X)$.

 \begin{prop}{\label{P:pushout2}}
   Let  $f \: A \to B$ and $g \: A \to C$
   be continuous maps of topological spaces. Then,
   for every topological stack $\X$ the diagram
     $$\xymatrix@M=6pt{  \Hom^*(B\bar{\vee}_A C,\X) \ar[r] \ar[d] &   \Hom(C,\X) \ar[d] \\
        \Hom(B,\X) \ar[r] &  \Hom(A,\X)}$$
  of groupoids is 2-cartesian. (The arrows in the diagram are the obvious restriction maps.)
 \end{prop}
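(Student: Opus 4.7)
The plan is to use descent of stack morphisms along an open cover of $B\bar{\vee}_A C$. I would cover it by the two open subsets
$$U := B \cup (A \times [0,2/3)), \qquad V := (A \times (1/3,1]) \cup C,$$
whose intersection is $U \cap V = A \times (1/3,2/3)$. The virtue of this open cover---over the description of $B\bar{\vee}_A C$ as a pushout along closed inclusions---is that gluing morphisms of stacks along open covers is automatic, with no cofibrancy or fibrancy hypothesis needed on $\X$; this is essential, since the statement is claimed for an arbitrary topological stack.

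The comparison functor $\Phi$ from $\Hom^*(B\bar{\vee}_A C,\X)$ to the 2-fiber product is straightforward: a triple $(H,w,\eta)$ with $\eta \: H|_{A\times I} \twomor{} w\circ\pr_1$ is sent to $(H|_B,\, H|_C,\, \alpha)$ where $\alpha := \eta_0\,\eta_1^{-1}$, and $\eta_0$, $\eta_1$ denote the restrictions of $\eta$ to $A\times\{0\}$ and $A\times\{1\}$; this produces the required 2-morphism $H|_B\circ f \twomor{} H|_C\circ g$.

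For the quasi-inverse, given $(u,v,\alpha)$, I would use the continuous retractions $\pi_U \: U \to B$ and $\pi_V \: V \to C$ that collapse the tube onto $B$ and $C$ via $f$ and $g$ respectively, and set $H|_U := u\circ\pi_U$ and $H|_V := v\circ\pi_V$. On the overlap $A\times(1/3,2/3)$ these read as $(u\circ f)\circ\pr_1$ and $(v\circ g)\circ\pr_1$, and the whiskered 2-isomorphism $\alpha\circ\pr_1$ provides the gluing datum; open-cover descent for stacks then assembles a morphism $H \: B\bar{\vee}_A C \to \X$. For the witness of constancy I take $w := u\circ f$ and glue the identity on $A\times[0,2/3)$ to $\alpha^{-1}\circ\pr_1$ on $A\times(1/3,1]$; these agree on the overlap since $(\alpha\circ\pr_1)(\alpha^{-1}\circ\pr_1) = \id$, producing the desired $\eta \: H|_{A\times I} \twomor{} w\circ\pr_1$.

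What remains is to verify that $\Phi$ and this inverse are mutually quasi-inverse, which is a direct unwinding of definitions using the uniqueness part of open-cover descent. The main obstacle I foresee is not conceptual but notational: one must track the restrictions of 2-morphisms to $A\times\{0\}$, $A\times\{1\}$, and the overlap, and verify their coherence. Since every 2-isomorphism in sight is either the identity or built from the single datum $\alpha$, and since the cover consists of only two pieces with empty triple intersection, these checks reduce to elementary identities in $\Hom(A,\X)$.
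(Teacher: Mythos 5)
Your proposal is correct and takes essentially the same route as the paper: cover $B\bar{\vee}_A C$ by two open sets obtained by attaching a half-open portion of the tube to $B$ and to $C$, collapse the tube onto $B$ and $C$ via $f\circ\pr_1$ and $g\circ\pr_1$, and glue along the open overlap using the whiskered 2-isomorphism $\alpha$ (the paper cuts at $A\times[0,1)$ and $A\times(0,1]$ rather than your $1/3$--$2/3$ cut, which is immaterial), your explicit construction of the constancy witness $\eta$ merely spelling out what the paper's sketch leaves implicit.
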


 \begin{proof}
  The proof of this proposition is very similar to the proof of Lemma \ref{L:glue}. We sketch the idea.
  We want to show that  two maps  $u \: B \to \X$ and  $v \: C \to \X$ which are
  identified by a 2-isomorphism along $A$ give rise to a map $w\: B\bar{\vee}_A C\to\X$
  which is constant along the tube $A\times I$.
  The map $u \: B \to \X$ gives rise to a map 
     $$\tilde{u} \: B\vee_{A\times\{0\}}(A\times [0,1)) \to \X,$$
  $\tilde{u}:=p\circ u$, where $p \: B\vee_{A\times\{0\}}(A\times [0,1)) \to B$ is the map
  which is the identity on $B$ and is  $f\circ\pr_1$ on $A\times [0,1)$. Similarly,
  $v \: C \to \X$ gives rise to a map 
      $$\tilde{v} \:  (A\times (0,1])\vee_{A\times\{1\}}C\to \X.$$
  The two maps $\tilde{u}$ and $\tilde{v}$ are equal along the open subset
  $A\times (0,1)$ of $B\bar{\vee}_A C$, so they glue to a map $w\: B\bar{\vee}_A C \to\X$.
 \end{proof}
 \subsection{Shrinkable morphisms of stacks}{\label{SS:shrinkable}}
 
 \begin{defn}{\label{D:retraction}}
   We say that a morphism  $f \: \X \to \Y$ of  stacks is
   {\bf shrinkable} if it admits a section $s \: \Y \to \X$ (meaning that $f\circ s$ is 2-isomorphic
   to $\id_{\Y}$ via some $\varphi \:  \id_{\Y} \Ra f\circ s$)
   such that there is  a homotopy from $\id_{\X}$ to $s\circ f$ relative to $\varphi\circ f$
   (Definition \ref{D:relhtpy}). 

   We say that $f$ is   {\bf locally shrinkable}
   if there is an epimorphism $\Y' \to \Y$ such
   that the base extension $f' \: \X' \to \Y'$ of $f$ to $\Y'$ is shrinkable.
 \end{defn}

In the case where $f \: X \to Y$ is a continuous map of topological spaces,
this definition coincides with the one in \cite{Dold}, $\S$1.5. In this case, $s$
identifies $Y$ with the closed subspace $s(Y)$ of $X$, and there is a
fiberwise strong deformation retraction of $X$ onto $s(Y)$. Note that, for
a general morphism of stacks $f \: \X \to \Y$, a section $s$ of $f$ may not be an embedding.
(For example, the map $[*/G] \to *$ has a section which is not an embedding,
namely, the projection $* \to [*/G]$. As another example, take the map 
$[\mathbb{R}/\mathbb{Q}] \to *$.)  All we can say in general
is that $s$ is representable.
 
 \begin{lem}{\label{L:locallyshrinkable1}}
  Let $f \: \X \to \Y$ be a  morphism of stacks and  $s \: \Y \to \X$ a section for it.
  Suppose that $f$ is shrinkable onto $s$.  Let $g \: \Y' \to \Y$ be an arbitrary morphism. 
  Then, the base extension $f' \: \X' \to \Y'$ of $f$ along $g$ is shrinkable onto
  $s'$.
          $$\xymatrix@M=6pt{\X' \ar[d]^{f'} \ar[r] & \X \ar[d]^f \\
              \Y' \ar[r]_{g}  \ar@/^/ @{..>} [u]^{s'} & \Y \ar@/^/ @{..>} [u]^{s}}$$ 
  where $s'$ is the section induced by $s$.     
  In particular, any base extension of a (locally) shrinkable morphism is (locally) shrinkable.
 \end{lem}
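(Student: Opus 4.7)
The plan is to apply the universal property of the 2-fibre product in an almost formal way, lifting both the section and the homotopy through the pullback. Write $\pi \: \X' \to \X$ for the second projection of $\X' = \Y'\x_{\Y}\X$, so that the defining square comes equipped with a canonical 2-isomorphism $\theta \: f\circ\pi \Ra g\circ f'$.

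First I construct $s'$. Applying the universal property of the 2-pullback to the pair of morphisms $\id_{\Y'} \: \Y' \to \Y'$ and $s\circ g \: \Y' \to \X$, together with the 2-morphism $\varphi\circ g \: g \Ra f\circ s\circ g$, yields a morphism $s' \: \Y' \to \X'$ equipped with a canonical 2-isomorphism $\varphi' \: \id_{\Y'} \Ra f'\circ s'$ (and $\pi\circ s' \cong s\circ g$). This $s'$ is the section named in the statement.

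Next I lift the given data $(H,\ep_0,\ep_1,\psi)$ to a homotopy on $\X'$. Consider the pair of morphisms $f'\circ\pr_2 \: I\x\X' \to \Y'$ and $H\circ(\id_I\x\pi) \: I\x\X' \to \X$. Pasting $(\theta\circ\pr_2)^{-1}$ with $\psi\circ(\id_I\x\pi)$ produces a 2-isomorphism $g\circ f'\circ\pr_2 \Ra f\circ H\circ(\id_I\x\pi)$, so the universal property of $\X'$ yields a morphism $H' \: I\x\X' \to \X'$. Restricting to $\{0\}\x\X'$ and $\{1\}\x\X'$ extracts the endpoint 2-morphisms $\ep_0'$ and $\ep_1'$, while the component of the universal 2-cell over $\Y'$ furnishes the relative-homotopy 2-morphism $\psi' \: f'\circ\pr_2 \Ra f'\circ H'$.

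Finally one must verify that $(H',\ep_0',\ep_1',\psi')$ satisfies the three identities of Definition \ref{D:relhtpy} relative to $\varphi'\circ f'$. This amounts to the uniqueness clause in the 2-pullback universal property combined with the fact that the analogous identities already hold for $(H,\ep_0,\ep_1,\psi)$, transported through $\pi$ and pasted with $\theta$. The locally shrinkable case then follows formally: given an epimorphism $h \: \Y'' \to \Y$ over which $f$ becomes shrinkable, the base change $\Y''\x_{\Y}\Y' \to \Y'$ is again an epimorphism, and applying the shrinkable case along $\Y''\x_{\Y}\Y' \to \Y''$ exhibits the base extension of $f'$ as shrinkable. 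The main obstacle is purely bookkeeping of 2-cells in the pullback; there is no topological content beyond what is already packaged in the universal property of $\X'$.
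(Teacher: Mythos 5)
Your argument is correct and is exactly the routine 2-pullback bookkeeping the paper has in mind when it dismisses this lemma with ``Easy'': induce $s'$ and lift the shrinking homotopy $H$ through the universal property of $\X'=\Y'\times_{\Y}\X$, with the endpoint 2-cells and the identities of Definition \ref{D:relhtpy} supplied by the uniqueness clause, and the locally shrinkable case following from stability of epimorphisms under base change. No discrepancy with the paper's (omitted) proof.
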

 
 \begin{proof}
  Easy.
 \end{proof} 

 \begin{lem}{\label{L:locallyshrinkable2}}
  A continuous map $f \: X \to Y$ of topological spaces is locally shrinkable if and only if 
  there is an open cover $\{U_i\}$ of $Y$ such
  that $f|_{U_i} \: f^{-1}(U_i) \to U_i$ is shrinkable for all $i$.
  A representable morphism $f \: \X \to \Y$ of stacks is locally shrinkable
  if for any map $T \to \Y$ from a topological space $T$, the base extension
  $f_T \: \X\times_{Y}T \to T$ is locally shrinkable. 
 \end{lem}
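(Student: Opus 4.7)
The plan is to handle the two assertions separately, with the second reduced to the first.

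For the first assertion (the if-and-only-if for maps of topological spaces), I would prove the two directions as follows. For the ``if'' direction, given an open cover $\{U_i\}$ of $Y$ with each $f|_{U_i} \: f^{-1}(U_i) \to U_i$ shrinkable, I would take $Y' := \coprod_i U_i$ equipped with the tautological map to $Y$. Since this map is a surjective local homeomorphism it is an epimorphism in the site $\sfT = \CGTop$, and the base extension of $f$ along it is $\coprod_i f|_{U_i} \: \coprod_i f^{-1}(U_i) \to \coprod_i U_i$. A disjoint union of shrinkable maps is readily seen to be shrinkable (sections and fiberwise strong deformation retractions assemble componentwise), completing this direction.

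For the ``only if'' direction, I would start from a witnessing epimorphism $g \: Y' \to Y$ with shrinkable base extension $f' \: X' \to Y'$. The key input is that epimorphisms in the open-cover topology admit local sections: there exist an open cover $\{U_i\}$ of $Y$ and continuous maps $\sigma_i \: U_i \to Y'$ with $g\circ\sigma_i$ equal to the inclusion $U_i\hookrightarrow Y$. Applying Lemma \ref{L:locallyshrinkable1} to each $\sigma_i$ shows that the pullback of $f'$ along $\sigma_i$ is shrinkable; via the canonical identification $X'\times_{Y'}U_i \cong X\times_Y U_i$, this pullback is precisely $f|_{U_i}$, which is therefore shrinkable.

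For the second assertion, I would choose an atlas $p \: T \to \Y$ from a topological space. Because $f$ is representable, the base extension $f_T \: X_T \to T$ is a map of topological spaces, and it is locally shrinkable by hypothesis. Applying the first part just proved, I obtain an open cover $\{U_i\}$ of $T$ with each $f_T|_{U_i}$ shrinkable. Setting $Y' := \coprod_i U_i$, the composite $Y' \to T \to \Y$ is an epimorphism of stacks (composite of the open surjection $Y'\to T$ with the atlas $p$), and the base extension of $f$ along it is the disjoint union $\coprod_i f_T|_{U_i}$, which is shrinkable. This exhibits $f$ as locally shrinkable in the sense of Definition \ref{D:retraction}.

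The only nontrivial ingredient is the characterization of epimorphisms in the open-cover topology as maps admitting sections on the members of an open cover of the target; without this the ``only if'' direction of the first assertion would not go through. Everything else is formal bookkeeping once pullback stability of shrinkability (Lemma \ref{L:locallyshrinkable1}) and stability under disjoint unions are in hand.
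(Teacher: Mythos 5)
Your proof is correct, and since the paper dismisses this lemma with ``Easy,'' what you wrote is precisely the routine argument intended: local sections of an epimorphism in the open-cover topology, pullback stability of shrinkability (Lemma \ref{L:locallyshrinkable1}), and stability under disjoint unions. The only refinement worth noting is that in the ``only if'' direction the witnessing epimorphism $\Y'\to Y$ of Definition \ref{D:retraction} could a priori have a stack as its source, but your local-section argument applies verbatim in that case (2-morphisms over the space $Y$ are trivial, and $\X'\times_{\Y'}U_i\cong f^{-1}(U_i)$ still holds), so nothing essential is missing.
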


 \begin{proof}
  Easy.
 \end{proof}

 \begin{lem}{\label{L:locallyshrinkable3}}
    If $f \: \X \to \Y$ is a locally shrinkable representable morphism of stacks, then
    $f$ is a universal weak equivalence. That is,  
    for any map $T \to \Y$ from a topological space $T$, the base extension
    $f_T \: \X\times_{Y}T \to T$ is a weak equivalence of topological spaces.
 \end{lem}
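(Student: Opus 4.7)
The plan is to reduce to a question about continuous maps of topological spaces, use shrinkability to produce a local homotopy equivalence, and then invoke a standard local-to-global principle for weak equivalences. First, since $f$ is representable, for any map $T \to \Y$ from a topological space $T$ the fiber product $\X_T := \X\times_{\Y}T$ is (equivalent to) a topological space and $f_T \: \X_T \to T$ is a continuous map. By Lemma \ref{L:locallyshrinkable1} the base extension $f_T$ is again locally shrinkable, so by Lemma \ref{L:locallyshrinkable2} there exists an open cover $\{U_i\}$ of $T$ such that each restriction $f_T^{-1}(U_i) \to U_i$ is shrinkable.

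Next I would observe that a shrinkable continuous map of topological spaces is automatically a homotopy equivalence: by Definition \ref{D:retraction} the section $s$ satisfies $f\circ s = \id$ (the 2-morphism $\varphi$ is trivial for spaces), and shrinkability supplies a homotopy $\id \simeq s\circ f$, so $s$ is a two-sided homotopy inverse of $f$. Hence each restriction $f_T^{-1}(U_i) \to U_i$ is a homotopy equivalence, and in particular a weak equivalence.

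It remains to conclude that $f_T$ itself is a weak equivalence from the fact that it restricts to a weak equivalence on each member of an open cover of its base. This is the main obstacle and the only nonformal homotopy-theoretic input. I would handle it by passing to \v{C}ech nerves: both $T$ and $\X_T$ are weakly equivalent to the geometric realizations of the \v{C}ech nerves of $\{U_i\}$ and $\{f_T^{-1}(U_i)\}$ respectively, and $f_T$ induces a levelwise weak equivalence of these simplicial spaces (each simplex is a finite intersection, for which the local homotopy equivalence statement still applies), which upgrades to a weak equivalence on realizations. A more hands-on alternative is to verify bijectivity on $\pi_n$ directly for every basepoint: any compact test map $S^n \to T$ or $D^{n+1} \to T$ can be subdivided by a Lebesgue-number argument into pieces landing in individual $U_i$, and the local surjectivity and injectivity statements then glue inductively via Mayer--Vietoris-type reasoning.
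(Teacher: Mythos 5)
Your argument is correct in outline, but note that it is not literally the paper's argument: the paper disposes of this lemma by citing \cite{Homotopytypes}, Lemma 5.4, so what you have written is in effect a self-contained proof of that outsourced result. Your reduction steps are exactly right: representability makes $f_T$ a continuous map of spaces, stability of (local) shrinkability under base change (Lemma \ref{L:locallyshrinkable1}) keeps $f_T$ locally shrinkable, Lemma \ref{L:locallyshrinkable2} yields an open cover $\{U_i\}$ of $T$ over which $f_T$ is shrinkable, and a shrinkable map of spaces is a homotopy equivalence (the 2-morphism in Definition \ref{D:retraction} is trivial for spaces). Two points in your final local-to-global step should be made explicit. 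First, the gluing theorem for weak equivalences requires the hypothesis over all finite intersections $U_{i_0}\cap\dots\cap U_{i_n}$, not just over the $U_i$ themselves; this does hold here, because shrinkability is preserved by arbitrary base change (Lemma \ref{L:locallyshrinkable1} again), hence by restriction to open subsets of $U_{i_0}$, but you only gesture at it. Second, the \v{C}ech-nerve argument is the one genuinely nonformal input and needs care: for an arbitrary open cover one should use the homotopy colimit of the \v{C}ech diagram (or a fat/``good'' realization), since a levelwise weak equivalence of simplicial spaces need not induce a weak equivalence on naive geometric realizations; alternatively, quote the standard local-to-global theorem for weak equivalences (tom Dieck; May's quasifibration paper), which your Lebesgue-number and Mayer--Vietoris sketch essentially reproves. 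Finally, observe that when $T$ is paracompact there is a one-line shortcut: Lemma \ref{L:Dold2} makes $f_T$ globally shrinkable, hence a homotopy equivalence; but since the site is all compactly generated spaces, the general statement really does need the gluing argument you describe.
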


 \begin{proof}
  This is \cite{Homotopytypes}, Lemma 5.4.
 \end{proof}

The following lemma provides a natural class of shrinkable morphisms  
constructed using mapping stacks. 
For a discussion of mapping stacks we refer the reader to \cite{Mapping}.
All we need to know here about mapping stacks
$\Map(\X,\Y)$ is the functoriality in the two variables $\X$ and $\Y$ and the exponential
property. 
 
 \begin{lem}{\label{L:contractible}}
   Let $X$ be a topological space and $r \: I\times X  \to X$
   a deformation retraction of $X$ onto a point $x \in X$. Let $\Y$ be a stack, and
   let $c_{\Y} \: \Y \to \Map(X,\Y)$ be the morphism parametrizing the
   constant maps from $X$ to $\Y$ (more precisely,  $c_{\Y}$ is induced from
   $X \to *$ by the functoriality of the mapping stack). Let $\ev_x \: 
   \Map(X,\Y) \to \Y$ be the evaluation map at $x$. Then, $c_{\Y}$
   is  a section of $\ev_x$ and $\ev_x$ is shrinkable onto  $c_{\Y}$.
 \end{lem}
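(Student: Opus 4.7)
The plan is to produce everything by exponentiating the deformation retraction $r$, using the functoriality and the exponential property of the mapping stack. First, $c_\Y$ is literally a section of $\ev_x$: for a test space $T$ and a point $y\in\Y(T)$, the constant map $c_\Y(y)\: X\to\Y$ factors through $X\to *\to\Y$, so evaluating it at $x$ returns $y$. Hence $\ev_x\circ c_\Y=\id_\Y$ and we may take $\varphi=\id$.

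Next, I build the homotopy $H\:I\times\Map(X,\Y)\to\Map(X,\Y)$ as the map adjoint to the composition
\[
\widetilde H\:I\times\Map(X,\Y)\times X \;\xrightarrow{\sim}\; \Map(X,\Y)\times I\times X \;\xrightarrow{\id\times r}\; \Map(X,\Y)\times X \;\xrightarrow{\ev}\; \Y,
\]
i.e.\ $\widetilde H(t,f,y)=f(r(t,y))$ on points. Since $r_0=\id_X$, the exponential adjunction identifies $H_0$ with $\id_{\Map(X,\Y)}$; since $r_1$ is the constant map at $x$, it identifies $H_1$ with $c_\Y\circ\ev_x$. I take $\ep_0$ and $\ep_1$ to be these canonical 2-isomorphisms.

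It remains to supply the 2-morphism $\psi\:\ev_x\circ\pr_2\twomor{}\ev_x\circ H$ with the required endpoint compatibilities. Here the strong-retraction property $r(t,x)=x$ for all $t\in I$ is essential: it forces
\[
\ev_x\bigl(H(t,f)\bigr)=f\bigl(r(t,x)\bigr)=f(x)=\ev_x(f),
\]
so $\ev_x\circ H=\ev_x\circ\pr_2$ on the nose, and $\psi$ can be taken to be the identity 2-morphism. All the conditions of Definition \ref{D:relhtpy} then reduce to identities, so $(H,\ep_0,\ep_1,\psi)$ is the required fiberwise deformation of $\id_{\Map(X,\Y)}$ onto $c_\Y\circ\ev_x$.

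The main point to handle carefully is the 2-categorical bookkeeping: one must verify that the exponential adjunction is compatible with precomposition by the endpoint inclusions $\{0\},\{1\}\hookrightarrow I$ and with $\ev$, so that the identifications $H_0=\id_{\Map(X,\Y)}$ and $H_1=c_\Y\circ\ev_x$ really are canonical 2-isomorphisms and not merely weak equivalences. The substantive geometric input is the identity $r(t,x)=x$; without it, $\psi$ would have to realize a path $f\circ\gamma$ in $\Y$ as a 2-morphism, which is generally impossible.
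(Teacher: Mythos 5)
Your construction is exactly the paper's: there $H$ is obtained by applying $\Map(-,\Y)$ to the deformation retraction $r$ and using the exponential property twice, which is precisely the adjoint description $\widetilde H(t,f,y)=f(r(t,y))$ you give, with $\ev_x\circ c_\Y=\id_\Y$ checked directly. Your extra care with the endpoint 2-isomorphisms and the observation that fiberwiseness over $\Y$ needs $r(t,x)=x$ (implicit in the paper's assertion that $H$ is a \emph{fiberwise} homotopy, and satisfied in the intended applications such as $\ev_0\:P\Y\to\Y$) only adds detail to the same argument.
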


 \begin{proof}
   The deformation retraction $r \: I\times X  \to X$ induces
      $$r^* \: \Map(X,\Y) \to \Map(I\times X, \Y)\cong \Map(I,\Map(X,\Y)).$$
   Using the exponential property again, this gives the desired shrinking map
      $$H \:  I \times \Map(X,\Y) \to \Map(X,\Y).$$
    That is, $H$ is fiberwise homotopy from $\id_{\Map(X,\Y)}$ to $c_{\Y}\circ\ev_x$.
 \end{proof} 
  
\subsection{Classifying spaces of topological stacks}{\label{SS:classifying}}  
 
The main theorem here is the following. It will be of crucial importance for us throughout the paper.

 \begin{thm}[\oldcite{Homotopytypes}, Theorem 6.3]{\label{T:nice}}
  Every topological stack $\X$ admits an atlas $\varphi \: X \to \X$
  which is  locally shrinkable. In particular, $\varphi$ is a universal weak equivalence.
 \end{thm}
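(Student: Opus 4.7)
The plan is to construct the atlas $\varphi \: X \to \X$ explicitly by a Milnor--Segal style classifying-space construction applied to a groupoid presentation of $\X$, and then verify local shrinkability by pulling back along an arbitrary test map and exploiting the contractibility of the fibers of the construction.

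First I would fix any groupoid presentation $\bbX = [R \sst{} X_0]$ of $\X$ and form a ``fattened'' geometric realization $X$ of the nerve $N_\bullet \bbX$ --- concretely, an infinite Milnor join of source fibers, or equivalently the bar construction carrying a free $\bbX$-action. This $X$ comes equipped with a canonical representable morphism $\varphi \: X \to \X$, so $\varphi$ is an atlas; and over each point of $X_0$ the homotopy fiber of $\varphi$ is an infinite join, hence contractible in a canonical way compatible with the groupoid action.

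Next, to establish local shrinkability in the sense of Definition \ref{D:retraction}, I would take any morphism $T \to \X$ from a topological space and study the pullback $\varphi_T \: X \times_{\X} T \to T$. Because the bar/join construction is built from simplicial data indexed by partitions of unity, $T$ admits a numerable open cover $\{U_i\}$ on which $\varphi_T|_{U_i}$ carries an honest section together with an explicit fiberwise strong deformation retraction onto that section, obtained by iteratively collapsing the join coordinates. This exhibits $\varphi_T|_{U_i}$ as shrinkable, so by Lemma \ref{L:locallyshrinkable2} the morphism $\varphi$ is locally shrinkable.

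The main technical obstacle will be synthesizing the bar/join construction with the groupoid action so that the partition-of-unity data really produces numerable \emph{shrinkings} rather than merely numerable local triviality or a levelwise weak equivalence; extending the classical principal-bundle picture from topological groups to general topological groupoids requires handling base-point changes along arrows of $\bbX$ coherently across simplicial degrees, and verifying that the resulting $X$ is an honest topological space of the type allowed as a chart. Once local shrinkability is in hand, the concluding assertion that $\varphi$ is a universal weak equivalence is immediate from Lemma \ref{L:locallyshrinkable3}.
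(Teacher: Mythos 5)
This paper does not actually prove Theorem \ref{T:nice}: it is quoted from \cite{Homotopytypes}, Theorem 6.3, and your sketch is in essence a reconstruction of the argument given there --- a Milnor--Haefliger join/bar construction applied to a groupoid presentation (objects $X_0$, arrows $R$), with the required (local) shrinkability obtained by sliding/collapsing join coordinates, Dold's partition-of-unity results entering for the stronger statements over paracompact bases. So in terms of approach you are not diverging from the source; the comments below concern one step of your outline that would fail as literally written, though it is easily repaired.

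For an arbitrary map $T \to \X$ from a topological space you cannot claim that $T$ ``admits a numerable open cover'' over which the pulled-back join bundle has sections: numerability is a paracompactness-type property of $T$ (or of the pulled-back torsor) and is not produced by the bar construction. Fortunately it is also not needed. Local shrinkability in the sense of Definition \ref{D:retraction} asks only for a \emph{single} epimorphism over which the pullback is globally shrinkable, and the cleanest choice is the chart $X_0 \to \X$ itself: the pullback of $\varphi$ there is the infinite fiberwise join of arrow spaces over $X_0$, the unit arrows give a global section, and the usual shift-and-slide homotopy on join coordinates gives a fiberwise strong deformation retraction onto that section. (Your route through Lemma \ref{L:locallyshrinkable2} also works once ``numerable open cover'' is weakened to ``open cover'': sections exist over some open cover of any $T$ simply because $X_0 \to \X$ is an epimorphism for the open-cover topology; numerability and Dold's results are what one needs for the stronger assertion that the pullback to every \emph{paracompact} $T$ is globally shrinkable, which is how classifying spaces are exploited elsewhere, not for local shrinkability.) Two smaller points: representability alone does not make $\varphi$ an atlas --- you also need it to be an epimorphism, which here follows since the join construction has a section over $X_0$ and $X_0 \to \X$ is an epimorphism; and the contractibility/retraction of the infinite join requires the Milnor-type topology and the standard shift trick, as you acknowledge. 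With these adjustments your outline is correct and matches the cited proof, and the final assertion is indeed immediate from Lemma \ref{L:locallyshrinkable3}.
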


By definition (\cite{Homotopytypes}, Definition 6.2), such an atlas $\varphi \: X \to \X$ is a 
{\em classifying space} for $\X$. A classifying space $X$ captures the homotopy theoretic 
information in $\X$
via the map $\varphi$. In the following sections we will make use of classifying spaces to reduce
problems about topological stacks to ones about topological spaces.

\section{Fibrations between stacks}{\label{S:Fibrations1}}

 \begin{defn}{\label{D:lifting}}
  Let $i \: \A \to \B$ and $p \: \X \to \Y$ be morphisms
  of stacks.  We say that $i$ has {\bf weak left lifting property}
  (WLLP) with respect to
   $p$, if for every 2-commutative
  diagram
           $$\xymatrix@=8pt@M=6pt{
         \A   \ar[rr]^f\ar[dd]_{i}   & \ar@{=>}[dl]^{\alpha}&
              \X  \ar[dd]^{p}  \\ & & \\
         \B    \ar[rr]_g
        & &  \Y    }$$
  we can find a morphism $k \: \B \to \X$, a 2-morphism $\be \: p\circ k \Ra g$,
  and a fiberwise homotopy  (Definition \ref{D:relhtpy})
  $H$ from $f$ to  $k\circ i$  relative to  $\be\circ i$,  as in the diagram
      $$\xymatrix@=8pt@M=6pt{
         \A   \ar[rr]^f\ar[dd]_{i}   & &
              \X  \ar[dd]^{p}  \\  \ar@{}[ur] |-{H} & & \ar@{:>}[dl]_{\be}\\ %
         \B \ar@{..>}[uurr] |-{k}   \ar[rr]_g
        & &  \Y    }$$
  (So $p\circ H$ is a ghost homotopy  from
  $p\circ f$ to  $p\circ k\circ i$  and we have
  $(p\circ H )(\be\circ i)=\alpha \: p\circ f \Ra g\circ i$.)
  If the homotopy $H$ can be chosen to be a ghost homotopy
  (Definition \ref{D:relhtpy}), we say that $i$
  has {\bf  left lifting property} (LLP) with respect to $p$.
 \end{defn}

 \begin{defn}[\oldcite{Dold} $\S$5]{\label{D:WCHP}}
   Let $\A$ be a stack and $p \: \X \to \Y$  a morphism of stacks.
   We say that $p$ has {\bf weak covering homotopy
   property} (WCHP)  for $\A$ if the inclusion $i \: \A \to \A\times I$, $a\mapsto (a,0)$,
   has  WLLP with respect to $p$. Given a fixed class
   $\mathcal{T}$ of stacks (e.g., all compactly generated 
   topological spaces, paracompact spaces,
   CW complexes, etc.) we say that $p$ has WCHP with respect to $\mathcal{T}$ if
   it has WCHP for all $\A \in \mathcal{T}$.
   Similar definitions can be made for {\bf covering homotopy  property} (CHP).
 \end{defn}

 \begin{lem}{\label{L:lifting}}
  Consider the 2-commutative diagram of stacks
     $$\xymatrix@=8pt@M=6pt{
         \A  \ar[rr]\ar[dd]_{i}  & \ar@{=>}[dl] & \X' \ar[dd]_{p'} \ar[rr] &
                               \ar@{=>}[dl]  & \X  \ar[dd]^{p} \\
                          & &  & &   \\
         \B   \ar[rr] & &  \Y' \ar[rr] && \Y   }$$
  Assume that the right square is 2-cartesian. Then, the (weak) left lifting problem
  of $i$  can be solved with respect to $p$ if and only if  it can be solved with respect to $p'$.
 \end{lem}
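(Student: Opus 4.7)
The plan is to verify each implication separately, exploiting the universal property of the 2-cartesian right square, which identifies $\X'$ with $\Y'\times_\Y\X$. Write $q\:\X'\to\X$ and $q_{\Y'}\:\Y'\to\Y$ for the right horizontal maps, and $f'\:\A\to\X'$, $g'\:\B\to\Y'$ for the upper-left and lower maps of the left square, so that the outer square has $f=q\circ f'$ on top and $g=q_{\Y'}\circ g'$ on the bottom.

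For the ``if'' direction, suppose a (weak) lift of the left square against $p'$ has been constructed, consisting of data $(k',\beta',H')$ as in Definition \ref{D:lifting}. Setting $k:=q\circ k'$, composing $q\beta'$ with the canonical 2-isomorphism $p\circ q\cong q_{\Y'}\circ p'$ provided by the 2-cartesian square, and post-composing $H'$ with $q$ yields the data solving the lifting problem against $p$. This direction is purely formal, using only functoriality and the 2-cartesian structure; in particular, ghost homotopies are preserved by $q$, so both the LLP and WLLP variants pass through.

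For the ``only if'' direction, suppose $(k,\beta,H)$ solves the lifting problem against $p$, so that $\beta\:p\circ k\Ra g=q_{\Y'}\circ g'$. By the universal property of the 2-cartesian square, the triple $(g',k,\beta)$ determines a morphism $k'\:\B\to\X'$, together with canonical 2-isomorphisms $p'\circ k'\cong g'$ and $q\circ k'\cong k$; these furnish the required $k'$ and $\beta'$. To produce $H'$, I would apply the universal property once more: take the $\Y'$-component of $H'\:I\x\A\to\X'$ to be the constant homotopy $g'\circ i\circ\pr_2$, take the $\X$-component to be $H$, and use the ghost homotopy $\psi$ built into the data of $H$ (Definition \ref{D:relhtpy}) as the coherence 2-morphism in $\Y$. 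The boundary 2-morphisms $\ep_0$, $\ep_1$ for $H'$ then lift from those of $H$ by the same universal property.

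The main obstacle will be the bookkeeping with 2-morphisms: one must verify that the coherence conditions $\psi_0=p\circ\ep_0$ and $(\psi_1)(p\circ\ep_1)=\beta\circ i$ for $H$ translate, under the identification $\X'\cong\Y'\times_\Y\X$, into the corresponding relations for $H'$ relative to $\beta'\circ i$. Once the dictionary between maps to $\X'$ and compatible pairs of maps to $\Y'$ and $\X$ is made precise, no further idea is needed. For the strict (not weak) lifting variant, if $H$ is a ghost homotopy then its $\X$-component factors through $\pr_2\:I\x\A\to\A$, and the chosen $\Y'$-component does so by construction, so $H'$ is itself a ghost homotopy, confirming that LLP transfers alongside WLLP.
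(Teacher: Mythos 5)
Your argument is correct: the ``if'' direction by composing the lift with the projection $\X'\to\X$, and the ``only if'' direction by using the universal property of the 2-fiber product to assemble $k'$ and $H'$ from $(g',k,\beta)$ and $(g'\circ i\circ\pr_2,H,\psi)$, with the coherence conditions of Definition~\ref{D:relhtpy} checked componentwise. This is exactly the routine verification the paper dismisses as ``straightforward,'' so you have simply supplied the intended details.
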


 \begin{proof}
  Straightforward.
 \end{proof}

 \begin{lem}{\label{L:compose0}}
   Let $p \: \X \to \Y$, $q \: \Y \to \Z$ and $i \: \A \to \B$ be
   morphisms of  stacks. If $i$ has
   LLP with respect to $p$ and $q$, then it has LLP with respect to $q\circ p$.
   If $i$ has WLLP  with respect to $p$ and $q$, and $p$ has
   WCHP with respect to $\A$,  then $i$ has WLLP  with respect to $q\circ p$.  
 \end{lem}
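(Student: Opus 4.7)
The plan is to factor a lifting problem for $q\circ p$ into two successive lifting problems, first against $q$ in $\Y$ and then against $p$ in $\X$. Given the data of a 2-commutative square with top arrow $f \: \A \to \X$, bottom arrow $g \: \B \to \Z$, left arrow $i$, right arrow $q\circ p$, and structural 2-morphism $\al \: (q\circ p)\circ f \Ra g\circ i$, I first apply the (W)LLP of $i$ with respect to $q$ to the square with top arrow $p\circ f \: \A \to \Y$ and bottom arrow $g$, equipped with $\al$ viewed as a 2-morphism $q\circ(p\circ f)\Ra g\circ i$. This produces a morphism $k' \: \B \to \Y$, a 2-morphism $\be' \: q\circ k' \Ra g$, and a (ghost) homotopy from $p\circ f$ to $k'\circ i$ relative to $\be'\circ i$.

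For part (1), this intermediate homotopy is by hypothesis a ghost homotopy, i.e., a 2-morphism $\Phi' \: p\circ f \Ra k'\circ i$ with $(q\circ \Phi')(\be'\circ i) = \al$. Feeding $\Phi'$ as structural 2-morphism into the LLP of $i$ with respect to $p$, applied to the square with top arrow $f$ and bottom arrow $k'$, yields a morphism $k \: \B \to \X$, a 2-morphism $\be \: p\circ k \Ra k'$, and a 2-morphism $\Phi \: f \Ra k\circ i$ with $(p\circ \Phi)(\be\circ i) = \Phi'$. Setting $\be'' := \be' \cdot (q\circ \be) \: (q\circ p)\circ k \Ra g$, a short 2-morphism computation gives $((q\circ p)\circ \Phi)(\be''\circ i) = \al$, so $(k,\be'',\Phi)$ solves the original ghost lifting problem.

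For part (2), the intermediate homotopy in $\Y$ is an honest homotopy $H' \: I\times \A \to \Y$ from $p\circ f$ to $k'\circ i$ relative to $\be'\circ i$. Here the assumption that $p$ has WCHP for $\A$ enters: applying the WLLP of $j \: \A \hookrightarrow \A\times I$ with respect to $p$ (which is precisely Definition \ref{D:WCHP}) to the square with top arrow $f$ and bottom arrow $H'$, I obtain a map $\tilde H \: I\times \A \to \X$ together with a 2-morphism $\delta \: p\circ \tilde H \Ra H'$ and a homotopy from $f$ to $\tilde H_0$. Next, applying the WLLP of $i$ with respect to $p$ to the square with top arrow $\tilde H_1 \: \A \to \X$ and bottom arrow $k' \: \B \to \Y$, equipped with the composite 2-morphism extracted from $\delta_1$ and the endpoint identification $H'_1 \Ra k'\circ i$, produces $k \: \B \to \X$, $\be \: p\circ k \Ra k'$, and a homotopy from $\tilde H_1$ to $k\circ i$ relative to $\be\circ i$. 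Lemma \ref{L:glue} then concatenates the three homotopies $f \rsa \tilde H_0 \rsa \tilde H_1 \rsa k\circ i$ (the middle one coming from $\tilde H$ itself) into the required homotopy from $f$ to $k\circ i$ relative to $\be''\circ i$, with $\be'' := \be'\cdot (q\circ\be)$.

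The main obstacle is the bookkeeping of 2-morphisms and of the coherence data $(\ep_0,\ep_1,\psi)$ attached to each homotopy in Definition \ref{D:relhtpy}: at every step one must verify that the structural 2-morphisms compose so as to recover $\al$ on the nose, and that the successive invocations of Lemma \ref{L:glue} preserve the ``relative to $\be''\circ i$'' condition. This is a routine but careful diagram chase, and no ingredients beyond repeated use of (W)LLP, WCHP, and Lemma \ref{L:glue} are required.
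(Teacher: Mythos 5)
Your proposal is correct and follows essentially the same route as the paper: solve the lifting problem against $q$ first, use the WCHP of $p$ for $\A$ to lift the resulting homotopy to $\X$, solve a second lifting problem against $p$ at the $t=1$ end, and concatenate the three homotopies via Lemma \ref{L:glue} (the ghost/LLP case being the straightforward composition of 2-morphisms, as in the paper). No substantive differences to report.
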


 \begin{proof}
  The lemma is straightforward without the `weak' adjective.
  The case where the adjective `weak' is present is less trivial, so we give more details.
  Consider the weak lifting problem
        $$\xymatrix@=8pt@M=6pt{
         \A   \ar[rr]^f\ar[dd]_{i}   & \ar@{=>}[dl] &
              \X  \ar[dd]^{q\circ p}  \\ & & \\
         \B    \ar[rr]_g
        & &  \Z    }$$
  We solve it in three steps.  First we solve
       $$\xymatrix@=8pt@M=6pt{
          \A  \ar[rr]^{p\circ f} \ar[dd]_i & & \Y  \ar[dd]^{q}    \\
         \ar@{}[ur] |-{h} & & \ar@{:>}[dl] \\ %
         \B \ar@{..>}[uurr] |-{k}   \ar[rr]_g
        & &  \Z    }$$
  Here, $h$ is a fiberwise homotopy from $p\circ f$ to $k\circ i$. Next, we solve
        $$\xymatrix@=8pt@M=6pt{
          \A  \ar[rr]^f \ar[dd]_{0\times\id_A} & & \X  \ar[dd]^{p}    \\
         \ar@{}[ur] |-{h'} & & \ar@{:>}[dl]\\ %
         I\times \A \ar@{..>}[uurr] |-{l}   \ar[rr]_h
        & &  \Y    }$$
  Here, $h'$ is a fiberwise homotopy from $f$ to $l|_{t=0}$.      Finally, we solve
          $$\xymatrix@=8pt@M=6pt{
          \A  \ar[rr]^{l|_{t=1}} \ar[dd]_i & & \X  \ar[dd]^{p}    \\
         \ar@{}[ur] |-{h''} & & \ar@{:>}[dl] \\ %
         \B \ar@{..>}[uurr] |-{m}   \ar[rr]_k 
        & &  \Y    }$$
  Here, $h''$ is a fiberwise homotopy from  $l|_{t=1}$   to $m\circ i$. The solution to our
  original problem would then  be
         $$\xymatrix@=8pt@M=6pt{
          \A  \ar[rr]^{f} \ar[dd]_i & & \X  \ar[dd]^{q\circ p}    \\
         \ar@{}[ur] |-{H} & & \ar@{:>}[dl] \\ %
         \B \ar@{..>}[uurr] |-{m}   \ar[rr]_g 
        & &  \Z    }$$
  where $H$ is the fiberwise homotopy from $f$ to  $m\circ i$ obtained by gluing $h'$, $l$, and
  $h''$ (Lemma \ref{L:glue}).
 \end{proof}

 \begin{lem}{\label{L:compose1}}
  Let $p \: \X \to \Y$, $q \: \Y \to \Z$ and $i \: \A \to \B$ be
  morphisms of  stacks. If $\emptyset \to \A$ has LLP with respect to
  $p$ and $i$ has LLP (resp., WLLP) with respect to
  $q\circ p$, then $i$ has LLP (resp., WLLP) with respect to $q$.
 \end{lem}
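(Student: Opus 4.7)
The plan is to use the LLP hypothesis for $\emptyset \to \A$ to lift $f$ to a map into $\X$, then apply the lifting property for $q\circ p$, and finally project the resulting lift back down via $p$. All the work is in carefully tracking the 2-morphisms.

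Given a lifting problem $\alpha \: q\circ f \twomor{} g\circ i$ with $f \: \A \to \Y$ and $g \: \B \to \Z$, first I apply the LLP of $\emptyset \to \A$ with respect to $p$ to the diagram $\emptyset \to \X$, $\A \llra{f} \Y$ (where $\alpha$ is vacuous). This produces a map $\tilde f \: \A \to \X$ together with a 2-morphism $\beta_0 \: p\circ \tilde f \twomor{} f$. Next I form the auxiliary lifting problem for $i$ against $q\circ p$ with top map $\tilde f$, bottom map $g$, and 2-morphism $(q\circ\beta_0)\alpha \: q\circ p\circ \tilde f \twomor{} g\circ i$. The (W)LLP of $i$ with respect to $q\circ p$ supplies $\tilde k \: \B \to \X$, a 2-morphism $\beta_1 \: q\circ p\circ \tilde k \twomor{} g$, and a (ghost) homotopy $H$ from $\tilde f$ to $\tilde k\circ i$ whose associated downstairs 2-morphism $\varphi$ satisfies $\varphi(\beta_1\circ i)=(q\circ\beta_0)\alpha$.

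I then set $k := p\circ \tilde k \: \B \to \Y$, so that $q\circ k = q\circ p\circ \tilde k$ and $\beta_1$ itself serves as the required 2-morphism $q\circ k \twomor{} g$. To produce a (ghost) homotopy from $f$ to $k\circ i$ with respect to $q$, I compose $\beta_0^{-1} \: f \twomor{} p\circ\tilde f$ (viewed as a ghost homotopy over $q$) with $p\circ H \: p\circ\tilde f \leadsto p\circ\tilde k\circ i = k\circ i$ (a homotopy over $q$ with associated 2-morphism $\varphi$). In the LLP case $H$ is itself a 2-morphism and the composition $\beta_0^{-1}(p\circ H)$ is the desired ghost homotopy; in the WLLP case, the composition is formed via Lemma \ref{L:glue}. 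Either way, the resulting downstairs 2-morphism is $(q\circ\beta_0^{-1})\varphi$, and the required compatibility holds because
\[
(q\circ \beta_0^{-1})\,\varphi\,(\beta_1\circ i) \;=\; (q\circ \beta_0^{-1})(q\circ \beta_0)\,\alpha \;=\; \alpha.
\]

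There is no real obstacle here: the $\emptyset \to \A$ hypothesis exactly ensures that the auxiliary problem into $\X$ has an initial lift of $f$, and once that is available the construction is purely formal. The only point that requires care is the bookkeeping of 2-morphisms in the WLLP case, where the gluing of $\beta_0^{-1}$ with $p\circ H$ must be performed using Lemma \ref{L:glue} rather than by naive concatenation, since the latter is not available in the 2-category of topological stacks.
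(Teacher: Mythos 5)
Your proof is correct and is precisely the ``straightforward'' argument the paper intends (its own proof consists of that single word): use the LLP of $\emptyset \to \A$ to lift $f$ through $p$ to $\tilde f$ with $\beta_0 \: p\circ\tilde f \Rightarrow f$, solve the resulting lifting problem for $i$ against $q\circ p$ with 2-morphism $(q\circ\beta_0)\alpha$, and push the lift $\tilde k$ and the homotopy $H$ back down along $p$, the compatibility $(q\circ\beta_0^{-1})\,\varphi\,(\beta_1\circ i)=\alpha$ holding exactly as you compute. Your one point of care is also the right one: in the WLLP case the concatenation of the ghost homotopy $\beta_0^{-1}$ with $p\circ H$ (which is indeed a homotopy over $q$ relative to $\varphi$, since the data $\ep_0,\ep_1,\psi$ push forward along $p$) must be done via Lemma~\ref{L:glue} rather than naive gluing, and in the LLP case the composite 2-morphism $\beta_0^{-1}(p\circ H)$ serves directly as the required ghost homotopy.
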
  
 
 \begin{proof}
   Straightforward.
 \end{proof} 

 \begin{defn}{\label{D:fibration}}
  Let $p \: \X \to \Y$ be a morphism of  stacks. We say that
  $p$ is a {\bf Hurewicz fibration}, if it has CHP for
  all compactly generated topological spaces.
  We say that $p$ is a {\bf Serre fibration}, if it has CHP for all finite CW
  complexes. (In general, we can define a {\em $\mathcal{T}$-fibration} to
  be a map which has CHP for $\mathcal{T}$.) Similarly, we can
  define {\em weak Hurewicz fibration} and {\em weak Serre
  fibration} (more generally, {\em weak $\mathcal{T}$-fibration}).
 \end{defn}

 \begin{defn}{\label{D:trivial}}
  Let $p \: \X \to \Y$ be a representable  morphism of
  stacks. We say that
  $p$ is a {\bf (weak) trivial Hurewicz fibration}, if every cofibration
  $ i \: A \to B$ of   topological spaces has (W)LLP with respect to
  $p$. We say that $p$ is  {\bf (weak) trivial Serre fibration} if every
  cellular inclusion  $ i \: A \to B$ of CW complexes has (W)LLP with respect
  to $p$.
 \end{defn}

 \begin{lem}{\label{L:compose2}}
  Let $p \: \X \to \Y$ and $q \: \Y \to \Z$  be
  morphisms of  stacks. If $p$ and $q$ are
  (weak) (trivial) Hurewicz/Serre fibrations, then so is $q\circ p$.
 \end{lem}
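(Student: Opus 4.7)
The plan is to reduce the statement directly to Lemma~\ref{L:compose0}, which already handles composition at the level of an individual lifting problem. In each of the four cases (Hurewicz, Serre, trivial Hurewicz, trivial Serre), I would simply fix the class of inclusions $i$ whose (W)LLP against a map characterizes that type of fibration, and then check that the hypotheses of Lemma~\ref{L:compose0} are met for every such $i$.

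Concretely, for (weak) Hurewicz fibrations take $i \: \A \to \A\times I$, $a\mapsto(a,0)$, with $\A$ a compactly generated space; for (weak) Serre fibrations take the same inclusions with $\A$ a finite CW complex; for (weak) trivial Hurewicz fibrations take any cofibration $A\to B$ of topological spaces; and for (weak) trivial Serre fibrations take any cellular inclusion of CW complexes. By hypothesis, every such $i$ has LLP (resp.\ WLLP) with respect to $p$ and with respect to $q$, so Lemma~\ref{L:compose0} yields LLP (resp.\ WLLP) of $i$ with respect to $q\circ p$. By Definitions~\ref{D:fibration} and~\ref{D:trivial}, this is exactly the conclusion that $q\circ p$ is a fibration of the same type.

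The only point where the `weak' case demands an extra line is the additional hypothesis of Lemma~\ref{L:compose0} that $p$ has WCHP for the source $\A$ of $i$. For weak Hurewicz (resp.\ weak Serre) fibrations this is built into the hypothesis on $p$, since $\A$ ranges precisely over the class for which $p$ is assumed to have WCHP. For a weak trivial Hurewicz fibration, the inclusion $A\hookrightarrow A\times I$ is a (closed) cofibration in $\CGTop$, hence has WLLP against $p$ by the hypothesis on $p$, which is exactly WCHP of $p$ for $A$. The weak trivial Serre case is analogous: the product CW structure makes $A\hookrightarrow A\times I$ a cellular inclusion whenever $A$ is a CW complex, so again the trivial-Serre-fibration hypothesis on $p$ supplies the required WCHP.

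I expect no genuine obstacle here: after unwinding the definitions and observing in the weak trivial cases that the inclusions $A\hookrightarrow A\times I$ belong to the relevant classes of cofibrations or cellular inclusions, the entire argument is a single invocation of Lemma~\ref{L:compose0}.
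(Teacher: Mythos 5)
Your proposal is correct and follows essentially the same route as the paper, whose proof is simply ``Follows from Lemma~\ref{L:compose0}''. Your extra verification that $p$ has WCHP for the relevant sources in the weak trivial cases (via the observation that $A\hookrightarrow A\times I$ is a cofibration, resp.\ a cellular inclusion) is exactly the detail the paper leaves implicit, and it is carried out correctly.
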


 \begin{proof}
  Follows from Lemma \ref{L:compose0}.
 \end{proof}

 \begin{defn}{\label{D:locally}}
  Let $P$ be a property of morphisms of  stacks
  which is invariant under base extension (e.g., any of the properties
  defined in Definitions \ref{D:fibration},  \ref{D:trivial}).
  We say that a morphism $p \: \X \to \Y$ is stacks is
  {\bf locally} $P$, if for some epimorphism $\Y' \to \Y$
  the base extension $p' \: \Y'\times_{\Y}\X \to \Y'$ is $P$.
 \end{defn}

 \begin{lem}{\label{L:epibasechange}}
  Let $P$ be a property of morphisms of stacks which is
  invariant under base extension. For example, $P$ can be
  any of the following: (weak) (trivial) Hurewicz/Serre/$\mathcal{T}$ fibration. Let
  $p \: \X \to \Y$ be a morphism of  stacks and
  $\Y' \to \Y$ an epimorphism. If the base extension
  $p' \: \Y'\times_{\Y}\X \to \Y'$ is   locally $P$, then so is $p$.
 \end{lem}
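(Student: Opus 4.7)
The plan is to unwind the definition of \emph{locally $P$} twice and use transitivity of 2-fiber products together with stability of epimorphisms under composition.

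First I would unpack the hypothesis: since $p' \: \Y'\times_{\Y}\X \to \Y'$ is locally $P$, there exists an epimorphism $\Y'' \to \Y'$ such that the base extension
$$p'' \: \Y''\times_{\Y'}(\Y'\times_{\Y}\X) \lra \Y''$$
has property $P$. I would then observe that the composition $\Y'' \to \Y' \to \Y$ is an epimorphism of stacks, since the composition of two representable epimorphisms of stacks is an epimorphism (this is standard and follows from the fact that epimorphisms of stacks are detected on charts).

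Next I would invoke the associativity of 2-fiber products to get a canonical equivalence
$$\Y''\times_{\Y'}(\Y'\times_{\Y}\X) \;\simeq\; \Y''\times_{\Y}\X,$$
under which the map $p''$ is identified with the base extension of $p$ along the composition $\Y'' \to \Y$. Since $P$ is invariant under base extension and is stable under equivalence (being a property of morphisms in the 2-category), it follows that this base extension of $p$ has property $P$.

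Therefore the epimorphism $\Y'' \to \Y$ witnesses that $p$ is locally $P$, which is what we needed to show. There is no real obstacle here; the only point to keep in mind is that one must use the 2-categorical associativity of fiber products (up to canonical equivalence) rather than strict equality, and the fact that $P$ is defined on 2-isomorphism classes of morphisms so this equivalence is harmless.
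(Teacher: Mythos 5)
Your argument is correct and is exactly the ``straightforward'' unwinding the paper has in mind: compose the two witnessing epimorphisms and use the canonical equivalence $\Y''\times_{\Y'}(\Y'\times_{\Y}\X)\simeq\Y''\times_{\Y}\X$. One cosmetic point: you do not need (and are not given) representability of the epimorphisms --- the composite of two epimorphisms of stacks is an epimorphism without that hypothesis.
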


 \begin{proof}
  Straightforward.
 \end{proof}

 \begin{lem}{\label{L:basechange1}}
  Let $p \: \X \to \Y$ be a morphism of  stacks.  
  Let $P$ be any of the properties (locally) (weak) (trivial) Hurewicz/Serre
  fibration (or, $\mathcal{T}$-fibration, for a class 
  $\mathcal{T}$ of topological spaces). If $p$ is $P$, then the base
  extension of $p$ along any morphism $\Y' \to \Y$ of 
  stacks is again $P$. Conversely, if the base extension of $p$ along
  every  morphism $B \to \Y$, with $B$ a topological space,
  is $P$, then $p$ is $P$. (In the case of (weak) $\mathcal{T}$-fibrations
  it is enough to take $B$ in  $\mathcal{T}$.)
 \end{lem}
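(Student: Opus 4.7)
The plan is to reduce both directions to Lemma \ref{L:lifting}. In every case listed, the property $P$ is defined by a (weak) lifting condition against a class of generating maps $i \: \A \to \B$ whose source and target are topological spaces: for (weak) Hurewicz/Serre fibrations, the generators are the inclusions $\A \hra \A\times I$ with $\A$ compactly generated (respectively, a finite CW complex); for (weak) trivial fibrations, $i$ is already a cofibration or cellular inclusion of topological spaces. Thus, for any $P$ on the list, $\B$ is always a topological space, lying in $\mathcal{T}$ whenever a class $\mathcal{T}$ is specified.

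For the first assertion, assume $p$ is $P$ and let $p' \: \X' := \Y'\times_{\Y}\X \to \Y'$ be its base extension along some $\Y' \to \Y$. Any lifting problem for $p'$ against a generator $i$ post-composes with $\Y' \to \Y$ and $\X' \to \X$ to a lifting problem for $p$, which by hypothesis is solvable; Lemma \ref{L:lifting} converts the solution back into one for $p'$. Hence $p'$ is $P$. If $p$ is only locally $P$, witnessed by an epi $\Y'' \to \Y$ whose base extension $p''$ is $P$, then $\Y'\times_{\Y}\Y'' \to \Y'$ is again an epi, and the base extension of $p'$ along it equals the base extension of $p''$ along $\Y'\times_{\Y}\Y'' \to \Y''$. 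By what we just proved this is $P$, so $p'$ is locally $P$.

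For the converse, given a lifting problem for $p$ with generator $i \: \A \to \B$ and lower edge $g \: \B \to \Y$, base change $p$ along $g$ to obtain $p_{\B} \: \B\times_{\Y}\X \to \B$. Since $\B$ is a topological space (in $\mathcal{T}$ in the relevant cases), $p_{\B}$ is $P$ by hypothesis; the induced lifting problem against $p_{\B}$ with lower edge $\id_{\B}$ is therefore solvable, and Lemma \ref{L:lifting} transfers the solution into one against $p$. For the local statement, apply the same argument to an atlas $Y \to \Y$: by hypothesis $p_Y \: Y\times_{\Y}\X \to Y$ is locally $P$, so there exists an epi $Y' \to Y$ making the further base extension of $p_Y$ a $P$-morphism; the composite $Y' \to Y \to \Y$ is then an epi witnessing that $p$ is locally $P$. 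The only subtlety throughout is carrying along the homotopy and 2-morphism data in the weak cases, but this is precisely what Lemma \ref{L:lifting} handles, so all remaining verifications are formal manipulations with 2-fiber products.
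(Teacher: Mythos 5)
Your proof is correct and takes essentially the same route as the paper: both directions are reduced to Lemma \ref{L:lifting}, using that every property on the list is a (weak) lifting property against generating maps whose source and target are topological spaces (in $\mathcal{T}$ where relevant), so that the converse only needs base extensions along maps from spaces. Your explicit handling of the ``locally'' adjective via epimorphisms and an atlas is a routine supplement (in the spirit of Lemma \ref{L:epibasechange}) to the argument the paper sketches, not a different method.
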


 \begin{proof}
  The proof is a simple application of Lemma \ref{L:lifting}. Here is how the typical
  argument works. Let
  $i \: A \to B$ be a map for which we want to prove  (W)LLP. To see that $i$
  has (W)LLP with respect to $p$, apply Lemma \ref{L:lifting} to the
  following diagram
    $$\xymatrix@R=8pt@C=2pt@M=6pt{
         A  \ar[rr]\ar[dd]_{i}  & \ar@{=>}[dl] & B\times_{\Y}\X \ar[dd]_{p_B} \ar[rr] &
                               \ar@{=>}[dl]  & \X  \ar[dd]^{p} \\
                          & &  & &   \\
         B   \ar[rr]_{\id} & &  B \ar[rr] && \Y   }$$
 \end{proof}

If  a continuous map $p \: X \to Y$ of topological spaces
is a trivial Hurewicz fibration then $p$ is  shrinkable.
The converse is not true, but we have the following.

 \begin{prop}{\label{P:weaktrivial}}
   Let $p \: X \to Y$  be a continuous map of topological spaces. Then, the following
   are equivalent:
   \begin{itemize}
     \item[1)] Every continuous map $i \: A \to B$ has WLLP with respect to $p$;
     \item[2)]  The map $p$ is a weak trivial Hurewicz fibration (Definition \ref{D:trivial});
     \item[3)] The map $p$ is a weak Hurewicz fibration
   and a homotopy equivalence;
     \item[4)] The map $p$ is shrinkable.
   \end{itemize}
 \end{prop}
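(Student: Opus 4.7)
The plan is to establish the cyclic chain of implications $(1)\Rightarrow(2)\Rightarrow(3)\Rightarrow(4)\Rightarrow(1)$, with the step $(3)\Rightarrow(4)$ supplying the main technical input.

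The implication $(1)\Rightarrow(2)$ is immediate from the definitions: every cofibration of topological spaces is in particular a continuous map, so the hypothesis of (1) is formally stronger than that of (2).

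For $(2)\Rightarrow(3)$, I would first observe that for any space $A$, the inclusion at $\{0\}$, $A\hookrightarrow A\times I$, is a cofibration; hence WLLP along cofibrations yields WCHP, so $p$ is a weak Hurewicz fibration. To produce a homotopy inverse of $p$, I would apply WLLP to the cofibration $\emptyset\hookrightarrow Y$, with $f$ empty and $g=\id_Y$, to obtain a map $s\: Y\to X$ with $p\circ s=\id_Y$ (the 2-morphism $\beta$ in Definition \ref{D:lifting} being trivial for space-to-space maps). Next, I would apply WLLP to the cofibration $X\sqcup X\hookrightarrow X\times I$ of the two endpoints, with $f=(\id_X,\,s\circ p)\: X\sqcup X\to X$ and $g=p\circ\pr_1\: X\times I\to Y$; the square strictly commutes because $p\circ(\id_X,\,s\circ p)=(p,p)=g|_{X\sqcup X}$, so WLLP produces a (fiberwise) homotopy $\id_X\simeq s\circ p$. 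Together with $p\circ s=\id_Y$, this exhibits $s$ as a homotopy inverse of $p$.

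For $(3)\Rightarrow(4)$, which I expect to be the main obstacle, I would invoke Dold's theorem (\cite{Dold}, Theorem 6.1): a weak Hurewicz fibration that is also a homotopy equivalence admits a section onto which the total space strongly deformation retracts fiberwise, i.e.\ is shrinkable in the sense of Definition \ref{D:retraction}. Dold's proof uses partitions of unity in an essential way and I would simply cite it rather than reproduce it.

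Finally, for $(4)\Rightarrow(1)$, suppose $p$ is shrinkable, with section $s\: Y\to X$ (so $p\circ s=\id_Y$) and fiberwise homotopy $H\: I\times X\to X$ from $\id_X$ to $s\circ p$ (satisfying $p\circ H_t=p$ for all $t$). Given any continuous $i\: A\to B$ and any strictly commutative square $(f\: A\to X,\,g\: B\to Y)$ with $p\circ f=g\circ i$, I set $k:=s\circ g\: B\to X$. Then $p\circ k=g$ on the nose, while the composite $H\circ(\id_I\times f)\: I\times A\to X$ is a fiberwise homotopy from $f$ to $s\circ p\circ f=s\circ g\circ i=k\circ i$. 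This exhibits the WLLP of $i$ with respect to $p$ (with trivial 2-morphism $\beta$), completing the cycle.
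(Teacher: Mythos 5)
Your proof is correct, and its overall architecture coincides with the paper's: the same cycle $(1)\Rightarrow(2)\Rightarrow(3)\Rightarrow(4)\Rightarrow(1)$, with $(1)\Rightarrow(2)$ formal, $(3)\Rightarrow(4)$ delegated to Dold's result on maps with WCHP that are homotopy equivalences (the paper cites \cite{Dold}, Corollary 6.2 rather than Theorem 6.1, but it is the same input), and $(4)\Rightarrow(1)$ argued identically via $k=s\circ g$ and the fiberwise deformation onto the section. The only genuine divergence is in $(2)\Rightarrow(3)$: the paper runs a single lifting problem against the cofibration $X\hookrightarrow\operatorname{Cyl}(p)$ of the mapping cylinder, extracting the homotopy inverse as $q=k|_Y$, whereas you split the task into two lifting problems, first $\emptyset\hookrightarrow Y$ to get a strict section $s$ (using that $2$-morphisms between maps of spaces are identities) and then the endpoint inclusion $X\sqcup X\hookrightarrow X\times I$ with $f=(\id_X,\,s\circ p)$ to connect $\id_X$ to $s\circ p$. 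Both work; the mapping-cylinder trick packages section and homotopy into one lift, while your version is slightly more elementary and, since the WLLP data there consists of \emph{fiberwise} homotopies $\id_X\simeq k_0$, $s\circ p\simeq k_1$ together with the vertical homotopy $k$ joining $k_0$ to $k_1$ (this concatenation is the one small step you should make explicit), it actually yields a fiberwise homotopy $\id_X\simeq s\circ p$, i.e.\ it nearly gives $(2)\Rightarrow(4)$ directly; Dold is still needed, of course, for the implication $(3)\Rightarrow(4)$ as stated.
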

 
 \begin{proof} The implication 1) $\Rightarrow$ 2) is  obvious.
  
 \medskip 
 \noindent{\em Proof of 2) $\Rightarrow$ 3).} Let $\operatorname{Cyl}(p)=(X\times[0,1])\coprod_p Y$ 
 be the mapping cylinder
 of $p$, and consider the lifting problem
          $$\xymatrix@M=6pt{
         X  \ar[r]^{\id_X} \ar[d]_{i}   &   
              X  \ar[d]^{p}  \\   
         \operatorname{Cyl}(p)    \ar[r]_g  \ar@{..>}[ru]^{k}
        &   Y    }$$
  Here, $i \: X \to \operatorname{Cyl}(p)$ is the natural inclusion of $X$ in $\operatorname{Cyl}(p)$,
  and $g \: \operatorname{Cyl}(p) \to Y$ is defined by $g(x,t)=p(x)$ and $g(y)=y$, for $x \in X$
  and $y \in Y$. Since $i$ is a  Hurewicz cofibration, we can find  a lift $k$ in the diagram. If we set
  $q=k|_Y \: Y \to X$, it follows that $p\circ q =\id_Y$ and $q\circ p$ is homotopic to
  $\id_X$. This proves that $p$ is a homotopy equivalence. 
  
  To see that $p$ is a weak Hurewicz fibration, observe that the  $t=0$ inclusion 
  $A \to A\times [0,1]$ is a Hurewicz cofibration. 
 
 \medskip 
 \noindent{\em Proof of 3) $\Rightarrow$ 4).} This is \oldcite{Dold}, Corollary 6.2.
 
 \medskip 
 \noindent{\em Proof of 4) $\Rightarrow$ 1).} Consider
  a   lifting problem
          $$\xymatrix@M=6pt{
         A  \ar[r]^{f} \ar[d]_{i}   &   
              X  \ar[d]^{p}  \\   
         B    \ar[r]_g \ar@{..>}[ru]^{k}
        &   Y    }$$
  where $i$ is an arbitrary continuous map. Since $p$ is shrinkable, it admits a section
  $s \: Y \to X$. Set $k=s\circ g$. This makes the lower triangle commutative. The fiberwise
  deformation retraction of $X$ onto $s(X)$ provides a fiberwise homotopy between $f$ and
  $k\circ i$.
 \end{proof}

 \begin{cor}{\label{C:strong}}
  A representable locally shrinkable map $p \: \X \to \Y$
  of stacks is  locally a weak trivial Hurewicz fibration (Definition \ref{D:locally}).
 \end{cor}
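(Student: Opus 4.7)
The plan is to unpack the definition of ``locally'' in the conclusion and then reduce everything to topological spaces, where we can apply the equivalence 4) $\Ra$ 1) of Proposition \ref{P:weaktrivial}. Since $p$ is assumed locally shrinkable, Definition \ref{D:retraction} furnishes an epimorphism $\Y' \to \Y$ such that the base extension $p' \: \X' \to \Y'$ is shrinkable. Because representability is stable under base change, $p'$ remains representable. It therefore suffices to show that this particular $p'$ is a weak trivial Hurewicz fibration in the sense of Definition \ref{D:trivial}; then $p$ is locally a weak trivial Hurewicz fibration by Definition \ref{D:locally}.

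To check that $p'$ is a weak trivial Hurewicz fibration, I need to verify WLLP with respect to $p'$ for every cofibration $i \: A \to B$ of topological spaces. Here Lemma \ref{L:basechange1} does the essential work: it tells me that WLLP for $i$ with respect to $p'$ can be checked after pulling back $p'$ along arbitrary maps $B \to \Y'$ from a topological space. Since $p'$ is representable, each such base change $p'_B \: X'_B \to B$ is an honest continuous map of topological spaces, and by Lemma \ref{L:locallyshrinkable1} it remains shrinkable.

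Now we are entirely in the world of topological spaces, and Proposition \ref{P:weaktrivial} applies: the implication 4) $\Ra$ 1) says that any shrinkable continuous map $p'_B$ has WLLP with respect to \emph{every} continuous map $A \to B$, not just cofibrations. In particular, it has WLLP with respect to the given cofibration $i$. Putting this back through Lemma \ref{L:basechange1} yields that $i$ has WLLP with respect to $p'$ itself, so $p'$ is a weak trivial Hurewicz fibration, completing the argument.

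There is no serious obstacle here; the only point to be attentive to is that Definition \ref{D:trivial} (and hence Definition \ref{D:locally} applied to it) requires representability, which is why the hypothesis on $p$ explicitly includes it and why we must note that representability propagates to $p'$ and then to $p'_B$ before invoking Proposition \ref{P:weaktrivial}.
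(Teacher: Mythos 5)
Your argument is correct and takes essentially the same route as the paper's own (very terse) proof: reduce via Lemma \ref{L:basechange1} and representability to a map of topological spaces, note it stays shrinkable by Lemma \ref{L:locallyshrinkable1}, and apply Proposition \ref{P:weaktrivial}. You have simply spelled out the steps the paper leaves implicit, including the use of the cover $\Y' \to \Y$ furnished by local shrinkability.
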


 \begin{proof}
  By Lemma \ref{L:basechange1}, we are reduced to the case where $X$ and $Y$ are spaces. 
  The claim follows from  Proposition \ref{P:weaktrivial}.
 \end{proof}
 
 \begin{lem}{\label{L:Dold2}}
   Let $p \: X \to Y$ be  a locally shrinkable map of topological spaces. If $Y$ is
   paracompact, then $p$ is 
   shrinkable. In particular, every map $\emptyset \to A$, 
   with $A$ a paracompact topological space,  has 
   LLP with respect to every locally shrinkable 
   representable morphism $p \: \X \to \Y$ of topological stacks.
 \end{lem}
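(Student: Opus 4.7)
The plan is to prove the two statements in sequence, with the first (the global shrinkability over paracompact bases) doing all the real work and the second following by base change.

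For the first assertion, I would invoke Dold's patching theorem for shrinkable maps. By Lemma \ref{L:locallyshrinkable2}, local shrinkability in the space setting means there is an open cover $\{U_\alpha\}$ of $Y$ such that each restriction $p|_{U_\alpha} \: p^{-1}(U_\alpha) \to U_\alpha$ is shrinkable, i.e.\ admits a section $s_\alpha$ together with a fiberwise strong deformation retraction $H_\alpha$ of $p^{-1}(U_\alpha)$ onto $s_\alpha(U_\alpha)$. Since $Y$ is paracompact, we may refine $\{U_\alpha\}$ to a locally finite open cover and choose a subordinate partition of unity. Following Dold (\cite{Dold}, $\S$2), the partition of unity allows us to splice the local sections $s_\alpha$ into a single global section $s \: Y \to X$, and to splice the fiberwise deformation retractions $H_\alpha$ into a global fiberwise homotopy from $\id_X$ to $s \circ p$. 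The essential point is that on overlaps the gluing takes place inside the fiber, so the resulting homotopy remains fiberwise over $Y$. Consequently $p$ is shrinkable, which is precisely the conclusion.

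The main difficulty in this step is the compatibility of the patched section with the patched deformation retraction on overlaps; this is exactly what Dold's argument handles via repeated application of partition-of-unity interpolations inside each fiber, and it works in our compactly generated setting because paracompactness of $Y$ pulls back to suitable paracompactness of the local pieces.

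For the second assertion, consider an arbitrary 2-commutative lifting problem for $i \: \emptyset \to A$ against $p \: \X \to \Y$:
$$\xymatrix@=8pt@M=6pt{
  \emptyset \ar[rr] \ar[dd]_{i} & & \X \ar[dd]^{p} \\ & & \\
  A \ar[rr]_{g} & & \Y
}$$
Since $p$ is representable, the base extension $p_A \: \X_A := A \times_{\Y} \X \to A$ is a morphism of topological spaces, and by Lemma \ref{L:locallyshrinkable1} it is again locally shrinkable. Because $A$ is paracompact, the first part of the lemma shows that $p_A$ is shrinkable, hence admits an honest section $\sigma \: A \to \X_A$. Defining $k \: A \to \X$ as the composition of $\sigma$ with the projection $\X_A \to \X$, and taking $\beta \: p \circ k \Ra g$ to be the canonical 2-isomorphism furnished by the pullback square, yields the required lift. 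The ghost-homotopy condition on $i$ is vacuous since its source $\emptyset$ is empty, so $k$ and $\beta$ solve the original problem and $i$ has LLP with respect to $p$.
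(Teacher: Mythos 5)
Your proof is correct and follows essentially the same route as the paper, whose entire proof is a citation of Dold, $\S$2.1, for the fact that a locally shrinkable map over a paracompact base is shrinkable; your partition-of-unity patching sketch is precisely Dold's argument (with the real work, the compatible splicing of sections and fiberwise retractions, deferred to him just as the paper does). Your base-change deduction of the LLP statement for $\emptyset \to A$ --- representability makes $A\times_{\Y}\X \to A$ a locally shrinkable map of spaces, paracompactness of $A$ gives a section, and the homotopy condition is vacuous for empty source --- is the intended, though unstated, content of the paper's ``in particular.''
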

 
 \begin{proof}
      Follows from \cite{Dold}, $\S$2.1.
 \end{proof}

 \begin{prop}[Hurewicz Uniformization Theorem]{\label{P:uniformization}}
  Let $p \: X \to Y$ be  locally a (weak) (trivial) Hurewicz fibration
  of topological spaces, with $Y$  paracompact.
  Then,  $p$ is a (weak) (trivial) Hurewicz fibration.
 \end{prop}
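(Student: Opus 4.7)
The plan is to reduce the statement to the classical uniformization theorems of Dold \cite{Dold}, which are the space-level analogues of what we want to prove. By hypothesis and Definition \ref{D:locally}, there is an open cover $\{U_i\}_{i\in I}$ of $Y$ such that each restriction $p_i \: p^{-1}(U_i) \to U_i$ is a (weak) (trivial) Hurewicz fibration. Since $Y$ is paracompact, this cover admits a subordinate locally finite partition of unity, so $\{U_i\}$ is in fact a \emph{numerable} cover of $Y$ in the sense of Dold.

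Next I would cite Dold's theorems directly: \cite{Dold}, Theorem 4.8 handles the Hurewicz case, and the parallel statement for weak Hurewicz fibrations is established in \cite{Dold}, $\S$5--6. This gives the non-trivial (weak) Hurewicz case immediately. For the trivial variant, I would appeal to the equivalences in Proposition \ref{P:weaktrivial}: being a weak trivial Hurewicz fibration is equivalent to being shrinkable, and Lemma \ref{L:Dold2} already states that a locally shrinkable map with paracompact base is shrinkable, which handles the weak trivial case. The trivial (non-weak) case follows either by Dold's corresponding uniformization statement for the lifting property with respect to cofibrations, or by combining the Hurewicz case with a local-to-global argument for the section and fiberwise strong deformation retraction supplied by Proposition \ref{P:weaktrivial}.

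The main obstacle is not really on our side at all, since the substantive work is entirely inside Dold's paper. The technical heart of Dold's argument is the construction of a global lifting function from the local ones: one well-orders $I$, and uses the partition of unity to interpolate the local lifts on finite subfamilies, employing the ``exchange trick'' to reconcile the discrepancies on overlaps. In the weak version, successive local lifts only need to agree up to fiberwise homotopy, and one applies the WCHP of each $p_i$ to iteratively adjust them into compatibility. Once this is granted, our proposition is a one-line deduction, and that is how I would present the proof.
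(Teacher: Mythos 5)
Your proposal follows essentially the same route as the paper: the non-trivial (weak) Hurewicz cases are delegated to Dold (the paper cites Theorems 4.8 and 5.12), the weak trivial case is handled exactly as you say via Proposition \ref{P:weaktrivial} together with Lemma \ref{L:Dold2} (locally shrinkable plus paracompact base implies shrinkable), and the trivial case is obtained by combining the two. The only cosmetic difference is that the paper finishes the trivial case by observing that $p$ is then a Hurewicz fibration and a homotopy equivalence (hence a trivial Hurewicz fibration), whereas you leave that final deduction as one of two loosely sketched alternatives; the ingredients are the same.
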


 \begin{proof}
  Without the `trivial' adjective this follows from  \cite{Dold}, Theorems 4.8 and 5.12. 
  If $p$ is  locally a weak trivial Hurewicz fibration, then it is
  locally shrinkable (Proposition \ref{P:weaktrivial}), hence shrinkable (Lemma \ref{L:Dold2}),
  and so a  weak trivial Hurewicz fibration  (Proposition \ref{P:weaktrivial}).
 
  If $p$ is  locally a  trivial Hurewicz fibration, then, as we 
  explained in the beginning of the proof, it follows from Dold's result that
  $p$ is a Hurewicz fibration . On the other hand,
  we just showed that $p$ is a weak trivial Hurewicz fibrations. Therefore, by
  Proposition \ref{P:weaktrivial}, $p$ is a homotopy equivalence. This proves that
  $p$ is a  trivial Hurewicz fibration.
 \end{proof}

In practice, it is much easier to check that a given morphism of stacks is {\em locally} a 
fibration. However, for applications (such as the fiber homotopy exact sequence) we need
to have a global fibration (at least a weak one). Proposition \ref{P:strong2} and Theorem 
\ref{T:strong}  provide local criteria for a morphism of stacks to be a (weak)
Serre fibration.

 \begin{prop}{\label{P:strong2}}
   Let $p \: \X \to \Y$ be a representable morphism of topological stacks.
   If $p$ is   locally a  (weak) (trivial) Hurewicz fibration then it is a
   (weak) (trivial) Serre fibration. 
 \end{prop}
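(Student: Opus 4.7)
The strategy is to reduce the claim, via Lemma \ref{L:basechange1}, to a classical question about topological spaces, and then invoke the Hurewicz Uniformization Theorem (Proposition \ref{P:uniformization}). By the converse direction of Lemma \ref{L:basechange1}, it suffices to prove that for every morphism $B \to \Y$ from a (finite) CW complex $B$, the base extension $p_B \: X_B := B\times_{\Y}\X \to B$ is a (weak) (trivial) Serre fibration. Because $p$ is representable, $X_B$ is an honest topological space and $p_B$ is a continuous map between topological spaces, so I am left with a question purely inside $\sfT$.

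Next I would show that $p_B$ is locally a (weak) (trivial) Hurewicz fibration in the classical sense, i.e.\ over an open cover of $B$. By hypothesis there is an epimorphism $\Y' \to \Y$ of stacks along which the base change of $p$ is a (weak) (trivial) Hurewicz fibration; replacing $\Y'$ by an atlas, I may assume $\Y' = V$ is a topological space, so that $p_V \: V\times_{\Y}\X \to V$ is a classical (weak) (trivial) Hurewicz fibration. Pulling $V \to \Y$ back along $B \to \Y$ yields an epimorphism $V_B \to B$ of topological spaces, which admits continuous local sections $\sigma_i \: U_i \to V_B$ over some open cover $\{U_i\}$ of $B$. Over each $U_i$, the restriction $p_B^{-1}(U_i) \to U_i$ is canonically identified with the pullback of $p_V$ along the composite $U_i \to V_B \to V$, and the classes of fibrations in question are all stable under base change in $\sfT$.

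Finally, since every CW complex is paracompact, Proposition \ref{P:uniformization} upgrades $p_B$ from a local to a global (weak) (trivial) Hurewicz fibration of topological spaces, which in particular is a (weak) (trivial) Serre fibration, completing the reduction. The main technical hurdle is the middle step: verifying that the stacky notion of ``locally $P$'' (defined through a stack-theoretic epimorphism) descends, after pulling back to $B$, to the elementary notion of being a fibration over every piece of an open cover of $B$. The key inputs here are representability of $p$ (which keeps all the pullbacks inside $\sfT$) together with the fact that an epimorphism of topological spaces in the open-cover topology admits continuous sections over an open cover of the target.
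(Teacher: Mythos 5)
Your proposal is correct and follows essentially the same route as the paper: reduce via Lemma \ref{L:basechange1} to a continuous map over a (finite) CW complex, which is paracompact, and then apply the Hurewicz Uniformization Theorem (Proposition \ref{P:uniformization}). The only difference is that you spell out the translation from the stack-theoretic notion of ``locally $P$'' to the classical open-cover notion via local sections of the epimorphism, a step the paper treats as implicit in the base-change invariance of ``locally a Hurewicz fibration.''
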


 \begin{proof}
   By Lemma \ref{L:basechange1}, it is enough to prove the statement after base extending
   $p$ along an arbitrary map $Y \to \Y$ from a topological space $Y$.
   So we are reduced to the case
   where $p \: X \to Y$ is a continuous map of topological spaces.

  To show that
  $p$ is a (weak) (trivial) Serre fibration it is enough to check that the base
  extension $p_K \: K\times_{Y}X \to K$ of $p$ along any morphism
  $g \: K \to Y$, with $K$  a
  finite CW complex, is a (weak) (trivial) Serre fibration. Since being locally a
  (weak) Hurewicz fibration is invariant under base change,
  $p_K$ is  locally a (weak) (trivial) Hurewicz fibration.
  Since $K$ is paracompact, $p_K$ is indeed a (weak) (trivial) Hurewicz fibration
  (Proposition \ref{P:uniformization}), hence also  a  (weak) (trivial) Serre fibration.
 \end{proof}

 \begin{cor}{\label{C:weakSerre}}
  Let $\X$ be a topological stack and $\varphi \: X \to \X$ a classifying space for $\X$ 
  as in Theorem \ref{T:nice}. Then $\varphi$ is a weak trivial Serre fibration. 
 \end{cor}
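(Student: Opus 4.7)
The plan is to simply chain together the three previously-established results. The key observation is that $\varphi \: X \to \X$ is automatically a representable morphism, since it is noted in $\S\ref{SS:topological}$ that every morphism from a topological space to a topological stack is representable. This places us squarely in the setting where the earlier representability-dependent results apply.

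First, I would invoke Theorem \ref{T:nice}, which is where the hypothesis actually lives: by definition a classifying space atlas is locally shrinkable. Next I would apply Corollary \ref{C:strong} to the representable locally shrinkable map $\varphi$ to conclude that $\varphi$ is \emph{locally} a weak trivial Hurewicz fibration. Finally, I would apply Proposition \ref{P:strong2} (with the ``weak trivial'' adjectives present) to upgrade ``locally weak trivial Hurewicz fibration'' to ``weak trivial Serre fibration,'' which is exactly the conclusion desired.

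There is no real obstacle; the corollary is advertising a concatenation of Theorem \ref{T:nice}, Corollary \ref{C:strong}, and Proposition \ref{P:strong2}. The only subtlety worth checking is that the ``trivial'' flavor is preserved along the chain: Corollary \ref{C:strong} explicitly produces ``weak trivial Hurewicz fibration'' locally, and Proposition \ref{P:strong2} is stated with the parenthetical ``(trivial)'' included, so the passage from local Hurewicz to global Serre retains triviality. Thus the proof is essentially a one-line citation and no further constructions are needed.
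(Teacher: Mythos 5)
Your argument is correct and is exactly the intended one: the paper states the corollary immediately after Proposition \ref{P:strong2} precisely because it follows by chaining Theorem \ref{T:nice} (locally shrinkable), Corollary \ref{C:strong} (locally a weak trivial Hurewicz fibration, using that $\varphi$ is representable since its source is a space), and Proposition \ref{P:strong2}. Your attention to representability and to preserving the ``trivial'' adjective matches the paper's setup, so nothing further is needed.
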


 \begin{lem}{\label{L:compose3}} 
  Let $p \: X \to \Y$ and $q \: \Y \to \Z$ be
  morphisms of topological stacks, with $X$  a topological space. 
  Suppose that $p$ is locally shrinkable and $q\circ p$ is a weak (trivial)
  Serre fibration. Then $q$ is a weak (trivial)
  Serre fibration.
 \end{lem}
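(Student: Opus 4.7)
The plan is to combine Lemma \ref{L:Dold2} and Lemma \ref{L:compose1}: the former will provide an LLP hypothesis for $p$, the latter will then let us transfer the (W)LLP from $q\circ p$ to $q$.

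Concretely, to show $q$ is a weak Serre fibration I must verify that for every finite CW complex $\A$ the inclusion $i\:\A\hra\A\times I$ has WLLP with respect to $q$. By hypothesis $q\circ p$ is a weak Serre fibration, so this same $i$ already has WLLP with respect to $q\circ p$. By Lemma \ref{L:compose1}, it will then suffice to check that the map $\emptyset\to\A$ has LLP with respect to $p$. But $X$ is a topological space, so $p\:X\to\Y$ is representable; being also locally shrinkable, Lemma \ref{L:Dold2} gives LLP of $\emptyset\to\A$ with respect to $p$ for every paracompact $\A$, and finite CW complexes are paracompact. Applying Lemma \ref{L:compose1} closes the argument.

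The trivial case is identical in spirit, only the test inclusion changes. For $q$ to be a weak trivial Serre fibration, the relevant lifting problems are for cellular inclusions $i\:A\to B$ of CW complexes. WLLP of such $i$ with respect to $q\circ p$ is given by hypothesis, and LLP of $\emptyset\to A$ with respect to $p$ again follows from Lemma \ref{L:Dold2} since CW complexes are paracompact. So Lemma \ref{L:compose1} transfers (W)LLP from $q\circ p$ to $q$ as before.

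I do not expect any real obstacle: the content of the lemma is entirely encoded in Lemmas \ref{L:Dold2} and \ref{L:compose1}, plus the standard fact that (finite) CW complexes are paracompact. The only subtle point worth flagging is that in the ``trivial'' version of the statement, the notion of weak trivial Serre fibration presupposes representability of the target map (Definition \ref{D:trivial}); this is not an issue for $q\circ p$ (whose source $X$ is a space) and is implicit in the statement for $q$.
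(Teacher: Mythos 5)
Your argument is exactly the paper's: the proof there consists of the single line ``Follows from Lemmas \ref{L:compose1} and \ref{L:Dold2}'', and you have correctly supplied the details — representability of $p$ (since $X$ is a space), paracompactness of the (finite) CW sources giving LLP of $\emptyset\to\A$ via Lemma \ref{L:Dold2}, and then Lemma \ref{L:compose1} to transfer the (W)LLP of the test inclusions from $q\circ p$ to $q$. Your remark about the representability clause in Definition \ref{D:trivial} is a fair observation but does not affect the correctness of the argument.
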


 \begin{proof}
  Follows from Lemmas \ref{L:compose1} and \ref{L:Dold2}.
 \end{proof}

We now come to the main result of this section. It strengthens Proposition  
\ref{P:strong2} by removing the representability condition on $p$, at the cost of
having to add the adjective `weak'. 
 
 \begin{thm}{\label{T:strong}}
   Let $p \: \X \to \Y$ be an arbitrary morphism of topological stacks.
   If $p$ is   locally a  weak (trivial) Hurewicz fibration then it is a
   weak (trivial) Serre fibration.
 \end{thm}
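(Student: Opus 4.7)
The plan is to use the classifying-space construction to reduce from the arbitrary morphism $p$ to a representable morphism, and then to invoke Proposition~\ref{P:strong2}. By Theorem~\ref{T:nice}, we fix a classifying-space atlas $\varphi \: X \to \X$, which is locally shrinkable; by Corollary~\ref{C:weakSerre} it is in particular a weak trivial Serre fibration. Since $X$ is a topological space, the composite $q := p\circ\varphi$ is a representable morphism $q \: X \to \Y$. By Lemma~\ref{L:compose3}, once we show that $q$ is a weak (trivial) Serre fibration, the locally shrinkable $\varphi$ transfers the conclusion to $p$, so it suffices to treat $q$.

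By Lemma~\ref{L:basechange1} we may verify the weak Serre fibration property of $q$ after base change along any map $L\to\Y$ from a finite CW complex, so we assume $\Y=L$ is a finite CW complex. Then $p\:\X\to L$ is still locally a weak (trivial) Hurewicz fibration, and the paracompactness of $L$ lets us find an open cover $\{U_i\}$ of $L$ on which each $p|_{U_i}$ is a weak (trivial) Hurewicz fibration. The restriction $\varphi|_{U_i}$ is still locally shrinkable (Lemma~\ref{L:locallyshrinkable1}) and representable, hence locally a weak trivial Hurewicz fibration by Corollary~\ref{C:strong}. Pulling back once more along the epimorphism of $\X|_{U_i}$ on which $\varphi|_{U_i}$ becomes a weak trivial Hurewicz fibration, the pulled-back $q|_{U_i}$ is then a composition of two weak (trivial) Hurewicz fibrations, and hence a weak (trivial) Hurewicz fibration by Lemma~\ref{L:compose2}. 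Hurewicz Uniformization (Proposition~\ref{P:uniformization}) applied over the paracompact $U_i$ promotes $q|_{U_i}$ to a weak (trivial) Hurewicz fibration, and applied a second time over the paracompact base $L$ this gives $q$ globally as a weak (trivial) Hurewicz fibration, and in particular as a weak (trivial) Serre fibration.

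The main obstacle we expect to face is the step of coordinating the two distinct ``local'' structures -- the local Hurewicz-fibration structure of $p$ coming from an epimorphism over $\Y$, and the local shrinkability of $\varphi$ coming from an epimorphism over $\X$ -- into a single open cover of the paracompact base $L$ on which $q$ is a weak (trivial) Hurewicz fibration. The crucial technical inputs making this work are the base-change invariance of both the weak Hurewicz-fibration property (Lemma~\ref{L:basechange1}) and local shrinkability (Lemma~\ref{L:locallyshrinkable1}), together with Hurewicz Uniformization (Proposition~\ref{P:uniformization}) over a paracompact base to promote the local structure to a global one.
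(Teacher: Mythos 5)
Your overall skeleton is the paper's: choose a classifying space $\varphi\:X\to\X$ (Theorem \ref{T:nice}), reduce via Lemma \ref{L:compose3} to showing that the representable composite $q=p\circ\varphi$ is a weak (trivial) Serre fibration, and handle $q$ by base change to a finite CW complex and Dold-type uniformization -- this last part is exactly what Proposition \ref{P:strong2} packages, and the paper's own proof simply asserts (via Lemma \ref{L:compose2}) that $p\circ\varphi$ is locally a weak (trivial) Hurewicz fibration and then quotes Proposition \ref{P:strong2}. Your reduction up to the choice of the open cover $\{U_i\}$ of $L$ with $p|_{U_i}$ a weak (trivial) Hurewicz fibration is fine.

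The gap is precisely the step you yourself flag as the main obstacle, and your proposed mechanism does not close it. Pulling back along the epimorphism $\V\to\X|_{U_i}$ over which $\varphi|_{U_i}$ becomes a weak trivial Hurewicz fibration is a base extension of $\varphi|_{U_i}$, not of $q|_{U_i}$: the target of $q|_{U_i}$ is $U_i$, and Definition \ref{D:locally} (hence Proposition \ref{P:uniformization}) requires a localization over an epimorphism, equivalently an open cover, of the base $U_i$, not over the middle stack $\X|_{U_i}$. Moreover, after that pullback the two maps you propose to compose are $X\times_{\X|_{U_i}}\V\to\V$ and $p|_{U_i}\:\X|_{U_i}\to U_i$; they are joined only by the epimorphism $\V\to\X|_{U_i}$, which is not known to be a fibration of any kind, so Lemma \ref{L:compose2} does not apply and nothing about $q|_{U_i}$ follows. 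Consequently the hypothesis needed for your first application of Proposition \ref{P:uniformization} (that $q|_{U_i}$ be, locally over $U_i$, a weak Hurewicz fibration) is never verified, and the second application over $L$ inherits the same problem. A correct treatment of the composite over $U_i$ works at the level of lifting problems rather than pullbacks, as in the proof of Lemma \ref{L:compose0}: for a finite CW test space $A$, first solve the lifting against the weak Hurewicz fibration $p|_{U_i}$; the remaining lifting problem against $\varphi|_{U_i}$ has bottom map defined on $A\times I$, so by Lemma \ref{L:lifting} it is equivalent to a lifting problem against the base extension of $\varphi|_{U_i}$ along $A\times I\to\X|_{U_i}$, which is a locally shrinkable map of topological spaces over the paracompact $A\times I$, hence shrinkable (Lemma \ref{L:Dold2}) and therefore has WLLP (Proposition \ref{P:weaktrivial}); one then concatenates the fiberwise homotopies by Lemma \ref{L:glue}. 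This argument (or the paper's shortcut through Lemma \ref{L:compose2}, Proposition \ref{P:strong2} and Lemma \ref{L:compose3}) is what is missing from your proposal.
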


 \begin{proof}
  Let $\varphi \: X \to \X$ be a classifying space for $\X$ as in Theorem \ref{T:nice}.
  Since $\varphi$ is locally shrinkable, it is  locally a weak trivial Hurewicz fibration 
  (Proposition \ref{P:weaktrivial}). By Lemma \ref{L:compose2}, $p\circ\varphi$ is also 
  locally a weak (trivial) Hurewicz fibration. By Proposition \ref{P:strong2}, $p\circ\varphi$
  is a weak (trivial) Serre fibration. The claim follows from Lemma \ref{L:compose3}.
 \end{proof}

\subsection{Lifting property of fibrations with respect to cofibrations}{\label{SS:lifting}}

 \begin{prop}{\label{P:lifting}}
  Let $p \: \X \to \Y$ be a morphism of topological stacks. Suppose that $p$ is 
  a Hurewicz (respectively, Serre) morphism (Definition \ref{D:HurSerre2}). 
  If $p$  is a Hurewicz (respectively, Serre)
  fibration, then every trivial cofibration (respectively, 
  cellular inclusion of finite CW complexes
  inducing isomorphisms on all $\pi_n$)
  $i \: A \to B$ has LLP with respect to $p$. Here, by a trivial cofibration we mean
  a DR pair $(B,A)$ of   topological spaces in the sense of \cite{Whitehead}.
 \end{prop}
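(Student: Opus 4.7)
My plan is to exploit the DR-pair structure to reduce the lifting problem to a covering-homotopy problem for $p$, which is solvable by hypothesis.

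Unpack the DR pair $(B, A)$ (in Whitehead's sense) as a retraction $r \: B \to A$ together with a homotopy $D \: I \times B \to B$ satisfying $D_0 = \id_B$, $D_1 = i \circ r$, and $D(t, a) = a$ for all $a \in A$, $t \in I$. In the Serre case, a cellular inclusion of finite CW complexes inducing isomorphisms on all $\pi_n$ is, by Whitehead's theorem, a homotopy equivalence; since cellular inclusions are Hurewicz cofibrations, $(B, A)$ is then a DR pair, and the same data is available.

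Given the lifting problem $(f, g, \alpha)$, form the initial candidate $k_0 := f \circ r$ and the homotopy $K \: I \times B \to \Y$, $K(t, b) := g(D(1 - t, b))$, so that $K_1 = g$ and $K_0 = g \circ i \circ r$ is identified with $p \circ k_0$ via $\alpha \circ r$. Apply the CHP of $p$ (available since $p$ is a Hurewicz, respectively Serre, fibration and $B$ is a CG space, respectively a finite CW complex) to lift $K$ to $\tilde K \: I \times B \to \X$ starting from $k_0$. Set $k := \tilde K|_{t = 1}$ and take $\beta \: p \circ k \Ra g$ from the CHP data. To produce the ghost homotopy $\Phi \: f \Ra k \circ i$, restrict $\tilde K$ to $I \times A$. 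Since $D$ is stationary on $A$, the homotopy $K|_{I \times A}$ is the constant homotopy at $g \circ i$; hence $\tilde K|_{I \times A}$ lifts a constant homotopy in $\Y$, and its boundary data yield a 2-morphism $\Phi \: f \Ra k \circ i$ (using $r \circ i = \id_A$ to identify $\tilde K_0 \circ i$ with $f$). A direct computation of 2-morphisms shows $(p \circ \Phi)(\beta \circ i) = \alpha$, verifying LLP in the sense of Definition \ref{D:lifting}.

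The main technical obstacle is the extraction of a strict 2-morphism $\Phi$ from the homotopy $\tilde K|_{I \times A}$, whose projection to $\Y$ is a priori only 2-isomorphic to a constant homotopy. This is precisely where the hypothesis that $p$ is a Hurewicz (respectively Serre) \emph{morphism} in the sense of Definition \ref{D:HurSerre2} is used: the local fibration structure on the stack-fibers of $p$ allows one to convert a path in a fiber into a genuine 2-morphism, using the equivalence of the two descriptions of ghost homotopy in Definition \ref{D:relhtpy}. Over ordinary topological spaces, where all relevant 2-morphisms are identities, the argument reduces immediately to the classical DR-pair lifting theorem of \cite{Whitehead}.
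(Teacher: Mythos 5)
Your construction --- lifting $K(t,b)=g(D(1-t,b))$ by the CHP starting from $k_0=f\circ r$ --- only produces a \emph{fiberwise} homotopy between $f$ and $k\circ i$, i.e.\ it proves WLLP, not the LLP claimed in the proposition. Restricting $\tilde K$ to $I\times A$ gives a homotopy in $\X$ whose projection to $\Y$ is (2-isomorphic to) the constant homotopy at $g\circ i$; by Definition \ref{D:relhtpy} that is exactly a homotopy relative to the given 2-morphism, whereas a ghost homotopy requires the homotopy \emph{in $\X$ itself} to be 2-isomorphic to a constant one (to factor through $\pr_2$), which $\tilde K|_{I\times A}$ has no reason to do. The gap is already visible for honest topological spaces: there your argument yields $k$ with $p\circ k=g$ and $k\circ i$ merely \emph{vertically homotopic} to $f$, while LLP demands $k\circ i=f$ on the nose (all 2-morphisms being identities). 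Passing from the vertical homotopy to a lift that is strict over $A$ is precisely the content of Whitehead's Theorem 7.16, whose proof uses the DR/NDR data (the function $u\colon B\to I$ with $A=u^{-1}(0)$ and a reparametrized, relative lifting) rather than the plain CHP; so your closing remark that over spaces ``the argument reduces immediately'' to the classical theorem has the logic backwards --- the classical theorem is the part your argument does not reproduce.

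The step you flag as the ``main technical obstacle'' is therefore the whole theorem, and the proposed way of filling it does not work: the hypothesis that $p$ is a Hurewicz (resp.\ Serre) \emph{morphism} does not allow one to ``convert a path in a fiber into a genuine 2-morphism''; fiberwise paths and 2-morphisms are genuinely different even for quotient stacks, and no such conversion exists already when $\X$ and $\Y$ are spaces. In the paper the hypothesis enters quite differently: one first base-extends along maps from spaces (Lemmas \ref{L:lifting} and \ref{L:basechange1}) to reduce to the case where $\Y=Y$ is a topological space, whereupon Definition \ref{D:HurSerre2} guarantees that the total stack $\X$ is a Hurewicz (resp.\ Serre) stack; this is what makes gluing of maps and homotopies along closed cofibrations available (Proposition \ref{P:pushout1}), so that the classical proof of Whitehead's Theorem 7.16 --- with its strictification over $A$ via the DR-pair data --- can be repeated verbatim with stack target. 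To repair your argument you would have to carry out that relative, reparametrized lifting (e.g.\ solve the lifting problem with the prescribed lift on $(I\times A)\cup(\{0\}\times B)$, constant along $I\times A$), not merely apply the absolute CHP and then hope to upgrade the resulting vertical homotopy.
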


 \begin{proof}
  Using the usual base extension trick, we may assume that $\Y=Y$ is a topological space and
  $\X$ is a Hurewicz (respectively, Serre) topological stack. The proof of Theorem 7.16 of 
  \cite{Whitehead} now applies.
 \end{proof}

 \begin{prop}{\label{P:fibration}}
   Let $p \: \X \to \Y$ be a  morphism of topological stacks.
   Suppose that $p$ is 
   a Hurewicz (respectively, Serre) morphism (Definition \ref{D:HurSerre2})
   and a  Hurewicz (respectively, Serre) fibration. Then, 
   for every cofibration $i \: A \to B$ of  
   topological spaces, every homotopy lifting extension problem
         $$\xymatrix@R=12pt@C=10pt@M=6pt{
               (I\times A) \cup (\{0\}\times B)   \ar[rr]^(0.7)f\ar@{^(->}[dd] & 
                     \ar@/^/@{=>}[dl]_{\alpha}  &   \X  \ar[dd]^{p}  \\ & & \\
                     I\times B   \ar[rr]_g  \ar@{..>}[uurr]_h       & &  \Y    }$$
   has a solution  $h \: I \times B \to \X$.  
 \end{prop}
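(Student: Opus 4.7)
The plan is to recast the homotopy lifting extension problem as an ordinary lifting problem and then invoke Proposition \ref{P:lifting}. Let $j \: (I \times A) \cup (\{0\} \times B) \hookrightarrow I \times B$ denote the natural inclusion. The homotopy lifting extension data $(f,g,\alpha)$ is exactly the data of a 2-commutative square of the form appearing in Definition \ref{D:lifting}, with $j$ in the role of $i$. So it is enough to show that $j$ has LLP with respect to $p$, and for that it suffices, by Proposition \ref{P:lifting}, to verify that $j$ belongs to the class of maps handled there.

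In the Hurewicz case, I would invoke the classical fact that when $i \: A \to B$ is a cofibration of topological spaces, the pair $\bigl(I \times B,\,(I \times A) \cup (\{0\} \times B)\bigr)$ is a DR pair in the sense of \cite{Whitehead}: the retraction $I \times B \to (\{0\} \times B) \cup (I \times A)$ coming from the cofibrancy of $i$ combines with the linear deformation in the $I$-direction to give a strong deformation retraction of $I \times B$ onto the subspace. Hence $j$ is a trivial cofibration, and Proposition \ref{P:lifting} applies. In the Serre case, when $i$ is a cellular inclusion of (finite) CW complexes, the standard CW structure on $I$ equips $I \times B$ with a CW structure for which $j$ is a cellular inclusion, and the same deformation retraction shows that $j$ is a homotopy equivalence; in particular, it induces isomorphisms on all $\pi_n$. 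Again Proposition \ref{P:lifting} applies.

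Feeding the square $(f,g,\alpha)$ through Proposition \ref{P:lifting} with $j$ in place of $i$ produces the desired $h \: I \times B \to \X$, together with a 2-morphism $\beta \: p \circ h \Rightarrow g$ and a ghost homotopy witnessing $h \circ j \cong f$ compatibly with $\alpha$. The main obstacle I anticipate is purely bookkeeping: one must check that the classical DR-pair argument transports cleanly to the stacky formulation of LLP in Definition \ref{D:lifting}, and that the ghost homotopy and $\beta$ delivered by Proposition \ref{P:lifting} recombine with $\alpha$ in the manner implicitly required by the statement of the homotopy lifting extension problem. Once these compatibilities are chased, the proof reduces entirely to the classical fact about DR pairs plus a citation of Proposition \ref{P:lifting}.
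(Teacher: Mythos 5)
Your argument is essentially the paper's own proof: the paper simply says to use Proposition \ref{P:lifting} and repeat the proof of Whitehead's Theorem 7.16, which is exactly your reduction of the homotopy lifting extension problem to the LLP of the inclusion $j$ of the DR pair $\bigl(I\times B,\,(I\times A)\cup(\{0\}\times B)\bigr)$ (respectively, of the corresponding cellular inclusion of finite CW complexes inducing isomorphisms on all $\pi_n$). Your restriction in the Serre case to cellular inclusions is also what the paper's own proof actually delivers, the blanket phrase ``every cofibration'' in the statement notwithstanding.
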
  

 \begin{proof}
   Use Proposition \ref{P:lifting} and repeat the proof of \cite{Whitehead}, Theorem 7.16.
 \end{proof}

 \begin{cor}{\label{C:weakTriv}}
  Let $p \: \X \to \Y$ be a
  Hurewicz (respectively, Serre)   morphism of topological stacks. If $p$ is 
  a  Hurewicz (respectively, Serre) fibration and a weak trivial 
  Hurewicz (respectively, Serre)  fibration (Definition \ref{D:trivial}), then it is a trivial 
  Hurewicz (respectively, Serre)  fibration.
 \end{cor}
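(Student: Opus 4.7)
The plan is to upgrade weak lifts to strict ones via the homotopy lifting extension property (HLEP) from Proposition \ref{P:fibration}, which applies precisely because $p$ is a Hurewicz (respectively, Serre) morphism and a Hurewicz (respectively, Serre) fibration. Concretely, I would start with a cofibration $i \: A \to B$ of topological spaces and a 2-commutative square with data $f \: A \to \X$, $g \: B \to \Y$, $\alpha \: p \circ f \Ra g \circ i$. The weak trivial fibration hypothesis (via WLLP) yields $k \: B \to \X$, a 2-morphism $\beta \: p \circ k \Ra g$, and a fiberwise homotopy $H \: I \times A \to \X$ from $f$ to $k \circ i$ relative to $\beta \circ i$. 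The homotopy $H$ is the obstruction to $k$ being a strict lift, and the goal is to absorb it into a corrected map $k'$.

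To do this, I would reverse $H$, writing $\bar H(t,a) := H(1-t,a)$, and glue it with $k$ to define a partial map on $(I \times A) \cup (\{0\} \times B) \subset I \times B$: take $k$ on $\{0\} \times B$ and $\bar H$ on $I \times A$; these agree on $\{0\} \times A$ since $\bar H(0,-) = k \circ i$. The natural bottom map for the HLEP is the constant $g^*(t,b) := p \circ k(b)$, and since $p \circ H$ is a ghost homotopy, $p \circ \bar H$ is 2-isomorphic to the constant homotopy at $p \circ k \circ i$, providing the required 2-morphism $\alpha^* \: p \circ \bar H \Ra g^* \circ (I \times i)$. Applying Proposition \ref{P:fibration} produces a lift $h \: I \times B \to \X$. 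Setting $k' := h|_{\{1\} \times B}$, the endpoint $\bar H(1,-) = f$ yields a 2-iso $\Phi \: f \Ra k' \circ i$, while $p \circ h = g^*$ gives $p \circ k' = p \circ k$ up to 2-iso, and composing with $\beta$ produces the 2-morphism $p \circ k' \Ra g$. These data constitute a strict lift in the sense of LLP.

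The main obstacle I anticipate is the book-keeping needed to verify the compatibility condition $(p \circ \Phi)(\beta \circ i) = \alpha$ of Definition \ref{D:lifting}. This reduces to tracking how the 2-isomorphisms provided by Proposition \ref{P:fibration} interact with $\alpha$, $\beta$, and the ghost homotopy structure of $p \circ H$ (which satisfies $(p \circ H)(\beta \circ i) = \alpha$ by the defining property of the WLLP solution). Given the explicit description of the ghost homotopy $p \circ H$ as a 2-iso to a constant, the verification should be routine, but it is the one point in the argument requiring care.
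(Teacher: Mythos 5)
Your proof is correct and follows essentially the same route as the paper: obtain a weak lift $k$ together with a fiberwise homotopy $H$ from the weak trivial fibration hypothesis, then apply the homotopy lifting extension property of Proposition \ref{P:fibration} to the map $H\cup k$ on $(I\times A)\cup(\{0\}\times B)\subset I\times B$ and take the restriction of the resulting lift at $t=1$. The only cosmetic differences are your explicit reversal of $H$ and your use of the constant bottom map $p\circ k\circ\pr_2$ in place of the paper's $g\circ\pr_2$ (these agree up to the 2-morphism $\beta$), which changes nothing.
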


 \begin{proof}
  Consider a lifting problem
      $$\xymatrix@=8pt@M=6pt{
         A   \ar[rr]^f\ar[dd]_{i}   & \ar@{=>}[dl] &
              \X  \ar[dd]^{p}  \\ & & \\
         B  \ar@{..>}[uurr]_k  \ar[rr]_g
        & &  \Y    }$$
  Since $p$ is a weak trivial fibration, there is a lift $k \:   B \to \X$ together 
  with a  fiberwise homotopy $H$ from   $f':=k\circ i$  to $f$. By Proposition
  \ref{P:fibration}, we can solve the homotopy lifting extension problem
         $$\xymatrix@R=12pt@C=10pt@M=6pt{
               (I\times A) \cup (\{0\}\times B)   \ar[rr]^(0.7){H\cup k}\ar[dd] & 
                     \ar@/^/@{=>}[dl] &   \X  \ar[dd]^{p}  \\ & & \\
                     I\times B   \ar[rr]_{g\circ\pr_2}  \ar@{..>}[uurr]_h       & &  \Y    }$$  
  Letting  $f:=h|_{\{1\}\times B}$ we find a solution to our original lifting problem.        
 \end{proof}

\section{Examples of fibrations}{\label{S:Examples}}

In this section we discuss a few general classes of examples of fibrations of stacks.
Of course, a trivial class of examples is the ones coming from topological spaces,
because any notion of fibration we have defined for morphisms of stacks $f \: \X \to \Y$
coincides with the corresponding classical notion when $\X$ and $\Y$ are topological spaces. 

Another tirival class of examples is the projections maps.

 \begin{lem}{\label{L:projection}}
  Let $\X$ and $\Y$ be arbitrary stacks. Then, the projection $\X \times \Y \to \X$ is
  a Hurewicz fibration.
 \end{lem}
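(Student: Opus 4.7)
The plan is to reduce the lifting problem directly by using the universal property of the 2-categorical product and then write the lift explicitly, with no homotopy-theoretic input required.

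By Definition \ref{D:fibration} and Definition \ref{D:WCHP}, it suffices to show that for every compactly generated topological space $\A$, the inclusion $i\:\A\to \A\times I$, $a\mapsto(a,0)$, has LLP with respect to the projection $p\:\X\times\Y\to \X$. So I would start with an arbitrary 2-commutative square consisting of $f\:\A\to\X\times\Y$, $g\:\A\times I\to \X$, and a 2-morphism $\alpha\:p\circ f\Ra g\circ i$, and look for a $k\:\A\times I\to\X\times\Y$, a 2-morphism $\beta\:p\circ k\Ra g$, and a ghost homotopy $H\:f\Ra k\circ i$ with $(p\circ H)(\beta\circ i)=\alpha$.

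The key observation is that by the 2-universal property of the product, giving $k$ is the same as giving its two components $k_{\X}=p\circ k$ and $k_{\Y}=q\circ k$, where $q\:\X\times\Y\to\Y$ is the second projection; similarly 2-morphisms into $\X\times\Y$ are pairs of 2-morphisms into $\X$ and $\Y$. I would therefore set
\[
k_{\X}:=g,\qquad k_{\Y}:=q\circ f\circ \pr_1,
\]
where $\pr_1\:\A\times I\to\A$ is the first projection, and take $\beta\:p\circ k=g\Ra g$ to be the identity 2-morphism. Then $k\circ i$ has components $(g\circ i,\,q\circ f)$, which differ from the components $(p\circ f,\,q\circ f)$ of $f$ only in the $\X$-factor, where they are related by $\alpha$. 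I would then define $H\:f\Ra k\circ i$ to be the 2-morphism whose $\X$-component is $\alpha$ and whose $\Y$-component is the identity $\id\:q\circ f\Ra q\circ f$; this is a ghost homotopy by construction, and the compatibility $(p\circ H)(\beta\circ i)=\alpha\cdot\id=\alpha$ is immediate.

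There is really no obstacle here: since we are producing a \emph{ghost} homotopy, no deformation argument is needed, and the compactly generated hypothesis on $\A$ is not used. In fact the proof shows the stronger statement that $p$ has CHP for all stacks $\A$, and likewise that the projection has LLP with respect to every $i\:\A\to\B$ admitting a retraction, but for the lemma as stated only the displayed verification is required.
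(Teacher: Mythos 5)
Your argument is correct and is exactly the routine verification the paper has in mind when it dismisses this lemma as ``Trivial'': the lift $k=(g,\,q\circ f\circ\pr_1)$, with $\be$ the canonical identification $p\circ k\cong g$ and the ghost homotopy given componentwise by $(\alpha,\id)$, solves the lifting problem for $i\:\A\to\A\times I$ on the nose. Your further remarks (no deformation needed, the compactly generated hypothesis on $\A$ is irrelevant, and the same construction works for any $i$ admitting a retraction) are also accurate.
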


 \begin{proof}
  Trivial.
 \end{proof}

\subsection{Fibrations induced by mapping stacks}{\label{SS:mappingfiberations}}

 \begin{prop}{\label{P:Whitehead1}}
    Let $A \to B$ be a cofibration of  topological spaces, and let
    $\X$ be a  stack. Then the induced morphism 
        $$\Map(B,\X) \to \Map(A,\X)$$
    is a Hurewicz fibration.
 \end{prop}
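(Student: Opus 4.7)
The plan is to verify the covering homotopy property for $p$ directly against any compactly generated space $T$. By the exponential property of mapping stacks, a lifting problem for $p$ against the inclusion $i_0 \: T \to T \times I$ unwinds to the following data: maps $\bar u \: T \times B \to \X$ and $\bar h \: T \times I \times A \to \X$ together with a 2-isomorphism $\bar\alpha$ between the restrictions of $\bar u$ and $\bar h$ to $T \times \{0\} \times A$ (via $i$ and via $t=0$, respectively). The required lift is a map $\bar k \: T \times I \times B \to \X$ whose restrictions to $T \times \{0\} \times B$ and $T \times I \times A$ recover $\bar u$ and $\bar h$ up to compatible 2-isomorphisms. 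Since $i \: A \hookrightarrow B$ is a cofibration, the closed subspace $C := (\{0\} \times B) \cup (I \times A)$ is a strong deformation retract of $I \times B$, and one has a retraction $r \: I \times B \to C$ together with a deformation $R \: J \times I \times B \to I \times B$ from $\id$ to $r$.

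If $\X$ were a topological space, one would glue $\bar u$ and $\bar h$ along $T \times \{0\} \times A$ to produce a map on $T \times C$ and precompose with $\id_T \times r$. For a general stack this gluing along a closed subspace is not available ($\S$\ref{SS:pushout}); the key move is to apply Proposition~\ref{P:pushout2} to re-express the triple $(\bar u, \bar h, \bar\alpha)$ as a map $\tilde f \: (T \times B) \bar{\vee}_{T \times A} (T \times I \times A) \to \X$ from the fattened pushout that is constant along the inserted tube $T \times A \times J$. It then suffices to build a continuous $\Phi \: I \times B \to B \bar{\vee}_A (I \times A)$ so that $\tilde f \circ (\id_T \times \Phi)$ is the desired $\bar k$. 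Such a $\Phi$ is constructed from $r$ and $R$: points whose trajectory under $R$ enters $C$ via $\{0\} \times B \setminus A$ (respectively via $I \times A \setminus \{0\}$) are routed to the $B$-end (respectively the $(I \times A)$-end) of the fattened pushout, and points whose trajectories approach $\{0\} \times A$ are sent through the tube, with the $J$-coordinate recording when they first reach $C$.

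The main obstacle is arranging for $\Phi$ to be continuous across the seam where trajectories meet $\{0\} \times A$. I would handle this by taking the retraction coming from an explicit Str\o m structure $(\varphi, H)$ for the cofibration $i$, so that the entry-time function is given by a continuous rescaling involving $\varphi$. A final routine verification confirms that the resulting $\bar k$ recovers $\bar u$ and $\bar h$ compatibly with the 2-isomorphism $\bar\alpha$, as prescribed by the fattened-pushout structure.
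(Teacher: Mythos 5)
Your unwinding of the lifting problem via the exponential property is correct, and you have put your finger on the genuinely delicate point (one cannot simply glue $\bar{u}$ and $\bar{h}$ along the closed seam $T\times A$), but the proposed repair cannot prove the statement: the discontinuity at the seam that you flag is not a defect of a particular retraction, to be engineered away with a Str\o m structure; it is forced by the boundary conditions, for every choice of $\Phi$. Recall that in Definitions \ref{D:lifting}, \ref{D:WCHP} and \ref{D:fibration} the lift $\bar{k}\: T\times I\times B\to\X$ must come with genuine 2-isomorphisms $\bar{k}|_{T\times I\times A}\Rightarrow\bar{h}$ and $\bar{u}\Rightarrow\bar{k}|_{T\times\{0\}\times B}$ (compatible with $\bar{\alpha}$), not fiberwise homotopies. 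Test your scheme on a space-valued problem, where 2-isomorphic means equal: take $T=*$, $\X=B\vee_A(I\times A)$, and let $\bar{u},\bar{h}$ be the canonical maps, so that the map $\tilde{f}$ furnished by Proposition \ref{P:pushout2} is the tube-collapsing map $q\: B\bar{\vee}_A(I\times A)\to B\vee_A(I\times A)$. Since $q$ is injective away from the tube, the two strictness conditions force $\Phi(t,a)=(t,a)$ in the $(I\times A)$-end for all $t>0$, $a\in A$, and $\Phi(0,b)=b$ in the $B$-end for all $b\notin A$. Already for $A=\{0\}\subset B=[0,1]$ the limits of these two families at the point $(0,0)$ are the two distinct ends of the inserted tube, so no continuous $\Phi$ can satisfy both, and no lift of the form $\tilde{f}\circ(\id_T\times\Phi)$ has the required restrictions. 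Note moreover that strictness on the face $T\times I\times A$ is demanded even by the weak lifting property (the 2-morphism $\be$ is present already in WLLP), so the only face you may relax is $T\times\{0\}\times B$; doing so, a version of your construction does go through, but it yields a lift agreeing with $\bar{u}$ only up to a fiberwise homotopy, i.e.\ it proves that $\Map(B,\X)\to\Map(A,\X)$ is a \emph{weak} Hurewicz fibration, which is weaker than what the Proposition asserts.

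This is also where you diverge from the paper, whose proof is Whitehead's Theorem 7.8 run verbatim: glue $\bar{u}$ and $\bar{h}$ to a single map on $T\times\bigl((\{0\}\times B)\cup(I\times A)\bigr)$ and compose with $\id_T\times\rho$, where $\rho$ is the retraction of $I\times B$ onto $(\{0\}\times B)\cup(I\times A)$ coming from the cofibration; the restrictions of the resulting lift are then literally $\bar{u}$ and $\bar{h}$, as the strict definition requires. The seam inclusions $A\hookrightarrow B$ and $A\times\{0\}\hookrightarrow A\times I$ are cofibrations, so the gluing is available, for instance, from Proposition \ref{P:pushout1} when $\X$ is a Hurewicz topological stack (Definition \ref{D:HurSerre1}); for a completely arbitrary stack $\X$ the honest gluing along the seam is exactly the step that needs justification, and substituting Proposition \ref{P:pushout2} for it, as you do, is precisely what degrades the conclusion from the covering homotopy property to its weak variant.
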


 \begin{proof}
  This is Theorem 7.8 of \cite{Whitehead}. The same proof works.
 \end{proof}

 \begin{ex}{\label{E:fib}}
  Let $L\X$ and $P\X$ be the loop and path stacks of a stack $\X$. 
  Then, the time $t$ evaluation maps $\ev_t \: L\X \to \X$ and $\ev_t \: P\X \to \X$ are Hurewicz 
  fibrations. Also, the map $(\ev_0,\ev_1) \: P\X \to \X\times\X$ is a Hurewicz fibration.
 \end{ex}

\begin{prop}{\label{P:Whitehead2}}
  Let $p \: \X \to \Y$ be a (weak) (trivial) Hurewicz (respectively, Serre) fibration of stacks. 
  Then, for any topological space (respectively, CW complex) $Z$  the induced map
       $$\Map(Z,\X) \to \Map(Z,\Y)$$
  is a (weak) (trivial) Hurewicz (respectively, Serre) fibration. 
\end{prop}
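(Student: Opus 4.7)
The plan is to convert every relevant lifting problem against $p_* \colon \Map(Z,\X) \to \Map(Z,\Y)$ into a lifting problem against $p$ itself by applying the exponential adjunction $\Map(\B,\Map(Z,\X)) \simeq \Map(\B \times Z, \X)$, which is natural in both variables and respects $2$-morphisms. Under this adjunction, a (weak) lifting problem
\[
\xymatrix@M=6pt{ \A \ar[r]\ar[d]_i & \Map(Z,\X) \ar[d]^{p_*} \\ \B \ar[r] & \Map(Z,\Y) }
\]
corresponds to a (weak) lifting problem
\[
\xymatrix@M=6pt{ \A \times Z \ar[r]\ar[d]_{i\times \id_Z} & \X \ar[d]^{p} \\ \B \times Z \ar[r] & \Y }
\]
and any solution (together with the fiberwise homotopy, in the weak case) of the bottom square transports back to a solution of the top one by reapplying the adjunction.

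For the Hurewicz case, I would take $i \colon \A \hookrightarrow \A \times I$ for an arbitrary compactly generated space $\A$, so the translated problem is the inclusion $(\A \times Z) \times \{0\} \hookrightarrow (\A \times Z) \times I$. Since we work in $\sfT = \CGTop$, $\A \times Z$ is again compactly generated, so CHP (respectively WCHP) for $p$ supplies the required lift. The (weak) trivial Hurewicz case is identical, noting that $i \times \id_Z$ is a cofibration whenever $i$ is, because cofibrations are preserved under product with any space.

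For the Serre case the situation is more delicate because the translated test object $\A \times Z$ is a finite CW complex only when $Z$ itself is finite. The key step is the standard observation that a (weak) Serre fibration automatically enjoys the (weak) CHP with respect to every CW complex, by skeletal induction: the lifting problem for a CW complex $W$ is solved cell by cell, each attaching map giving a finite CW pair to which the hypothesis applies, with the limit map assembled from the inductive stages (using Lemma \ref{L:glue} to glue the fiberwise homotopies in the weak case). Applying this with $W = \A \times Z$ (a CW complex whenever $\A$ and $Z$ are) yields the required lift, and the trivial variant follows because cellular inclusions are preserved under product with a CW complex.

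The main obstacle I anticipate is tracking the $2$-morphisms through the exponential adjunction in the weak lifting case, so that the fiberwise homotopy produced downstairs really gives back a fiberwise homotopy of maps into $\Map(Z,\X)$; this should reduce to naturality of the adjunction, but it is the only non-formal point. The Serre-case skeletal induction is standard but worth spelling out given that the paper defines Serre fibration only via finite CW complexes.
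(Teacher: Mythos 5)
Your main device---converting a lifting problem against $\Map(Z,\X)\to\Map(Z,\Y)$ into one against $p$ via the exponential property of mapping stacks---is exactly the paper's route: the paper simply cites Theorem 7.10 of \cite{Whitehead} and notes that the same (adjunction) proof works. The Hurewicz case of your argument, including the bookkeeping of 2-morphisms and fiberwise homotopies through the adjunction, is fine, and in the trivial Serre case no extra work is needed either, since Definition \ref{D:trivial} tests against cellular inclusions of \emph{arbitrary} CW complexes, and $i\times\id_Z$ is again such an inclusion.

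The genuine gap is in your treatment of the (non-trivial) Serre case. There you need the CHP of $p$ with respect to $A\times Z$, which is an infinite CW complex whenever $Z$ is, and you propose to upgrade the CHP from finite CW complexes to all CW complexes by skeletal induction, ``the limit map assembled from the inductive stages.'' For topological stacks this is precisely the kind of argument that fails: as the introduction of the paper stresses, colimits are not well-behaved in the 2-category of topological stacks, and one cannot in general glue morphisms into a stack along closed subsets (cell boundaries, skeleta) or assemble a morphism from compatible maps on the skeleta of a CW complex; maps to a stack only satisfy descent along open covers. So the cell-by-cell construction produces the intermediate lifts but gives you no way to combine them into a single morphism $W\times I\to\X$. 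To repair this you would either have to restrict $Z$ to finite CW complexes (so that $A\times Z$ is finite and the definition applies directly), or replace the skeletal induction by the machinery the paper uses elsewhere to circumvent exactly this issue---classifying spaces and locally shrinkable atlases as in Theorem \ref{T:nice}, Corollary \ref{C:weakSerre} and the proof of Theorem \ref{T:strong}---which reduces the needed lifting property to one for a map of topological spaces, where your induction is legitimate.
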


 \begin{proof}
 This is Theorem 7.10 of \cite{Whitehead}. The same proof works.
 \end{proof}

\subsection{Quotient stacks}{\label{SS:quotient}}

Let $\X$ be a topological stack and $[R\sst{}X]$ a topological groupoid presentation
for it. If $s \: R \to X$ is a (weak) (trivial) Hurewicz/Serre fibration, then  the quotient
map $p \: X \to \X$ is locally a (weak) (trivial) Hurewicz/Serre fibration. This follows from
the 2-cartesian diagram
  $$ \xymatrix@M=6pt{ R \ar[r]^t \ar[d]_s  & X \ar[d]^p \\
         X \ar[r]_p & \X}$$
and Lemma \ref{L:epibasechange}. 

In particular, by Proposition \ref{P:uniformization},
if $s$ is a (weak) (trivial) Hurewicz fibration, then $p \: X \to \X$ 
is a (weak) (trivial) Serre fibration whose base extension
along any map $T \to \X$ from a paracompact $T$ is a (weak) (trivial) Hurewicz fibration.

 \begin{ex}{\label{E:quotient}}
   The above discussion applies to the case $\X=[X/G]$, where $G$ is a topological group
   acting on a topological space $X$. Therefore, the quotient map $p \: X \to [X/G]$ is
   Serre fibration (with fiber $G$) which becomes a Hurewicz fibration upon base extension
   along any map $T \to [X/G]$ from a paracompact $T$. If the group $G$ is contractible,
   then $p$ is a trivial Serre fibration whose base extension $p_T$ is a trivial Hurewicz fibration for
   $T$ paracompact.
 \end{ex}
 
\subsection{Covering maps}{\label{SS:covering}} 
Let $p \: \X \to \Y$ be a covering morphism of stacks in the sense of  \cite{Foundations}, 
$\S$18. Then $p$ is a Hurewicz fibration. This is true thanks to Lemma \ref{L:basechange1}.

\subsection{Gerbes}{\label{SS:gerbe}}

Let $Y$ be a topological space and $G \to Y$ a locally trivial bundle of topological groups over $Y$.
Let $\X$ be a $G$-gerbe over $Y$. Then, the structure map $p \: \X \to Y$ is  locally
a Hurewicz fibration, hence, in particular, a weak Serre fibration (Theorem \ref{T:strong}). 

To see why this is true, note that we can work locally on
$Y$, so we may assume that $G \to Y$ is a trivial bundle, i.e., is of the form $H\times X \to X$ for
some topological group $H$. By further shrinking $Y$, we may also assume that
$\X=[Y/H]$, for the trivial action of $H$ on $Y$. But in this case 
$\X=[Y/H]=[*/H]\times Y$, and the map $p\:\X \to Y$ is simply the second projection. In this case,
by Lemma \ref{L:projection}, $p$ is a Hurewicz fibration.


\section{Fiber homotopy exact sequence}{\label{S:Fiber}}

\subsection{Homotopy groups of topological stacks}{\label{SS:homotopygroups}}

There are at least two ways to define homotopy groups of a pointed topological 
stack $(\X,x)$. One is discussed in \cite{Foundations}, $\S$17. It is the standard definition
  $$\pi_n(\X,x):=[(S^n,*),(\X,x)]$$
in terms of homotopy classes of pointed maps. For this definition to make sense, 
it is argued in {\em loc.~cit.} 
that $\X$ needs to be a Serre topological stack (Definition \ref{D:HurSerre1}). However, 
it was pointed out to me by A.~Henriques that this definition makes sense for all
topological stacks thanks to Lemma \ref{L:glue}.

The second definition for the homotopy groups makes use of a classifying space $\varphi
\: X \to \X$ for $\X$ (see $\S$\ref{SS:classifying}). More precisely, we define
        $$\pi_n(\X,x):=\pi_n(X,\tilde{x}),$$
where $\tilde{x}$ is a lift of $x$ to $X$. It is shown in (\cite{Homotopytypes}, $\S$10) that
this is well-defined up to canonical isomorphism. In fact, it is shown in
(\cite{Homotopytypes}, Theorem 10.5) that
this definition is equivalent to the  previous definition whenever $\X$  is Serre. Thanks to
Lemma \ref{L:glue}, the assumption on $\X$ being Serre is redundant and the two 
definitions are indeed equivalent for all topological stacks $\X$, as we will
see in Corollary \ref{C:equal}.

 \begin{lem}{\label{L:NDR}}
  Let $A \hookrightarrow B$ be a cofibration and $a \in A$ a point.
  Let $(\X,x)$ be a pointed topological stack. Then, the natural map
    $$[(B/A,a),(\X,x)] \to [(B,A,a),(\X,x,x)]$$
  is a bijection. Here, $[-,-]$ stands for homotopy classes
  of maps (of pairs or triples).
 \end{lem}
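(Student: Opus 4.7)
My plan is to adapt the classical argument for pairs of topological spaces (the homotopy extension property of the cofibration $A \hookrightarrow B$) to the stack setting, using Lemma \ref{L:glue} to replace the naive gluing of maps along closed subsets, which is generally problematic for stacks (cf.~\S\ref{SS:pushout}).

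For surjectivity, I would start with a pair map $(f,\alpha)$, where $f \: B \to \X$ and $\alpha \: f|_A \twomor{} c_x$ is a $2$-isomorphism to the constant map at $x$. View $\alpha$ as a ghost homotopy (Definition \ref{D:relhtpy}) and convert it to an honest constant-in-time homotopy $H_A \: I \times A \to \X$ from $f|_A$ to $c_x$. Because $A \hookrightarrow B$ is a cofibration, the inclusion $(\{0\} \times B) \cup (I \times A) \hookrightarrow I \times B$ admits a retraction $r$; classically, one would extend $f$ by feeding $r$ the map on $(\{0\} \times B) \cup (I \times A)$ obtained by gluing $f$ and $H_A$ along $\{0\} \times A$. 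In the stack setting this closed-subset gluing is where Lemma \ref{L:glue} enters: reparametrize $f \circ \pr_2$ and $H_A$ so that they extend to two maps defined on open subsets of $I\times B$ whose intersection is a product $(1/3,2/3) \times B$, and glue there using the $2$-isomorphism witnessing compatibility at $\{0\}\times A$. The result is a homotopy $\tilde{H} \: I \times B \to \X$ with $\tilde{H}|_{I\times A}$ (compatibly) the homotopy $H_A$ and $\tilde{H}|_{\{0\}\times B}$ $2$-iso to $f$. At time $1$ we obtain $f' := \tilde{H}|_{\{1\}\times B}$ whose restriction to $A$ equals $c_x$ on the nose, so $f'$ factors through a pointed map $\bar{f} \: (B/A,a) \to (\X,x)$. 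By construction $\tilde{H}$ is a homotopy of pair maps from $(f,\alpha)$ to $\bar{f}\circ q$, giving surjectivity.

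For injectivity, the same argument is applied to the cofibration $(I \times A) \cup (\partial I \times B) \hookrightarrow I \times B$, which is again a cofibration because $A \hookrightarrow B$ is. Given $\bar{f}_0, \bar{f}_1 \: B/A \to \X$ and a pair-map homotopy between $\bar{f}_0\circ q$ and $\bar{f}_1\circ q$, the extended lift produces a pointed homotopy $\bar{f}_0 \simeq \bar{f}_1$ as maps from $B/A$.

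The main obstacle is the gluing step in the surjectivity argument. Because $\X$ is not assumed fibrant, Proposition \ref{P:pushout1} does not apply and we cannot directly attach $f$ and $H_A$ along the closed subset $\{0\}\times A$. The open-overlap reparametrization of Lemma \ref{L:glue} sidesteps this, but the bookkeeping of the $2$-isomorphisms $\ep_0, \ep_1, \psi$ of Definition \ref{D:relhtpy} must be carried out carefully, both to verify that $\tilde{H}$ is a bona fide relative homotopy and to arrange $f'|_A = c_x$ strictly, which is essential for the final factorization through the quotient $B/A$ to exist inside $\X$ viewed as a category fibered in groupoids.
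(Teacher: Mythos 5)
Your strategy---strictify via the homotopy extension property and then factor through the quotient $B/A$---has two genuine gaps, and the second is exactly the difficulty the paper's proof is built to avoid. First, the gluing step does not work as described: $H_A$ is defined only on $I\times A$, so there is no way to ``extend'' it (or any reparametrization of it) to an open subset of $I\times B$ whose overlap with the domain of $f\circ\pr_2$ is $(1/3,2/3)\times B$; producing such an extension over all of $B$ is precisely the homotopy extension problem you are trying to solve, so the step is circular. Lemma \ref{L:glue} glues two homotopies with the \emph{same} spatial base $\A$ along an open interval in the time direction; it does not glue maps with different spatial bases $B$ and $A$ along the closed (generally non-open) subspace $A\subset B$, and the compatibility $2$-isomorphism you invoke lives only over $A$, not over anything of the form $(1/3,2/3)\times B$. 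The tool in the paper for gluing along a map that is not an open embedding is Proposition \ref{P:pushout2}, at the cost of inserting the tube $A\times I$, i.e., replacing the strict pushout by a homotopy pushout.

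Second, even if you produced $f'\: B\to\X$ with $f'|_A$ strictly equal to the constant map at $x$, this would not give a map $B/A\to\X$: stacks satisfy descent only for open covers, not along quotient maps such as $B\to B/A$, and, as pointed out in \S\ref{SS:pushout}, $B/A$ fails to have the universal property of a quotient in the 2-category of topological stacks. The paper's proof explicitly notes that the map $B/A\to\X$ ``may actually not exist,'' and for this reason it never strictifies: from a triple map $(B,A,a)\to(\X,x,x)$ it constructs, via Proposition \ref{P:pushout2}, a map of triples $(CA\vee_A B,CA,a)\to(\X,x,x)$, unique up to homotopy, and then uses the cofibration hypothesis only to see that $(CA\vee_A B,CA,a)\to(B/A,a,a)$ is a homotopy equivalence of triples, whence the bijection on homotopy classes. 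To repair your argument you would have to work with the cone (or the fattened pushout of Proposition \ref{P:pushout2}) in place of $B/A$ itself---which is the paper's argument.
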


 \begin{proof} 
  The point is that, given  a map of triples
  $(B,A,a)\to(\X,x,x)$, instead of trying to produce a
  map $B/A \to \X$, which may actually not exist,  we can
  construct a map of triples $(CA\vee_A B,CA,a) \to (\X,x,x)$,
  uniquely up to homotopy. (Here,
  $CA$ stands for the 
  cone of $A$.)
  This is true thanks  to Proposition \ref{P:pushout2}.
  It follows that the natural map
    $$[(CA\vee_A B,CA,a),(\X,x,x)] \to [(B,A,a),(\X,x,x)]$$
  is a bijection.

  Since $A \hookrightarrow B$ is a cofibration, the quotient  map
  $(CA\vee_A B,CA,a)\to (B/A,a,a)$ is a  homotopy equivalence
  of triples. So,
   $$[(B/A,a,a),(\X,x,x)] \cong [(CA\vee_A B,CA,a),(\X,x,x)]\cong  [(B/A,a),(\X,x)].$$
  This completes the proof.
 \end{proof}

\subsection{Fiber homotopy exact sequence}{\label{SS:fhes}}

In this subsection we prove that the two definitions given in $\S$\ref{SS:homotopygroups}
for homotopy groups of topological stacks agree and they enjoy  fiber homotopy exact 
sequences for weak Serre fibrations. 
 
 \begin{thm}{\label{T:fhes}}
  Let $p \: \X \to \Y$ be a weak Serre fibration of topological stacks.
  Let $x \: * \to \X$ be a point in $\X$, and $\F:=*\times_{\Y}\X$
  the fiber of $p$ over $y:=p(x)$. Then, there is a fiber homotopy exact sequence
      $$\cdots\to\pi_{n+1}(\Y,y)\to\pi_n(\F,x)\to\pi_n(\X,x)
          \to\pi_n(\Y,y)\to\pi_{n-1}(\F,x)\to\cdots.$$
  Here, $\pi_n(\X,x)$ stands for either of the two definitions of 
  homotopy groups given in $\S$\ref{SS:homotopygroups}.
 \end{thm}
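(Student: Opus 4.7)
The plan is to reduce the statement to the classical fiber homotopy exact sequence for weak Serre fibrations of topological spaces by passing to classifying spaces.

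\emph{Step 1 (Reduction to spaces).} Choose a classifying space $\psi \colon Y \to \Y$ via Theorem \ref{T:nice}. Pulling back along $\psi$ gives a weak Serre fibration $p_Y \colon \X_Y := \X\times_\Y Y \to Y$ by Lemma \ref{L:basechange1}. Next choose a classifying space $\varphi \colon X \to \X_Y$ for $\X_Y$; by Corollary \ref{C:weakSerre}, $\varphi$ is a weak trivial Serre fibration. By Lemma \ref{L:compose2}, the composite $q := p_Y \circ \varphi \colon X \to Y$ is then a weak Serre fibration between topological spaces. Choose a lift $\tilde x \in X$ of $x$ and set $\tilde y := q(\tilde x)$, a lift of $y$.

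\emph{Step 2 (Fiber identification).} The fiber $F := q^{-1}(\tilde y)$ is naturally $X \times_{\X_Y} \F$, since $\X_Y \times_Y \{\tilde y\} = \X \times_\Y \{y\} = \F$. The projection $F \to \F$ is the base change of $\varphi$ along $\F \to \X_Y$, hence locally shrinkable by Lemma \ref{L:locallyshrinkable1} and therefore a universal weak equivalence by Lemma \ref{L:locallyshrinkable3}; thus $F \to \F$ qualifies as a classifying space for $\F$. Similarly the composite $X \to \X_Y \to \X$ provides a classifying space for $\X$.

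\emph{Step 3 (Apply the classical sequence).} Since $q$ is a weak Serre fibration of topological spaces, the classical fiber homotopy exact sequence applies and yields
\[
\cdots \to \pi_{n+1}(Y,\tilde y) \to \pi_n(F,\tilde x) \to \pi_n(X,\tilde x) \to \pi_n(Y,\tilde y) \to \pi_{n-1}(F,\tilde x) \to \cdots .
\]
By the classifying-space definition of $\pi_n$ recalled in $\S$\ref{SS:homotopygroups}, these groups are $\pi_{n+1}(\Y,y)$, $\pi_n(\F,x)$, $\pi_n(\X,x)$, $\pi_n(\Y,y)$, and $\pi_{n-1}(\F,x)$, respectively, giving the desired long exact sequence for this definition. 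Independence of the choice of classifying space is \cite{Homotopytypes}, $\S$10.

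\emph{Step 4 (Equivalence of definitions, Corollary \ref{C:equal}).} To obtain the sequence for the sphere-based definition, invoke the canonical isomorphism between the two definitions of $\pi_n$. For Serre stacks this is \cite{Homotopytypes}, Theorem 10.5; to remove the Serre hypothesis one reinterprets the comparison map via Lemma \ref{L:NDR} (applied to the cofibration $S^{n-1}\hookrightarrow D^n$) and uses Lemma \ref{L:glue} to concatenate the relevant (ghost) homotopies without any fibrancy requirement on $\X$. The main obstacle is precisely this last step: establishing that the sphere-based $\pi_n$ coincides with the classifying-space $\pi_n$ for arbitrary, not necessarily Serre, topological stacks, where Lemmas \ref{L:glue} and \ref{L:NDR} do the real work. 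Once this identification is in hand, the rest of the argument is a routine reduction to the classical theorem for spaces.
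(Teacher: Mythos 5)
Your Steps 1--3 are essentially the paper's own argument for the classifying-space definition of $\pi_n$: base extend along a classifying space $Y \to \Y$, compose with a classifying space of the pullback (a weak trivial Serre fibration by Corollary \ref{C:weakSerre}), use Lemma \ref{L:compose2} to get a weak Serre fibration $q \: X \to Y$ of spaces, and quote the classical sequence; your explicit identification of $q^{-1}(\tilde y)$ as a classifying space of $\F$ is a correct filling-in of a step the paper leaves implicit. The problem is Step 4. The theorem asserts the sequence for \emph{both} definitions of $\pi_n$, and you propose to obtain the sphere-based version by transporting along the isomorphism between the two definitions. But for a general (not necessarily Serre) topological stack that isomorphism is exactly Corollary \ref{C:equal}, which the paper deduces \emph{from} Theorem \ref{T:fhes} (applied with the sphere-based $\pi_n$ to the weak trivial Serre fibration $\varphi \: X \to \X$, whose fibers are weakly contractible). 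So as written your argument is circular, and your gesture toward an independent proof of the equivalence ``via Lemma \ref{L:NDR} and Lemma \ref{L:glue}'' is not carried out --- you yourself label it the main obstacle. Leaving that unproved means the sphere-based half of the theorem, which is where the real content lies, is missing.

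The paper avoids this by proving the sphere-based version directly: since $p$ is a weak Serre fibration, the classical construction of the connecting maps and the verification of exactness (lifting maps of disks and spheres up to fiberwise homotopy, with relative classes handled through pairs $(\mathbb{D}^n,\partial\mathbb{D}^n)$) carries over, the only stack-specific issue being that a map of pairs $(\mathbb{D}^n,\partial\mathbb{D}^n)\to(\X,x)$ is not literally a pointed map from $\mathbb{D}^n/\partial\mathbb{D}^n$; Lemma \ref{L:NDR} shows this is harmless up to homotopy. To repair your proof you would either have to run that direct argument for the sphere-based $\pi_n$ (making Step 4 unnecessary), or genuinely prove the equivalence of the two definitions for arbitrary topological stacks without invoking Theorem \ref{T:fhes} --- for instance by showing directly, using the WLLP of cellular inclusions with respect to the weak trivial Serre fibration $\varphi \: X \to \X$ together with Lemma \ref{L:glue}, that $\varphi$ induces bijections on sphere-based homotopy classes. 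Either route requires real work that your proposal currently only names.
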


 \begin{proof}
   For the first definition of $\pi_n$ (using homotopy classes of maps from $S^n$)
   the classical proof carries over, except that one has to be careful
   that to give a map of pairs $(\mathbb{D}^n,\partial\mathbb{D}^n) \to (\X,x)$ is
   not the same thing as giving a pointed map
   $(\mathbb{D}^n/\partial\mathbb{D}^n,*) \to (\X,x)$.
   This, however, is fine if we work up to homotopy, thanks to Lemma \ref{L:NDR}.

   For the second definition of $\pi_n$ we can assume, by making a base extension 
   along a universal weak equivalence $Y \to \Y$,
   that $\Y=Y$ is a topological space.
   Choose a map $X \to \X$ which is a universal weak equivalence and a
   weak Serre fibration (see Theorem \ref{T:nice} and Corollary \ref{C:weakSerre}).
   It is enough to prove the statement for the
   composite map $p' \: X \to Y$. But $p'$ is a weak Serre fibration of topological spaces by
   Lemma \ref{L:compose2}, and the claim in this case is standard.
 \end{proof}

 \begin{cor}{\label{C:equal}}
  The two definitions for $\pi_n(\X,x)$ given in $\S$\ref{SS:homotopygroups} coincide.
 \end{cor}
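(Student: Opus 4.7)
Write $\pi_n^{(1)}(\X,x) := [(S^n,*),(\X,x)]$ and $\pi_n^{(2)}(\X,x) := \pi_n(X,\tilde x)$ for the two definitions, where $\varphi \: X \to \X$ is a classifying space (Theorem \ref{T:nice}) and $\tilde x \in X$ lifts $x$. Because $X$ is an honest topological space, Yoneda and the standard identification of stacky homotopies with continuous homotopies into a space show $\pi_n^{(2)}(\X,x) = \pi_n^{(1)}(X,\tilde x)$ (using Lemma \ref{L:NDR} to pass freely between pointed maps $S^n \to X$ and maps of pairs $(\mathbb{D}^n,\partial\mathbb{D}^n) \to (X,\tilde x)$). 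So the task reduces to proving that
\[
\varphi_* \: \pi_n^{(1)}(X,\tilde x) \longrightarrow \pi_n^{(1)}(\X,x)
\]
is a bijection for every $n \geq 0$.

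The strategy is to apply the fiber homotopy exact sequence (Theorem \ref{T:fhes}), with its first-definition interpretation of $\pi_n$, to the map $\varphi$ itself. The hypothesis is met: by Corollary \ref{C:weakSerre} the classifying space map $\varphi$ is a weak trivial Serre fibration, and in particular a weak Serre fibration. Let $\F := *\times_{\X} X$ be the fiber of $\varphi$ over $x$. Since $\varphi$ is representable, $\F$ is a genuine topological space, so its homotopy groups are unambiguous. Theorem \ref{T:fhes} then produces
\[
\cdots \to \pi_{n+1}^{(1)}(\X,x) \to \pi_n^{(1)}(\F,\tilde x) \to \pi_n^{(1)}(X,\tilde x) \to \pi_n^{(1)}(\X,x) \to \pi_{n-1}^{(1)}(\F,\tilde x) \to \cdots.
\]

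The remaining ingredient is weak contractibility of $\F$. By Theorem \ref{T:nice}, $\varphi$ is locally shrinkable, and Lemma \ref{L:locallyshrinkable3} then says $\varphi$ is a universal weak equivalence. Base-changing along the point $* \to \X$ that picks out $x$ gives that the projection $\F \to *$ is a weak equivalence of topological spaces, so $\pi_n(\F,\tilde x) = 0$ for all $n \geq 0$. Plugging this into the exact sequence makes $\varphi_*$ an isomorphism for $n \geq 1$ and a bijection on $\pi_0$, which is exactly the desired identification $\pi_n^{(1)}(\X,x) = \pi_n^{(2)}(\X,x)$.

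I do not anticipate a genuine obstacle, since all the heavy lifting has been done upstream: Theorem \ref{T:fhes} provides the long exact sequence for any weak Serre fibration, Corollary \ref{C:weakSerre} supplies the hypothesis, and Lemma \ref{L:locallyshrinkable3} delivers the vanishing of $\pi_n(\F)$. The only real care needed is the bookkeeping in the first paragraph, namely checking that for a topological space the first and second definitions visibly agree, so that the isomorphism coming out of the long exact sequence is the one the statement asks for.
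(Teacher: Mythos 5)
Your proposal is correct and follows essentially the same route as the paper, whose proof is just the instruction to combine Corollary \ref{C:weakSerre} with Theorem \ref{T:fhes}: you apply the fiber homotopy exact sequence (in its first-definition form) to the classifying space map $\varphi \: X \to \X$ and kill the fiber's homotopy groups via local shrinkability (Lemma \ref{L:locallyshrinkable3}), which is exactly the intended argument, spelled out in more detail.
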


 \begin{proof}
  Use Corollary \ref{C:weakSerre} and  Theorem \ref{T:fhes}.
 \end{proof}

 \begin{prop}{\label{P:trivial}}
  Let $p \: \X \to \Y$ be a Serre morphism of topological stacks (Definition \ref{D:HurSerre2}).
  Then, $p$ is a trivial  Serre fibration if and only if it is a  Serre fibration
  and a weak equivalence (i.e., induces isomorphisms on all homotopy groups).
 \end{prop}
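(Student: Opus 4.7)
My plan is to treat the two implications separately: the forward direction uses the fiber homotopy exact sequence (Theorem \ref{T:fhes}) to show the fibers of $p$ are weakly contractible, and the reverse direction reduces the problem to the classical Serre-fibration statement for topological spaces via a classifying space, then invokes Corollary \ref{C:weakTriv}.

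For the forward direction, a trivial Serre fibration $p$ automatically has CHP for finite CW complexes $K$: just apply LLP to the cellular inclusion $\{0\}\times K \hookrightarrow I\times K$. To see $p$ is a weak equivalence I show the fiber $\F = *\times_\Y \X$ over any $y=p(x)$ is weakly contractible. A class in $\pi_n(\F,x)$ is represented by $u \: S^n \to \F$; composing with $\F \to \X$ gives $f \: S^n \to \X$ together with a 2-isomorphism $\alpha \: p\circ f \Rightarrow y$. Applying LLP of $p$ to the cellular inclusion $S^n \hookrightarrow D^{n+1}$ with bottom map the constant map $y$ produces $k \: D^{n+1} \to \X$ with $p\circ k$ 2-iso to the constant $y$-map and a ghost homotopy from $f$ to $k|_{S^n}$ compatible with $\alpha$. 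By the universal property of the 2-fiber product $k$ factors through $\F$ and nullhomotopes $u$. Thus $\pi_*(\F,x)=0$, and Theorem \ref{T:fhes} forces $p_*$ to be an isomorphism on all $\pi_n$.

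For the reverse direction, Corollary \ref{C:weakTriv} reduces the task to showing that every cellular inclusion $i \: A \to B$ of finite CW complexes has WLLP with respect to $p$. Given such a lifting problem $(f,g,\alpha)$, I use Lemma \ref{L:basechange1} to base change along $g \: B \to \Y$ and reduce to the case where the base is $B$ and the map is $p_B \: \X_B \to B$, with $\X_B = B\times_\Y \X$. Then $\X_B$ is a Serre stack by Lemma \ref{L:HurSerre2}, $p_B$ remains a Serre fibration by Lemma \ref{L:basechange1}, and $p_B$ remains a weak equivalence because its fibers agree with those of $p$, which are weakly contractible by the hypothesis together with Theorem \ref{T:fhes}. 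Choose a classifying space $\varphi \: X \to \X_B$ as in Theorem \ref{T:nice}. By Lemma \ref{L:compose2} the composition $q := p_B\circ\varphi \: X \to B$ is a Serre fibration, and since $\varphi$ is a universal weak equivalence, $q$ is also a weak equivalence of topological spaces — hence a genuine trivial Serre fibration of spaces by the classical Quillen theorem, so in particular $i$ has LLP against $q$.

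To finish, I lift $f \: A \to \X_B$ through $\varphi$ using Lemma \ref{L:Dold2} (applicable since $A$ is paracompact and $\varphi$ is a locally shrinkable representable morphism), obtaining $\tilde f \: A \to X$ with $\varphi\circ\tilde f \cong f$; apply the space-level LLP of $q$ to extend $\tilde f$ to a strict section $\tilde k \: B \to X$ of $q$; and set $k := \varphi\circ\tilde k \: B \to \X_B$. This yields the required WLLP datum, with strict equality $p_B\circ k = \id_B$ and a 2-isomorphism $k\circ i \cong f$ doubling as the ghost homotopy. The main obstacle is the coherent bookkeeping of 2-isomorphisms produced by the base change and the classifying space; once these are assembled, the actual homotopical content is entirely carried by the classical fact that a Serre fibration of topological spaces which is a weak equivalence is a trivial Serre fibration.
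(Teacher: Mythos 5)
Your forward direction is essentially sound, and it is a bit more direct than the paper's: you argue on the stacky fiber itself via LLP against $S^n\hookrightarrow D^{n+1}$, whereas the paper first treats the representable case (so that the fibers are honest spaces, where weak contractibility is immediate) and then deduces the general case through a classifying space. The points you gloss there (surjectivity/injectivity on $\pi_0$, and the fact that extending a sphere over the disc up to 2-isomorphism kills the class in $\pi_n$ of a stack) are handled by the machinery of Lemma \ref{L:glue} and Lemma \ref{L:NDR}, so this half is fine.

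The reverse direction has a genuine gap. You claim that $q:=p_B\circ\varphi$ is a Serre fibration by Lemma \ref{L:compose2}; that lemma composes two fibrations of the same kind, but the classifying atlas $\varphi \: X \to \X_B$ of Theorem \ref{T:nice} is only known to be a \emph{weak trivial} Serre fibration (Corollary \ref{C:weakSerre}) --- it is not asserted, and is not true in general, that it is an honest Serre fibration. Consequently $q$ is only a weak Serre fibration (via Lemma \ref{L:compose0}), the classical model-category fact ``Serre fibration plus weak equivalence implies LLP against cellular inclusions'' cannot be invoked for $q$, and the strict lift $\tilde{k}$ with $q\circ\tilde{k}=\id_B$ together with the genuine 2-isomorphism (ghost homotopy) $k\circ i\cong f$ that you build from it are unavailable. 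The argument survives because your target, after Corollary \ref{C:weakTriv}, is only WLLP: $q$ is a weak Serre fibration of spaces and a weak equivalence, hence has weakly contractible fibers (Theorem \ref{T:fhes}) and is therefore a weak trivial Serre fibration; this yields a lift of $\tilde{f}$ over $B$ only up to fiberwise homotopy, and transporting that homotopy through $\varphi$ still produces the WLLP datum for $i$ with respect to $p_B$. With this correction your proof becomes essentially the paper's: the paper base-extends $p\circ\varphi$ along arbitrary maps from spaces, uses exactly this ``weak Serre fibration with contractible fibers is weak trivial'' input, and packages the final lifting-through-$\varphi$ step as Lemma \ref{L:compose3} before applying Corollary \ref{C:weakTriv}. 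Two smaller points: Definition \ref{D:trivial} requires (W)LLP against cellular inclusions of \emph{all} CW complexes, not just finite ones (CW complexes are paracompact, so Lemma \ref{L:Dold2} still applies); and the fact that $\X_B$ is a Serre stack is Definition \ref{D:HurSerre2} itself rather than Lemma \ref{L:HurSerre2}.
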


 \begin{proof}
   Assume that $p$ is a  Serre fibration and a weak equivalence. 
   Pick a classifying space $\varphi \: X \to \X$ for $X$. By 
   Corollary \ref{C:weakSerre}, $f:=p\circ \varphi$ is a weak Serre fibration and a weak equivalence.
   Therefore, the base extension $f_T$ of $f$ along an arbitrary map $T \to \Y$ from 
   a topological space $T$ is
   weak Serre fibration of topological spaces and has contractible fibers 
   (by Theorem \ref{T:fhes}). Thus, $f_T$  is a weak trivial Serre fibration.
   Since $T \to \Y$ is arbitrary, it follows from Lemma 
   \ref{L:basechange1} that  $f=p\circ \varphi$  is a weak
   trivial Serre fibration.  Lemma \ref{L:compose3}  implies that $p$ itself is  a 
   weak  trivial Serre fibration.
   Since $p$ is also a Serre fibration, it follows from Corollary \ref{C:weakTriv}
   that it is a trivial Serre fibration.

   To prove the converse, first we consider the case where $p$ is representable.
   To show that $p$ induces isomorphisms on homotopy
   groups, it is enough to show that the fibers of $p$ have vanishing
   homotopy groups (Theorem \ref{T:fhes}). But this is obvious
   because, by Lemma \ref{L:basechange1}, the base extension of $p$ along every map
   $y \: * \to \Y$ is a trivial Serre fibration. 

   Now, let $p$ be  arbitrary. Pick a classifying space $\varphi \: X \to \X$ for $X$.
   It follows from Corollary \ref{C:weakSerre} and Lemma \ref{L:compose2} that
   $p\circ \varphi$ is a weak trivial Serre fibration. Therefore, by the representable case,
   $p\circ \varphi$ is a weak equivalence. Since $\varphi$ is  also a weak equivalence,
   it follows that $p$ is a weak equivalence.
 \end{proof}

\subsection{Some examples of fiber homotopy exact sequence}{\label{SS:fhesEx}}

Let us work out  the fiber homotopy exact sequences coming from
the examples discussed in $\S$\ref{S:Examples}.

 \begin{ex}{\label{E:mapping}}
  Let $\X$ be a topological stack and $x \: * \to \X$ a point in it. Let
  $P_*\X$ be the stack of paths in $\X$ initiating at $x$, and consider
  $\ev_1 \: P_*\X \to \X$. Equivalently, $\ev_1$ is the base extension of
  the fibration $(\ev_0,\ev_1) \: P\X \to \X\times\X$
  (see Example \ref{E:fib}) along the map $(x,\id_{\X}) \: \X \to \X\times \X$.
  The fiber of $\ev_1 \: P_*\X \to \X$ over $x$ is the based loop stack $\Omega_x\X$,
  that is, we have a   fiber sequence
          $$\Omega_x\X \to P_*\X \to \X.$$
  Notice that  $P_*\X$, being a fiber of the shrinkable map (Lemma \ref{L:contractible})
  $\ev_0 \: P\X \to \X$,  is a contractible stack (Lemma \ref{L:locallyshrinkable1}).
  Therefore, applying the fiber homotopy exact sequence to the above fibration, we find
  that $\pi_n(\Omega_x\X)\cong\pi_{n+1}\X$, for $n\geq 0$.
 \end{ex}
 
 \begin{ex}{\label{E:quotientfhes}}
  Let $\X=[X/R]$ be the quotient stack of a topological groupoid $[R\sst{} X]$
  such that $s \: R \to X$ is a Hurewicz fibration. Suppose for simplicity that $X$ is
  connected. Let $x$ be a point of $X$ and  $F$ the fiber of $s$ over  $x$.
  Then, we have a fiber sequence
       $$F \to X \to \X.$$
  Applying the fiber   homotopy exact sequence to this, we obtain an exact sequence
       $$\cdots\to\pi_{n+1}\X\to\pi_nF\to\pi_nX
                 \to\pi_n\X\to\pi_{n-1}F\to\cdots.$$  
  In the special case where $\X= [X/G]$, this gives
       $$\cdots\to\pi_{n+1}\X\to\pi_nG\to\pi_nX
                 \to\pi_n\X\to\pi_{n-1}G\to\cdots.$$    
  The map $\pi_nG \to \pi_nX$ is induced by the  inclusion of the orbit $G\cdot x \hookrightarrow X$.     
 \end{ex}

 \begin{ex}{\label{E:covering}}
  Let $p \: \X \to \Y$ be a covering map of topological stacks. Then, since the fiber $F$
  of $p$ is discrete, the fiber homotopy exact sequence implies that
  $\pi_n\X\to\pi_n\Y$ is an isomorphism, 
  for all $n\geq 2$. Furthermore, $\pi_1\X \to \pi_1\Y$ is
  injective and its cokernel is in a natural bijection with $F$ (upon fixing base points).
 \end{ex}

 \begin{ex}{\label{E:gerbe}}
  Let $G \to Y$ be a locally trivial bundle of groups over a topological space $Y$. Let
  $\X$ be a $G$-gerbe over $Y$. Then, for a point $y \in Y$, we have a fiber sequence
       $$[*/H] \to \X \to Y,$$
  where we have denoted the fiber $G_y$ of $G \to Y$ by $H$.
  This gives rise to the exact sequence
        $$\cdots\to\pi_{n+1}Y\to\pi_{n-1}H\to\pi_n\X
          \to\pi_nY\to\pi_{n-2}H\to\pi_{n-1}\X\to\cdots.$$   
  Here, we have used the fact that $\pi_n[*/H]\cong\pi_{n-1}H$ for $n\geq 1$ and
  $\pi_0[*/H]=\{*\}$   (see Example \ref{E:quotientfhes}).   
 \end{ex}
\subsection{Homotopy groups of the coarse moduli space}{\label{SS:coarse}}

Any topological stack $\X$ has a coarse moduli space $\X_{mod}$ which
is an honest topological space, and there is a natural map $f\: \X \to \X_{mod}$
(see \cite{Foundations}, $\S$4.3). For example, when $\X$ is the quotient stack  $[X/R]$
of a topological groupoid $[R\sst{}X]$, then  $\X_{mod}$ is simply the
coarse quotient  $X/R$. When $\X=[X/G]$, this is just $X/G$.

We have induced maps 
   $$\pi_n(\X,x) \to \pi_n(\X_{mod},x)$$
on homotopies. For $n=0$ this is a bijection. For $n\geq 1$,  
these homomorphisms are in general far from being isomorphisms. Except in the case where $\X$
is a $G$-gerbe over $\X_{mod}$ (as in $\S$\ref{SS:gerbe}), the relation between
$\pi_n(\X,x)$ and   $\pi_n(\X_{mod},x)$ is unclear. In the case $n=1$, however, the map
      $f_* \: \pi_1(\X,x) \to \pi_1(\X_{mod},x)$
is quite interesting and there is an explicit description
for it. More precisely, under a certain not too restrictive condition on $\X$,
$\pi_1(\X_{mod},x)$ is obtained from $\pi_1(\X,x)$ by killing the images
of all inertia groups of $\X$. Combined with the computations of $\S$\ref{SS:fhesEx},
this leads to interesting formulas for the fundamental groups of coarse quotients of
topological groupoids. For more on this, we refer the reader to \cite{Slice}.

\section{Homotopy fiber of a morphism of topological stacks}{\label{S:Factor}}

In this section, we prove that every morphism of stacks has a natural fibrant replacement
(Theorem \ref{T:replacement}). We then use this to define the homotopy fiber of
a morphism of (topological) stacks (Definition \ref{D:hfib}).

Let $f \: \X \to \Y$ be a morphism of stacks. Set $\tX:=\X\times_{f,\Y,\ev_0}P\Y$, where
$P\Y=\Map([0,1],\Y)$ is the  path stack of $\X$, and $\ev_0 \: P\Y \to \Y$
is the time $t=0$ evaluation map (\cite{Mapping}, $\S$5.2). Note that if $\X$ and $\Y$ 
are topological stacks, then so is $\tX$. 
We define $p_f \: \tX \to \Y$ to
be the composition $\ev_1\circ\pr_2$, and $i_f \: \X \to \tX$ to be
the map whose first and second components are $\id_{\X}$ and
$c_{\Y}\circ f$, respectively. Here, $c_{\Y} \: \Y \to P\Y$ is the map parametrizing the
constant paths (i.e., $c_{\Y}$ is the map induced from
$[0,1] \to *$ by the functoriality of the mapping stack).

 \begin{thm}{\label{T:replacement}}
  Notation being as above, we have a factorization $f=p_f\circ i_f$,
      $$\xymatrix@M=6pt{\X \ar[r]^{i_f} & \tilde{\X} \ar[r]^{p_f} \ar@/^/[l]^{r_f}  & \Y,  }$$
  such that:
  \begin{itemize}
   \item[i)] the map $p_f \: \tX \to \Y$ is a Hurewicz
     fibration;

   \item[ii)] the map  $r_f \: \tX \to \X$ is shrinkable (Definition \ref{D:retraction}) onto the section
     $i_f \: \X \to \tX$. Here, $r_f \: \X\times_{f,\Y,\ev_0}P\Y \to \X$ is the first projection map.
  \end{itemize}
 \end{thm}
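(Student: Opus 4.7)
The plan is to verify $f = p_f \circ i_f$ by direct computation, deduce (i) by exhibiting $p_f$ as a composite of pullbacks of the Hurewicz fibrations supplied by Example \ref{E:fib} and Lemma \ref{L:projection}, and deduce (ii) as a direct base change of Lemma \ref{L:contractible}. The key observation is that $\tX$ is most usefully described as a pullback of the two-endpoint evaluation $(\ev_0,\ev_1) : P\Y \to \Y\times\Y$, not just of $\ev_0$, so that both projections $r_f$ and $p_f$ become visible simultaneously.

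The factorization is immediate: $p_f \circ i_f(x) = \ev_1(c_{\Y}(f(x))) = f(x)$, the 2-isomorphism identifying this composite with $f$ being the one built into the defining 2-fiber product.

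For part (i), I would observe that $\tX$ sits in the 2-cartesian square
$$\xymatrix@M=6pt{\tX \ar[r]^{\pr_2} \ar[d]_{(r_f,p_f)} & P\Y \ar[d]^{(\ev_0,\ev_1)} \\ \X\times\Y \ar[r]_{f\times\id_{\Y}} & \Y\times\Y}$$
via the equivalence $\tX \risom \X\times\Y \times_{\Y\times\Y} P\Y$ sending $(x,\gamma)$ to $(x,\ev_1(\gamma),\gamma)$. Since $(\ev_0,\ev_1)$ is a Hurewicz fibration by Example \ref{E:fib}, Lemma \ref{L:basechange1} makes $(r_f,p_f)$ a Hurewicz fibration as well. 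The projection $\pr_2 : \X\times\Y \to \Y$ is a Hurewicz fibration by Lemma \ref{L:projection}, so Lemma \ref{L:compose2} gives that $p_f = \pr_2 \circ (r_f,p_f)$ is a Hurewicz fibration.

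For part (ii), I would apply Lemma \ref{L:contractible} with $X = I$, basepoint $x = 0$, and the standard deformation retraction $r(t,s) = (1-t)s$ of $I$ onto $0$; this yields a shrinking of $\ev_0 : P\Y \to \Y$ onto the constant-path section $c_{\Y}$. Now $r_f : \tX \to \X$ is precisely the base change of $\ev_0 : P\Y \to \Y$ along $f : \X \to \Y$, and the corresponding base-changed section is $i_f$ by construction; Lemma \ref{L:locallyshrinkable1} (stability of shrinkability under base change) then delivers the conclusion. I do not expect any genuine obstacle: the theorem is a repackaging of previously established facts, the only mild sleight being the reformulation of $\tX$ as a pullback of the two-endpoint evaluation in step (i), which sidesteps any need to concatenate paths in a stack.
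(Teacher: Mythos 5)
Your proposal is correct and follows essentially the same route as the paper: the same 2-cartesian square exhibiting $\tX$ over $\X\times\Y$ via $(\ev_0,\ev_1)$, base change and composition with the projection for (i), and base change of the shrinkable map $\ev_0 \: P\Y \to \Y$ (Lemma \ref{L:contractible} plus Lemma \ref{L:locallyshrinkable1}) for (ii). The only difference is that you make the base-change and composition lemmas explicit, which the paper leaves implicit.
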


 \begin{proof} To prove (i), note that  $\tX$ sits in the following 2-cartesian diagram:
        $$\xymatrix@M=6pt{\tX \ar[d]_{(r_f,p_f)} \ar[r] & P\Y \ar[d]^{(\ev_0,\ev_1)} \\
             \X\times \Y \ar[r]_{f\times\id_Y}  & \Y\times\Y}$$
   We saw in $\S$\ref{SS:mappingfiberations} that $(\ev_0,\ev_1) \:
   P\Y \to \Y\times\Y$ is a  Hurewicz
   fibration. Hence,  $(r_f,p_f) \: \tX \to \X\times\Y$ is a Hurewicz fibration. 
   It follows from Lemma \ref{L:projection} that $p_f \: \tX \to \Y$ is also a Hurewicz
   fibration.
   
   Let us now prove (ii). By Lemma \ref{L:contractible},   $\ev_0 \: P\Y \to \Y$ 
   is  shrinkable onto $c_{\Y} \: \Y \to P\Y$. It follows from the 2-cartesian diagram
          $$\xymatrix@M=6pt{\tX \ar[d]^{r_f} \ar[r] & P\Y \ar[d]^{ev_0} \\
       \X \ar[r]_{f}  \ar@/^/ @{..>} [u]^{i_f} & \Y \ar@/^/ @{..>}  [u]^{c_{\Y}}}$$
   and Lemma \ref{L:locallyshrinkable1} that  $r_f \: \tX \to \X$ is shrinkable  onto  
   $i_f \: \X \to \tX$.
 \end{proof}

Contrary to the classical case, the map $i_f \: \X \to \tX$ is not necessarily an embedding.
This is because $c_{\Y} \: \Y \to P\Y$ is not necessarily an embedding. 
Proposition \ref{P:nonembedding} explains what  goes wrong.

 \begin{lem}{\label{L:rep}}
   Let $f \: \X \to \Y$ and $g\: \Y \to \Z$ be morphisms of stacks. Suppose that $\Y$ 
   has representable  diagonal. If $g\circ f$ is representable, then so is $f$.
 \end{lem}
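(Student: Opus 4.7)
The plan is to verify representability of $f$ directly: for every morphism $T \to \Y$ from a topological space, show that $T\times_{\Y}\X$ is a space. Composing with $g$, the hypothesis on $g\circ f$ immediately gives that $Q := T\times_{\Z}\X$ is a space.

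The crux of the argument is the $2$-cartesian square
$$\xymatrix@M=6pt{ T\times_{\Y}\X \ar[r] \ar[d] & Q \ar[d]^-{q} \\ \Y \ar[r]_-{\Delta_g} & \Y\times_{\Z}\Y }$$
where $\Delta_g$ is the relative diagonal of $g$ and $q$ sends $(t,x,\beta) \in Q$ (with $\beta$ the $\Z$-$2$-iso between the two compositions) to $((t\to\Y),\,f(x),\,\beta) \in \Y\times_{\Z}\Y$. Checking $2$-cartesianness is a direct unpacking: an object of the pullback is a tuple $(t,x,\beta,y,\phi_1,\phi_2)$ with $\phi_1 \: (t\to\Y) \Ra y$ and $\phi_2 \: f(x) \Ra y$ in $\Y$ satisfying $g(\phi_2^{-1}\phi_1) = \beta$; setting $\alpha := \phi_2^{-1}\phi_1$ and noting that $(y,\phi_1,\phi_2)$ is contractible given $\alpha$, this is equivalent to the datum $(t,x,\alpha)$ of a point of $T\times_{\Y}\X$.

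Given the square, it suffices to show $\Delta_g$ is representable. For $S \to \Y\times_{\Z}\Y$ from a space $S$, corresponding to $s_1,s_2 \: S \to \Y$ with a $\Z$-$2$-iso $\gamma \: g s_1 \Ra g s_2$, the hypothesis on $\Delta_{\Y}$ makes $S' := S\times_{\Y\times\Y}\Y$ a space. Inside $S'$, the further pullback $S\times_{\Y\times_{\Z}\Y}\Y$ is the equalizer of the two $\Z$-$2$-isos $g s_1 \Ra g s_2$ induced on $S'$ — namely the pullback of $\gamma$, and the image under $g$ of the tautological $\Y$-iso on $S'$ — and since $\Z$ has representable diagonal in the topological-stack context, the relevant $\mathrm{Isom}_{\Z}$-stack over $S'$ is representable by a space, so the equalizer of its two sections is again a space.

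Assembling, $T\times_{\Y}\X \simeq Q\times_{\Y\times_{\Z}\Y}\Y$ is the pullback of the representable morphism $\Delta_g$ along a map from the space $Q$, hence is itself a space, proving $f$ representable. I expect the $2$-categorical bookkeeping in the key square to be the main technical obstacle — one must carefully track $2$-isomorphisms in $\Y$ against those in $\Z$ and their compatibility — while the reduction of representability of $\Delta_g$ to that of $\Delta_{\Y}$ is the conceptual step where the hypothesis on $\Y$ visibly enters.
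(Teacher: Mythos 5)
Your reduction is sound and is, in substance, the paper's argument carried out fiberwise: the paper factors $f$ as the graph $\X \to \X\times_{\Z}\Y$ followed by $\pr_2 \: \X\times_{\Z}\Y \to \Y$ (the latter a base change of the representable $g\circ f$, the former controlled by $\Delta_{\Y}$), whereas you fix $T \to \Y$, observe that $Q = T\times_{\Z}\X$ is a space, and exhibit $T\times_{\Y}\X$ as the pullback of the relative diagonal $\Delta_g \: \Y \to \Y\times_{\Z}\Y$ along $q \: Q \to \Y\times_{\Z}\Y$. Your key 2-cartesian square and its verification (contracting $(y,\phi_1,\phi_2)$ against $\alpha=\phi_2^{-1}\phi_1$) are correct, as is the observation that everything then hinges on representability of $\Delta_g$.

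The gap is exactly where you invoke ``$\Z$ has representable diagonal in the topological-stack context.'' The lemma is stated for arbitrary stacks and assumes representability only of $\Delta_{\Y}$; nothing is assumed about $\Z$ (it need not be topological, and its diagonal is not assumed representable). After pulling back along $\Delta_{\Y}$ you have the space $S' = \operatorname{Isom}_{\Y}(s_1,s_2)$ over $S$, and the remaining condition $g(\tau)=\gamma$ cuts out a subsheaf of $S'$, namely the locus where two sections of the sheaf $\operatorname{Isom}_{\Z}(g s_1, g s_2)$ over $S'$ agree. If that Isom-sheaf is representable (e.g.\ if $\Delta_{\Z}$ is representable) this is an equalizer of maps of spaces and you are done; but that is an added hypothesis, and without it a subsheaf of a representable sheaf on $\Top$ cut out by an equality of 2-morphisms of $\Z$ has no a priori reason to be representable, nor does representability of $g\circ f$ obviously supply it (it constrains automorphisms of objects of $\X$, not of $\Z$). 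So as written you have proved the lemma under the extra assumption that $\Z$ also has representable diagonal --- which does cover the paper's actual use of the lemma in Proposition \ref{P:nonembedding} and Corollary \ref{C:embedding}, where the lemma's $\Z$ is a stack with representable diagonal --- but not the statement in its stated generality. (To be fair, the paper's own two-line proof is silent at the corresponding point: it establishes representability of $(\id_{\X},f)\:\X\to\X\times\Y$ and then uses the graph $\X\to\X\times_{\Z}\Y$, and the comparison map $\X\times_{\Z}\Y\to\X\times\Y$ fails to be a monomorphism precisely because of automorphisms in $\Z$; your equalizer step is the honest form of that elision.) To stay within the stated hypotheses you would need either an argument that this equality locus is representable for an arbitrary stack $\Z$, or to add the hypothesis explicitly.
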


 \begin{proof}
   First consider the 2-cartesian diagram
        $$\xymatrix@M=6pt{\X \ar[r]^(0.4){(\id_{\X},f)}  
              \ar[d]_f & \X\times\Y \ar[d]^{(f,\id_{\Y})} \\
                       \Y \ar[r]_(0.4){\Delta} & \Y\times\Y }$$
   Since $\Delta$ is representable, it follows that $(\id_{\X},f)$   is also representable.
   Now consider the 2-cartesian diagram
        $$\xymatrix@M=6pt{\X\times_{\Z}\Y \ar[r]^(0.6){\pr_2}  \ar[d] & \Y \ar[d]^g \\
          \X \ar[r]_{g\circ f} \ar[ru]^f \ar@/^/ [u]^{(\id_{\X},f)}& \Z   }$$
    Since $f$ is the composition of two representable morphisms $(\id_{\X},f)$ and $\pr_2$, it is
    representable itself.
 \end{proof}

 \begin{prop}{\label{P:nonembedding}}
   Let  $X$ be a  topological space and $\Y$ 
   a stack with representable diagonal (e.g., a topological stack). Let $c_{\Y} \: \Y \to \Map(X,\Y)$ 
   be the map parametrizing the constant maps. Then, $c_{\Y}$ is representable.
   If $X$ is connected, then, for every point $y$ in $\Y$,
   the fiber of $c_{\Y}$ over the point  in $\Map(X,\Y)$ 
   corresponding to the constant map
   at $y$ is homeomorphic to the space $\Map_*(X,I_y)$ of pointed maps
   from $X$ to the inertia group $I_y$ (where we have fixed
   a base point in $X$). 
   (Note that, since $\Y$ has representable diagonal, the inertia stack is representable over $\Y$, so
   the groups $I_y$ are naturally topological groups.)
 \end{prop}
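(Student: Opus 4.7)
The strategy is to prove the two assertions in sequence: first the representability of $c_{\Y}$, and then the identification of the fiber. Fix a base point $x_0 \in X$ and consider the evaluation map $\ev_{x_0} \: \Map(X, \Y) \to \Y$. The composition $\ev_{x_0} \circ c_{\Y}$ equals $\id_{\Y}$, which is trivially representable, so Lemma \ref{L:rep} reduces the representability of $c_{\Y}$ to the representability of the diagonal of $\Map(X, \Y)$. The latter follows from the hypothesis that $\Y$ has representable diagonal: for $f, g \: T \to \Map(X, \Y)$ corresponding by the exponential law to $F, G \: T \x X \to \Y$, the sheaf of 2-isomorphisms $f \Rightarrow g$ on $T$ is the $T$-relative section sheaf of the iso-space $\mathrm{Isom}_{\Y}(F, G) \to T \x X$, which is an honest topological space by the hypothesis on $\Y$; this section sheaf is representable in $\CGTop$ via the adjunction and mapping-space machinery of \cite{Mapping}.

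For the fiber computation, let $F_y$ denote $\Y \x_{c_{\Y}, \Map(X,\Y), c_{\Y}(y)} *$. A $T'$-point of $F_y$ is a pair $(y' \: T' \to \Y, \alpha \: y' \circ \pr \Rightarrow y \circ \pr)$, where $\pr \: T' \x X \to T'$ and $y \circ \pr$ denotes the constant pullback. Evaluating $\alpha$ along $x_0$ yields a 2-iso $\alpha_0 \: y' \Rightarrow y$ in $\Y(T')$; the pair $(y', \alpha_0)$ is canonically isomorphic to $(y, \id)$ inside $F_y$, so every $T'$-point is uniquely representable in the form $(y, \beta)$ with $\beta \: y \circ \pr \Rightarrow y \circ \pr$ satisfying $\beta|_{T' \x \{x_0\}} = \id$. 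Since $\Y$ has representable inertia, automorphisms of the constant map $y \circ \pr \: T' \x X \to \Y$ are in natural bijection with continuous maps $T' \x X \to I_y$; the condition at $x_0$ says the corresponding map sends $T' \x \{x_0\}$ to the identity, i.e., is pointed. By the exponential adjunction this is precisely a $T'$-point of $\Map_*(X, I_y)$, whence $F_y \simeq \Map_*(X, I_y)$. A short check confirms $F_y$ has trivial automorphisms (any self-map $\gamma$ of $(y, \beta)$ must satisfy $c_{\Y}(\gamma) = \id$, hence $\gamma = \id$ by restriction to $x_0$), so $F_y$ is a set, topologized compatibly with the compact-open topology on $\Map_*(X, I_y)$.

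The main technical obstacle is the $\CGTop$-representability of the $T$-relative section sheaf required for the diagonal of $\Map(X, \Y)$; this relies on the mapping-stack machinery of \cite{Mapping}. Once this input is in hand, the fiber identification is essentially formal: the base point $x_0$ kills the ambiguity between $y'$ and $y$, and representability of the inertia converts 2-morphisms of constant maps into honest $I_y$-valued functions, whose pointedness at $x_0$ corresponds exactly to the condition imposed by the fiber product. The connectedness hypothesis on $X$ enters in ensuring that the component of $\mathrm{Isom}_{\Y}(y', y)$ selected by $\alpha_0$ yields a coherent global trivialization.
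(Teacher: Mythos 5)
Your argument is correct and follows essentially the same route as the paper: representability of $c_{\Y}$ via the left inverse $\ev_{x_0}$, Lemma \ref{L:rep}, and the representable diagonal of $\Map(X,\Y)$ (which the paper simply cites from \cite{Mapping}, Lemma 4.1, rather than re-deriving), and then a direct functor-of-points computation of the fiber. Your normalization at the base point is just another phrasing of the paper's identification of the fiber as the quotient of $\Map(X,I_y)$ by the constant maps, so there is nothing essentially different here.
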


 \begin{proof}
   By (\cite{Mapping}, Lemma 4.1), $\Map(X,\Y)$  has representable diagonal.
   Since $c_{\Y}$ has a
   left inverse, it follows from Lemma \ref{L:rep} that $c_{\Y}$ is representable.
   
   By simply writing down the definition of the fiber of a moprhism over a point,
   it follows that the fiber of $c_{\Y}$ over the point in $ \Map(X,\Y)$ 
   corresponding to the constant map $y$
   is equal to the quotient of $\Map(X,I_y)$ by the subgroup of constant
   maps $X \to I_y$. This quotient is homeomorphic to  $\Map_*(X,I_y)$.
 \end{proof}

 \begin{cor}{\label{C:embedding}}
    The map $i_f \: \X \to \tX$  of Theorem \ref{T:replacement} is representable.
 \end{cor}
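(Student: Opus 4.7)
The plan is to realize $i_f$ as a base change of the constant-path map $c_{\Y} \: \Y \to P\Y$ and then invoke Proposition \ref{P:nonembedding} applied to the topological space $X = [0,1]$.

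First I would verify that the square
$$\xymatrix@M=6pt{\X \ar[r]^{i_f} \ar[d]_f & \tX \ar[d]^{\pr_2} \\ \Y \ar[r]_{c_{\Y}} & P\Y}$$
is 2-cartesian, where $\pr_2 \: \tX = \X \times_{f,\Y,\ev_0} P\Y \to P\Y$ is the second projection. The square commutes (up to the canonical 2-isomorphism) by definition, since $i_f = (\id_{\X}, c_{\Y}\circ f)$ and $\pr_2 \circ i_f = c_{\Y}\circ f$. For the universal property of the pullback, unwinding the definitions shows that an object of $\Y \times_{c_{\Y}, P\Y, \pr_2} \tX$ consists of $y \in \Y$, $(x,\gamma) \in \tX$ (so $\ev_0(\gamma) \simeq f(x)$), and an identification of the constant path $c_{\Y}(y)$ with $\gamma$; the latter forces $\gamma$ to be a constant path and $y \simeq f(x)$, giving a natural equivalence with $\X$ under which the projection to $\tX$ becomes $i_f$.

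Second, I would apply Proposition \ref{P:nonembedding} with $X = [0,1]$ (a connected topological space) to conclude that $c_{\Y} \: \Y \to \Map([0,1],\Y) = P\Y$ is representable; here we use that the topological stack $\Y$ has representable diagonal, which is a standing hypothesis on topological stacks. Finally, since representability of morphisms of stacks is stable under base change, the pullback $i_f$ of the representable morphism $c_{\Y}$ is itself representable.

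The argument is short because the substantive content has been absorbed into Proposition \ref{P:nonembedding}; the only point requiring any care is the verification that the square above really is 2-cartesian, and even that is formal once one writes out what a constant path is. There is no genuine obstacle.
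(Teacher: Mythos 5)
Your proof is correct and takes essentially the same route as the paper: the paper's one-line proof cites Proposition \ref{P:nonembedding}, the intended point being precisely that $c_{\Y}\:\Y\to P\Y=\Map([0,1],\Y)$ is representable and $i_f$ is a base change of it, which is exactly the 2-cartesian square you verify. Your filling-in of that verification is routine and sound, so nothing further is needed.
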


 \begin{proof}
  Follows immediately from Proposition \ref{P:nonembedding}.
 \end{proof}

 \begin{defn}{\label{D:hfib}}
  Let $f \: \X \to \Y$ and $g \: \Z \to \Y$ be  morphisms   of  stacks.
  We define the {\bf homotopy fiber product} of $\X$ and $\Z$ over $\Y$
  to be   
       $$\X\times_{\Y}^h\Z:=\tX\times_{p_f,\Y,g}\Z=
           \X\times_{f,\Y,\ev_0}P\Y\times_{\ev_1,\Y,g}\Z,$$ 
  where $p_f$ is as in Theorem \ref{T:replacement}.   
  For a point $y\: * \to \Y$ in $\Y$, we define the {\bf homotopy fiber} of $f$ over  $y$   to be
       $$\hFib_y(f):= \tX\times_{p_f,\Y,y}*.$$
 \end{defn}
 
Since the 2-category of topological stacks is closed under fiber products, 
the homotopy fiber product  of topological stacks (and, in particular, the homotopy fiber
of a morphism of topological stacks)  is again
a topological  stack. There is a natural map 
      $$j_y \: \Fib_y(f) \to \hFib_y(f),$$ 
where $\Fib_y(f):=\X\times_{f,\Y,y}*$ is the fiber of $f$ over $y$. 
It follows from Theorem \ref{T:fhes} and the observation in the next paragraph  that
if $f$ is a weak Serre fibration, then $j_y$ is a weak  equivalence.

Since $p_f$ is a Hurewicz (hence Serre) fibration,  Theorem \ref{T:fhes}
gives a long exact sequence
    $$\cdots\to\pi_{n+1}\X\to\pi_{n+1}\Y\to\pi_n\hFib_y(f)\to\pi_n\X
           \to\pi_n\Y\to\pi_{n-1}\hFib_y(f)\to\cdots$$
on homotopy groups. Here, we have fixed
a base point $x \: * \to \X$ in $\X$, and set $y=f(x)$. (Note that $x$  gives
rise to a base point in $\Fib_y(f)$, and also to one in $\hFib_y(f)$ through the map $j_y$.) 

The construction of the fibration replacement (Theorem \ref{T:replacement}) is functorial
in the map $f$, in the sense that, given a 2-commutative square
      $$\xymatrix@M=6pt{\X' \ar[d]_{f'} \ar[r] & \X \ar[d]^f \\
       \Y' \ar[r] & \Y
     }$$
we get an induced map on the corresponding replacements, 2-commuting with all the relevant data.
It follows that the homotopy fiber  $\hFib_y(f)$ is also functorial (upon fixing a base point $y'$ in 
$\Y'$, and   letting $y$ be its image in $\Y$). In particular, the resulting fiber homotopy exact
sequences for $f$ and $f'$ are also functorial. 

The following lemma follows immediately from the fiber homotopy exact sequnece.

 \begin{lem}{\label{L:hfib}}
  In the 2-commutative square  above, if the vertical maps
  are weak equivalences, then so is the induced map
           $\hFib_y(f') \to \hFib_y(f)$
  on homotopy fibers.  
 \end{lem}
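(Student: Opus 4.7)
The plan is to apply the fiber homotopy exact sequence (Theorem \ref{T:fhes}) to the fibration replacements $p_f \: \tX \to \Y$ and $p_{f'} \: \tX' \to \Y'$ supplied by Theorem \ref{T:replacement}, and then chase the resulting ladder with the five lemma. The remark preceding the statement already establishes that the replacement construction is functorial, so the given 2-commutative square extends to a 2-commutative diagram
  $$\xymatrix@M=5pt{\X' \ar[d] \ar[r]^{i_{f'}} & \tX' \ar[d]^{\tilde{u}} \ar[r]^{p_{f'}} & \Y' \ar[d] \\ \X \ar[r]_{i_f} & \tX \ar[r]_{p_f} & \Y}$$
compatible with the retractions $r_f$, $r_{f'}$, and the induced map on fibers over $y$ is the map $\hFib_y(f') \to \hFib_y(f)$ of the lemma.

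The key preliminary observation is that the middle vertical arrow $\tilde{u}$ is itself a weak equivalence. Indeed, by Theorem \ref{T:replacement}(ii) the retractions $r_f$ and $r_{f'}$ are shrinkable, hence homotopy equivalences on homotopy groups, and the commutativity of $r_f\circ\tilde{u}$ with the weak equivalence $\X'\to\X$ then forces $\tilde{u}$ to induce isomorphisms on all $\pi_n$.

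Since $p_f$ and $p_{f'}$ are Hurewicz (in particular weak Serre) fibrations, Theorem \ref{T:fhes} produces a commutative ladder of long exact sequences of homotopy groups whose rows involve $\hFib_y(f')$ and $\hFib_y(f)$, and in which the vertical maps on $\pi_n(\tX')\to\pi_n(\tX)$ and $\pi_n(\Y')\to\pi_n(\Y)$ are isomorphisms (by the preceding paragraph and by hypothesis, respectively). The five lemma then yields an isomorphism $\pi_n(\hFib_y(f'))\to\pi_n(\hFib_y(f))$ for all $n\geq 1$.

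The only genuine obstacle is the low-degree tail, where $\pi_1$ is merely a (possibly non-abelian) group and $\pi_0$ is a pointed set, so that the ordinary five lemma does not apply verbatim. However, the standard pointed-set version of the five lemma that is routinely used for fibrations of topological spaces carries over without change, and this handles the $\pi_1$ and $\pi_0$ levels, completing the argument.
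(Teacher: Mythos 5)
Your argument is correct and is essentially the paper's own: the paper simply declares that the lemma ``follows immediately from the fiber homotopy exact sequence,'' relying on the functoriality of the fibration replacement and of the resulting exact sequences noted just before the statement, which is exactly the ladder-plus-five-lemma chase you carry out (including the observation that $\tX'\to\tX$ is a weak equivalence because the shrinkable retractions $r_f$, $r_{f'}$ are). You merely make explicit the low-degree pointed-set care that the paper leaves unsaid.
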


More generally, the homotopy fiber product $\X\times_{\Y}^h\Z$ is functorial in the diagram
         $$\xymatrix@M=6pt{ & \X \ar[d]^f \\
                   \Z \ar[r]_g & \Y    }$$
in the sense that if we have a morphism of diagrams given by maps
$u \: \X' \to \X$,  $v \: \Y' \to \Y$ and $w \: \Z' \to \Z$, then we have an induced
map 
        $$\X'\times_{\Y'}^h\Z' \to \X\times_{\Y}^h\Z$$
on the homotopy fiber products.

 \begin{lem}{\label{L:hfibprod}}
  In the above situation, if $u$, $v$ and $w$ are weak equivalences, then
  so is the induced map $\X'\times_{\Y'}^h\Z' \to \X\times_{\Y}^h\Z$.
 \end{lem}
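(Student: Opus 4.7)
The plan is to exhibit both homotopy fiber products as total spaces of Hurewicz fibrations over $\Z'$ and $\Z$ respectively, with fibers equal to ordinary homotopy fibers of $f'$ and $f$, and then deduce the statement from Lemma \ref{L:hfib} and the five lemma applied to the associated fiber homotopy exact sequences (Theorem \ref{T:fhes}).

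First I would unravel the definition: $\X\times^h_{\Y}\Z = \tX\times_{p_f,\Y,g}\Z$. Since $p_f\: \tX \to \Y$ is a Hurewicz fibration by part (i) of Theorem \ref{T:replacement}, its base extension along $g \: \Z \to \Y$, namely the projection $\pi \: \X\times^h_{\Y}\Z \to \Z$, is again a Hurewicz fibration by Lemma \ref{L:basechange1}. Moreover, an inspection of the defining 2-fiber product shows that for a point $z \: * \to \Z$ mapping to $y:=g(z) \in \Y$, the fiber $\pi^{-1}(z)$ is canonically identified with $\hFib_y(f)$. The same analysis gives a Hurewicz fibration $\pi' \: \X'\times^h_{\Y'}\Z' \to \Z'$ with fiber $\hFib_{y'}(f')$ over $z' \in \Z'$.

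Next I would assemble the morphism of fibrations
$$\xymatrix@M=6pt{\X'\times^h_{\Y'}\Z' \ar[r]\ar[d]_{\pi'} & \X\times^h_{\Y}\Z \ar[d]^{\pi} \\ \Z' \ar[r]_w & \Z}$$
which, on a fiber over $z'\in \Z'$ mapping to $z\in\Z$, restricts to the functorial map $\hFib_{y'}(f') \to \hFib_y(f)$ induced by the 2-commutative square of $u,v$ over $f',f$. By hypothesis $u$ and $v$ are weak equivalences, so Lemma \ref{L:hfib} applies and tells us that this fiber map is a weak equivalence. By hypothesis $w$ is also a weak equivalence.

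Finally I would pick a base point in $\X'\times^h_{\Y'}\Z'$, push it forward to the corresponding base points in all four stacks, and invoke Theorem \ref{T:fhes} to obtain the two long exact sequences of homotopy groups. The induced ladder between them has the base and fiber columns given by weak equivalences, so the five lemma yields that the middle column $\X'\times^h_{\Y'}\Z' \to \X\times^h_{\Y}\Z$ induces isomorphisms on all $\pi_n$, $n\geq 1$; a parallel argument on $\pi_0$ (tracking basepoints in each connected component of $\Z'$ and using that $w$ is a bijection on $\pi_0$) completes the proof. The main technical care is bookkeeping of base points: one must verify, for every choice of base point of $\X'\times^h_{\Y'}\Z'$, that the comparison map between fiber homotopy sequences at this base point genuinely has weak equivalences in the relevant places; this reduces, via the functoriality of the fibrant replacement noted in the paragraph preceding the lemma, to the already established weak equivalences on $\hFib_{y'}(f')\to\hFib_y(f)$ and on $w$.
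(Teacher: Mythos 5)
Your proof is correct, but it takes a different route from the paper's. You fiber the homotopy pullback over $\Z$ via the projection $\pi\:\X\times^h_{\Y}\Z\to\Z$, which is a Hurewicz fibration because it is the base extension of $p_f$ along $g$ (Lemma \ref{L:basechange1}), and whose honest fiber over $z$ is literally $\hFib_{g(z)}(f)$; you then compare the two fiber homotopy exact sequences (Theorem \ref{T:fhes}) via the five lemma, using Lemma \ref{L:hfib} for the fiber column (since $u,v$ are weak equivalences) and $w$ for the base column, with the usual $\pi_0$/basepoint care. The paper instead maps the homotopy pullback to $\Y$ via $q=g\circ\pr_2$ and identifies $\hFib_y(q)$, up to a pair of inverse homotopy equivalences built by composing paths (which requires the gluing Lemma \ref{L:glue}), with the product $\hFib_y(f)\times\hFib_y(g)$, and then concludes from Lemma \ref{L:hfib} and the fiber homotopy exact sequence for $q$. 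Your version buys a strictly on-the-nose fiber identification and avoids both the path-composition argument and Lemma \ref{L:glue}, at the price of treating $\X$ and $\Z$ asymmetrically and carrying the basepoint and $\pi_0$ bookkeeping explicitly; the paper's version keeps the two legs symmetric and records the (useful in its own right) fact that the homotopy fiber of a homotopy fiber product over a point of $\Y$ splits, up to homotopy, as the product of the two homotopy fibers. Both arguments use the same functoriality of $\hFib$ and Lemma \ref{L:hfib} as the essential input, so your proof is a legitimate alternative.
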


 \begin{proof}
     Fix a point $y$ in $\Y$.
     Let  $q \:  \X\times_{\Y}^h\Z \to \Y$ be $g\circ\pr_2$. Using a standard argument 
     involving composing paths (for which we will need   Lemma \ref{L:glue}) we can construct
     a pair of inverse homotopy equivalences between
     $\hFib_y(f)\times\hFib_y(g)$ and $\hFib_y(q)$. The claim now follows from
     Lemma \ref{L:hfib} and the fiber homotopy exact sequence for  $q$.
 \end{proof}

\section{Leray-Serre spectral sequence}{\label{S:SS}}

In this section we use the results of the previous
sections to construct the Leray-Serre spectral sequence for a fibration of topological
stacks. As we will see shortly, there is nothing deep about the construction
of the stack version of the Leray-Serre spectral sequence, as the real work has
been done in the construction of the space version.  
All we do is to reduce the problem to the case of spaces by making careful use of 
classifying spaces for  stacks. The same method can be used to 
construct stack versions of other variants of the Leray-Serre spectral sequence as well.

\subsection{Local coefficients on topological stacks}{\label{SS:local}

Let $\X$ be a topological stack. The fundamental groupoid $\Pi\X$
of $\X$ is defined in the usual way. The objects of $\Pi\X$ are points $x \: * \to \X$. An arrow
in $\Pi\X$ from $x$ to $x'$ is a path  from $x$ to $x'$, up to
a homotopy relative to the end points. By Lemma \ref{L:glue}, $\Pi\X$ is a groupoid.

The fundamental groupoid $\Pi\X$ is functorial in $\X$.
If $\varphi \: X \to \X$ is a classifying space for $\X$, then the induced map $\Pi\varphi \:
\Pi X\to\Pi\X$ is an equivalence of groupoids.

 \begin{defn}{\label{D:localcoefficients}}
  By a {\bf system of local coefficients} on $\X$ we mean 
  a presheaf $\G$ of groups on the fundamental groupoid $\Pi\X$. In other words, a
  system of coefficients is a rule which assigns to a point $x$ in $\X$ a group $\G_x$,
  and to a homotopy class $\gamma$ of paths from $x$ to $x'$ a homomorphism
  $\gamma^*\: \G_{x'} \to \G_x$. We require that under this assignment composition of
  paths goes to composition of homomorphisms.
 \end{defn}

If $f \: \Y \to \X$ is a morphism of stacks and $\G$ a local system
of coefficients on $\X$, we obtain
a local system of coefficients $f^*\G$ on $\Y$ via the map $\Pi f \: \Pi\Y \to \Pi\X$.
If $\varphi \: X \to \X$ is a classifying space for $\X$, this pullback construction
induces a bijection (more precisely, an equivalence of categories) between local
systems on $\X$ and those on $X$.

Given a local system $\A$ of abelian groups on a topological stack $\X$, we can define {\em
homology $H_*(\X,\A)$ with coefficients in $\A$} exactly as in  (\cite{Homotopytypes}, $\S$11).
Namely, we choose a classifying space $\varphi \: X \to \X$ and set
      $$H_*(\X,\A) := H_*(X,\varphi^*\A).$$
Analogously, we can define {\em cohomology $H^*(\X,\A)$ with coefficients in $\A$}.

A very important example of a local system of coefficients is the following.
Let $f \: \X \to \Y$ be a morphism of stacks. To each point $y$ in $\Y$, associate its homotopy 
fiber $\hFib_y(f)$. For simplicity of notation, we denote $\hFib_y(f)$ by $\hFib_y$.
Given a path $\gamma$ from $y_0$ to $y_1$, we obtain a map
$e_{\gamma} \: \hFib_{y_0} \to \hFib_{y_1}$ defined by composing the path
that appears in the definition of  $\hFib_{y_0}$ with $\gamma$
(Lemma \ref{L:glue}). A standard argument shows that, given two composable
paths $\gamma$ and $\gamma'$, $e_{\gamma'}\circ e_{\gamma}$ is homotopic to
$e_{\gamma\gamma'}$. (So, in particular,  $e_{\gamma}$ is a homotopy equivalence.) 
This implies that the rule
       $$y \mapsto H^*(\hFib_y,A)$$
       $$\gamma \mapsto e_{\gamma}^*$$
is a local system of coefficients on $\Y$. Here, $A$ is an abelian group and  
         $$e_{\gamma}^* \: H^*(\hFib_{y_1},A) \to H^*(\hFib_{y_0},A)$$ 
is the induced map on cohomology.
   
As we just saw,   $e_{\gamma}$ is a homotopy equivalence. Therefore, by inverting
the direction of arrows, we find a similar local system for homology with coefficients in  $A$.
   
For a fixed abelian group $A$, it follows from the functorial properties of the homotopy fiber 
(see after Definition \ref{D:hfib}) that the above local system is functorial with respect to
2-commutative diagrams
         $$\xymatrix@M=6pt{\X' \ar[d]_{f'} \ar[r]^u & \X \ar[d]^f \\
            \Y' \ar[r]_v & \Y     }$$
Let us spell out what this means for the cohomological version. Let us call the local system associated to
$f$ by $H^*_f$. Then, the functoriality of $H^*_f$ with respect to the 2-commutative
square above means that we have a $\Pi v$-equivariant morphism of presheaves
$\rho \: H^*_{f} \to H^*_{f'}$, as in the diagram
          $$\xymatrix@M=6pt{ H^*_{f'} \ar@{-}[d]& H^*_{f} \ar[l]_{\rho} \ar@{-}[d]  \\
                 \Pi\Y' \ar[r]_{\Pi v} & \Pi\Y  }$$
(More precisely, $\rho$ is a morphism of presheaves on $\Pi\Y'$ from
the pullback of $H^*_f$ along $\Pi v$ to $H^*_{f'}$.)
It follows from Lemma \ref{L:hfib} that if $v$ and $u$ are weak equivalences, then
the above diagram is an equivalence. (More precisely, the pullback of $H^*_f$ along 
$\Pi v$ to $\Pi_{\Y}$ is equivalent to $H^*_{f'}$ via $\rho$.)
       
\subsection{Construction of the Leray-Serre spectral sequence}{\label{SS:SS}}

We now prove the existence of the Leray-Serre spectral sequence. We begin with the following.

 \begin{lem}{\label{L:replace}}
  Let $f \: \X \to \Y$ be a morphism of topological stacks. Then, there exists a 2-commutative
  square
     $$\xymatrix@M=6pt{X \ar[d]_{g} \ar[r]^{\varphi} & \X \ar[d]^f \\
        Y \ar[r]_{\psi} & \Y
     }$$
  such that  $X$ and $Y$ are topological spaces and $\varphi$ and $\psi$
  are (universal) weak equivalences. The similar statement is true when instead of a
  morphism $f \: \X \to \Y$ we start with  a diagram
       $$\xymatrix@M=6pt{ & \X \ar[d] \\
        \Z \ar[r]  & \Y
      }$$    
 \end{lem}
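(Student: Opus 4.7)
The plan is to use the existence of classifying spaces (Theorem~\ref{T:nice}) twice: once on the bottom and once on an auxiliary fiber product.

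First, I would pick a classifying space $\psi\colon Y \to \Y$ via Theorem~\ref{T:nice}. Since $Y$ is a topological space, $\psi$ is representable, and by Lemma~\ref{L:locallyshrinkable3} it is a universal weak equivalence. Form the 2-fiber product $\mathcal{X}' := \X\times_{\Y}Y$, which is a topological stack (topological stacks are closed under 2-fiber products). Let $\pi\colon \mathcal{X}' \to \X$ be the projection; being the base extension of the representable map $\psi$, it is itself representable. Moreover, $\pi$ is a universal weak equivalence: given any map $T\to\X$ from a topological space, the base extension $T\times_{\X}\mathcal{X}'=T\times_{\Y}Y\to T$ is the base extension of $\psi$ along $T\to\X\to\Y$, hence a weak equivalence.

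Next, apply Theorem~\ref{T:nice} to the stack $\mathcal{X}'$ to obtain a classifying space $\varphi'\colon X \to \mathcal{X}'$, which is a universal weak equivalence (Lemma~\ref{L:locallyshrinkable3}). Define
$$\varphi := \pi\circ\varphi' \colon X \to \X, \qquad g := \pi_Y\circ\varphi'\colon X \to Y,$$
where $\pi_Y\colon\mathcal{X}'\to Y$ is the other projection. The defining 2-cartesian square of $\mathcal{X}'$ gives $f\circ\varphi \cong \psi\circ g$, yielding the desired 2-commutative square. To see that $\varphi$ is a universal weak equivalence, for any $T\to\X$ with $T$ a topological space, factor the base extension as
$$T\times_{\X}X \;=\; (T\times_{\X}\mathcal{X}')\times_{\mathcal{X}'}X \;\longrightarrow\; T\times_{\X}\mathcal{X}' \;\longrightarrow\; T.$$
The first arrow is the base extension of $\varphi'$ (a weak equivalence), and the second is the base extension of $\pi$ (also a weak equivalence by the previous paragraph), so the composite is a weak equivalence by two-out-of-three.

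For the diagram version with $\X\xrightarrow{f}\Y\xleftarrow{h}\Z$, perform the same construction on both sides of $\Y$: start with the same classifying space $\psi\colon Y\to\Y$, form both $\mathcal{X}'=\X\times_{\Y}Y$ and $\mathcal{Z}'=\Z\times_{\Y}Y$, and choose classifying spaces $X\to\mathcal{X}'$ and $Z\to\mathcal{Z}'$. Composing with the projections gives universal weak equivalences $X\to\X$, $Y\to\Y$, $Z\to\Z$, together with the required maps $X\to Y\leftarrow Z$ making the two resulting squares 2-commute. The main technical point, and the only step requiring care, is the verification that $\varphi$ (and analogously the map from $Z$) is a universal weak equivalence; this hinges on the fact that representable universal weak equivalences are stable under base extension and composition, which follows from the two-out-of-three property applied after any further pullback to a space.
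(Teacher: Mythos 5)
Your proposal is correct and follows essentially the same route as the paper: choose a classifying space $\psi\colon Y\to\Y$, pull back to form $\X\times_{\Y}Y$, take a classifying space $X$ of that fiber product, and compose with the projections; the diagram case is handled the same way. The paper states this construction without spelling out why the composite $X\to\X$ is a universal weak equivalence, which you verify explicitly, so your write-up is just a more detailed version of the same argument.
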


 \begin{proof}
     We only prove the morphism case. The proof of the diagram case is similar.
     Choose a classifying space $\psi \: Y \to \Y$, and set $\X_0=Y\times_{\Y}\X$.
     Take a classifying space $\varphi_0 \: X \to \X_0$ for $\X_0$, and set
     $\varphi := \pr_2\circ\varphi_0$ and $g:=\pr_1\circ\varphi_0$.
 \end{proof}

We now formulate the Leray-Serre spectral sequence as in \cite{McCleary}.

 \begin{thm}[{\bf the homology Leray-Serre spectral sequence}]{\label{T:HSS}}
  Let $A$ be an abelian group.
  Let $f \: \X \to \Y$ be a morphism of topological stacks with $\Y$ path-connected and
  $\F:=\hFib(f)$ connected. Then, there is a first quadrant spectral sequence $\{E_{*,*}^r,d^r\}$
  converging to $H_*(\X,A)$, with
       $$E_{p,q}^2\cong H_p\big(\Y, \HH_q(\F,A)\big),$$
  the homology of $\Y$ with local coefficients in the homology of the homotopy fiber 
  $\F$ of $f$. This spectral sequence is natural with respect to 2-commutative squares
      $$\xymatrix@M=6pt{\X \ar[d]_{f} \ar[r] & \X' \ar[d]^{f'} \\
         \Y \ar[r] & \Y'  }$$
  If $f$ is a weak Serre fibration with fiber $\F$, then  
          $$E_{p,q}^2\cong H_p(\Y, \HH_q(\F,A)).$$
 \end{thm}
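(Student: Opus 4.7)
The plan is to reduce the statement to the classical Leray-Serre spectral sequence for a Serre fibration of topological spaces, by combining the fibration replacement of Theorem \ref{T:replacement} with the classifying space machinery of Theorem \ref{T:nice}.

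First I would replace $f \: \X \to \Y$ by the Hurewicz fibration $p_f \: \tX \to \Y$ of Theorem \ref{T:replacement}. The retraction $r_f \: \tX \to \X$ is shrinkable, hence a weak equivalence, so the two maps have canonically isomorphic local (co)homology groups ($\S$\ref{SS:local}) and canonically weakly equivalent homotopy fibers (Lemma \ref{L:hfib}). Next I would apply Lemma \ref{L:replace} to $p_f$ to obtain a 2-commutative square
$$\xymatrix@M=6pt{X \ar[d]_{g} \ar[r]^{\varphi} & \tX \ar[d]^{p_f} \\
        Y \ar[r]_{\psi} & \Y }$$
with $X$ and $Y$ topological spaces and $\varphi, \psi$ universal weak equivalences. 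Since $g$ need not be a classical Serre fibration, I would further replace it by its mapping-path fibration $\tilde g \: \tilde X \to Y$ in $\Top$, through a homotopy equivalence $X \hra \tilde X$. The classical Leray-Serre spectral sequence applied to $\tilde g$ gives a first-quadrant spectral sequence converging to $H_*(\tilde X, A) \cong H_*(X, A) \cong H_*(\X, A)$, with $E^2_{p,q} \cong H_p(Y, \HH_q(\tilde{\F}, A))$, where $\tilde{\F}$ denotes the fiber of $\tilde g$.

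The hard part will be identifying the local coefficient system on $Y$ with the pullback along $\psi$ of the homotopy-fiber local system on $\Y$ described in $\S$\ref{SS:local}. Pointwise, for each $y \in Y$ there is a chain of weak equivalences $\tilde{\F}_y \simeq \hFib_y(g) \simeq \hFib_{\psi(y)}(p_f) \simeq \hFib_{\psi(y)}(f)$ provided by Lemmas \ref{L:hfib} and \ref{L:hfibprod}. The functoriality of homotopy fibers discussed after Definition \ref{D:hfib} then promotes these pointwise identifications to an equivalence of local systems on $Y$, equivariant for the equivalence of fundamental groupoids $\Pi\psi \: \Pi Y \risom \Pi\Y$. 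Combined with the definition of (co)homology with local coefficients on a stack via a classifying space ($\S$\ref{SS:local}), this yields $H_p(Y, \HH_q(\tilde{\F}, A)) \cong H_p(\Y, \HH_q(\F, A))$, as required.

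Naturality with respect to a 2-commutative square of stacks comes from the naturality of $p_f$ in $f$ together with functorial choices of classifying spaces: given a morphism of such squares, one first replaces both sides by fibrations, then takes a classifying space of the target stack and lifts through the pulled-back atlases, so that the whole construction becomes a functor on squares (and thus induces morphisms of spectral sequences). Finally, when $f$ itself is a weak Serre fibration, Theorem \ref{T:fhes} implies that the natural map $\Fib_y(f) \to \hFib_y(f)$ is a weak equivalence for every $y$, so the local systems $y \mapsto \HH_q(\Fib_y(f), A)$ and $y \mapsto \HH_q(\hFib_y(f), A)$ agree, and the $E^2$-page takes the asserted form in terms of the genuine fiber $\F = \Fib(f)$.
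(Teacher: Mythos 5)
Your proposal is correct and follows essentially the same route as the paper: replace $f$ by a map of classifying spaces via Lemma \ref{L:replace}, apply the classical Leray--Serre spectral sequence (after a mapping-path fibration replacement in $\Top$), use Lemma \ref{L:hfib} to identify homotopy fibers and hence the local coefficient systems, and invoke Theorem \ref{T:fhes} for the case of a weak Serre fibration. Your preliminary replacement of $f$ by $p_f$ is harmless but redundant, since $\hFib_y(f)$ is already defined via $p_f$ and Lemma \ref{L:hfib} applies directly to the square produced by Lemma \ref{L:replace}.
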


 \begin{proof}
   Use Lemma \ref{L:replace} to replace $f \: \X \to \Y$ by a  continuous map
   $g \: X \to Y$ of topological spaces.  Thanks to Lemma \ref{L:hfib}, the Leray-Serre
   spectral sequence for $g$ (see \cite{McCleary}) gives rise to the desired spectral sequence for $f$.
   The functoriality (and the fact that the resulting spectral sequence is independent of the choice of
   $g$) follows from a similar reasoning as  in (\cite{Homotopytypes}, $\S$11).
   
   The last part of the theorem follows from the fact that, when $f$ is a weak Serre fibration, 
   then the natural map $j_y \: \F\to \hFib_y(f)$ is a weak equivalence (use Theorem \ref{T:fhes}). 
 \end{proof}

 \begin{thm}[{\bf the cohomology Leray-Serre spectral sequence}]{\label{T:CSS}}
  Let $R$ be a commutative ring with unit. Let $f \: \X \to \Y$ be a morphism of 
  topological stacks with $\Y$ path-connected and
  $\F:=\hFib(f)$ connected. Then, there is a first quadrant spectral sequence
  of algebras  $\{E^{*,*}_r,d_r\}$ converging to $H^*(\X,R)$ as an algebra, with
       $$E^{p,q}_2\cong H^p(\Y, \HH^q\big(\F,R)\big),$$
  the cohomology of $\Y$ with local coefficients in the cohomology of the homotopy fiber 
  $\F$ of $f$. This spectral sequence is natural with respect to 
  2-commutative squares
            $$\xymatrix@M=6pt{\X' \ar[d]_{f'} \ar[r] & \X \ar[d]^f \\
                       \Y' \ar[r] & \Y }$$
  Furthermore, the cup product $\smile$ on cohomology with local coefficients and the
  product $\cdot_2$ on $E^{*,*}_2$ are related by
  $u\cdot_2 v = (-1)^{p'q}u\smile v$, when $u\in E^{p,q}_2$ and $v\in E^{p',q'}_2$.
  If $f$ is a weak Serre fibration with fiber $\F$, then  
        $$E^{p,q}_2\cong H^p(\Y, \HH^q(\F,R)).$$
 \end{thm}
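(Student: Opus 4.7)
The plan is to mirror the proof of Theorem \ref{T:HSS} step by step, with extra attention paid to the multiplicative structure. First I would invoke Lemma \ref{L:replace} to produce a 2-commutative square
  $$\xymatrix@M=6pt{X \ar[d]_{g} \ar[r]^{\varphi} & \X \ar[d]^f \\ Y \ar[r]_{\psi} & \Y}$$
in which $X$ and $Y$ are honest topological spaces and $\varphi,\psi$ are (universal) weak equivalences. This reduces the problem to the classical cohomology Leray-Serre spectral sequence of a continuous map $g\: X\to Y$, which is treated in \cite{McCleary} as a spectral sequence of algebras converging to $H^*(X,R)$.

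Next I would transfer this spectral sequence back to $f$. By definition, $H^*(\X,R):=H^*(X,\varphi^*R)$ (see $\S$\ref{SS:local}), so the abutments match. For the $E_2$-page I would use Lemma \ref{L:hfibprod} (or rather Lemma \ref{L:hfib} applied pointwise over $\Pi\Y$), which says that the local system $y\mapsto H^q(\hFib_y(f),R)$ on $\Pi\Y$ pulls back along $\Pi\psi$ to the corresponding local system on $\Pi Y$ for $g$; combined with the fact that $\psi$ induces an isomorphism on cohomology with local coefficients, this identifies $H^p(\Y,\HH^q(\F,R))$ with $H^p(Y,\HH^q(\hFib(g),R))$.

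For the multiplicative statement, I would note that the classical spectral sequence is one of algebras and that the cup product on $H^*(\X,R)$ is defined through the classifying space $\varphi$, so the pullback identifications respect cup products; the sign convention $u\cdot_2 v=(-1)^{p'q}u\smile v$ is inherited verbatim from the spatial case. Naturality with respect to a 2-commutative square reduces, after choosing a compatible classifying square for both the source and target diagrams (using the diagram version of Lemma \ref{L:replace}), to the classical naturality of the Leray-Serre spectral sequence. Independence of the choice of classifying spaces follows from the same argument used in \cite{Homotopytypes}, $\S$11, namely that any two such choices can be dominated by a third via universal weak equivalences. Finally, the identification $E_2^{p,q}\cong H^p(\Y,\HH^q(\F,R))$ when $f$ is a weak Serre fibration comes from Theorem \ref{T:fhes}, which implies that the natural map $j_y\:\F\to\hFib_y(f)$ is a weak equivalence, hence induces isomorphisms on $H^q(-,R)$.

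The main obstacle I anticipate is verifying that the algebra structure on the $E_2$-page matches on both sides of the replacement, since the cup product on cohomology of a stack with local coefficients is defined somewhat indirectly via the classifying space; however, since the replacement $\varphi\: X\to\X$ is itself used to define both sides, and since the Leray-Serre differentials and products are natural under maps of fibrations, this compatibility reduces to the spatial case and is not a genuine new difficulty.
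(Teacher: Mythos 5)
Your proposal is correct and follows essentially the same route as the paper, whose proof of Theorem \ref{T:CSS} is simply ``the same proof as Theorem \ref{T:HSS}'': reduce to a continuous map of classifying spaces via Lemma \ref{L:replace}, transfer the classical multiplicative Leray-Serre spectral sequence back using Lemma \ref{L:hfib} and the definition of (co)homology of stacks via classifying spaces, handle functoriality and independence of choices as in \cite{Homotopytypes}, $\S$11, and use Theorem \ref{T:fhes} for the weak Serre fibration case. No further comment is needed.
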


 \begin{proof}
   The same proof as Theorem \ref{T:HSS}.
 \end{proof}

\section{Eilenberg-Moore spectral sequence}{\label{S:EM}}

In this section we present the Eilenberg-Moore spectral sequence for fibrations of topological stacks.

 \begin{thm}{\label{T:EM}}
  Let $k$ be a field. Let $\Y$ be a simply-connected topological
  stack, and consider the diagram
          $$\xymatrix@M=6pt{ & \X \ar[d]^f \\
              \Z \ar[r]_g  & \Y
             }$$  
  of topological stacks. Then, there is a second quadrant spectral sequence with
        $$E_2^{*,*} \cong \Tor_{H^*(\Y,k)}\big(H^*(\X,k),H^*(\Z,k)\big)$$
  converging strongly to $H^*(\PP,k)$, where $\PP:=\X\times_{\Y}^h\Z$
  is the homotopy fiber product (Definition \ref{D:hfib}). This spectral sequence is 
  natural with respect to morphisms of diagrams.  
 \end{thm}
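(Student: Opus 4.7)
The strategy mirrors that of the Leray–Serre proof (Theorems \ref{T:HSS} and \ref{T:CSS}): reduce to the case of topological spaces, invoke the classical Eilenberg–Moore spectral sequence, and then transport the result back using the weak equivalence invariance of the homotopy fiber product.

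First I would apply the diagram version of Lemma \ref{L:replace} to the cospan $\X \to \Y \leftarrow \Z$. This yields a 2-commutative diagram
$$\xymatrix@M=6pt{X \ar[d] \ar[r]^{\varphi_{\X}} & \X \ar[d]^f \\
Y \ar[r]^{\varphi_{\Y}} & \Y \\
Z \ar[u] \ar[r]_{\varphi_{\Z}} & \Z \ar[u]_g}$$
with $X$, $Y$, $Z$ topological spaces and the horizontal maps universal weak equivalences. Since $\varphi_{\Y}$ is a weak equivalence and $\Y$ is simply-connected, $Y$ is also simply-connected, which is the essential hypothesis for the classical Eilenberg–Moore spectral sequence. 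By definition of cohomology via classifying spaces (\S\ref{SS:local}), the induced maps $H^*(\X,k)\to H^*(X,k)$, $H^*(\Y,k)\to H^*(Y,k)$, $H^*(\Z,k)\to H^*(Z,k)$ are isomorphisms of $k$-algebras (compatible with the module structures over $H^*(\Y,k)\cong H^*(Y,k)$).

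Next I would invoke the classical Eilenberg–Moore spectral sequence for the resulting diagram of spaces (see, e.g., McCleary). After replacing $X\to Y$ by a Hurewicz fibration $X'\to Y$ in $\Top$ (using the ordinary mapping path construction) and setting $P:=X'\times_Y Z$, the classical theorem gives a second quadrant spectral sequence
$$E_2^{*,*}\cong \Tor_{H^*(Y,k)}\bigl(H^*(X,k),H^*(Z,k)\bigr)\Longrightarrow H^*(P,k).$$
Note that $P$ is a model for the homotopy fiber product $X\times_Y^h Z$ of spaces.

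The final step is to identify $H^*(P,k)$ with $H^*(\PP,k)$. Applying Lemma \ref{L:hfibprod} to the morphism of diagrams determined by $\varphi_{\X}$, $\varphi_{\Y}$, $\varphi_{\Z}$ (all weak equivalences), I conclude that the induced map
$$X\times_Y^h Z \longrightarrow \X\times_{\Y}^h\Z = \PP$$
is a weak equivalence, and hence induces an isomorphism on $H^*(-,k)$. Since $P\simeq X\times_Y^h Z$ (the fiber product along a fibration computes the homotopy fiber product), we get $H^*(P,k)\cong H^*(\PP,k)$, and under the $H^*(\Y,k)$-algebra isomorphisms from the previous paragraph the $E_2$-term matches $\Tor_{H^*(\Y,k)}(H^*(\X,k),H^*(\Z,k))$. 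Naturality with respect to morphisms of diagrams follows by a standard argument: any morphism of cospans of stacks can be covered (up to homotopy) by a morphism of the corresponding space-level cospans, and the resulting ambiguity is absorbed as in (\cite{Homotopytypes}, \S 11), where the analogous independence-of-choices argument is carried out.

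The main obstacle I anticipate is the naturality/well-definedness statement: while the construction on each individual diagram is immediate from the space-level theory, showing that the resulting spectral sequence is independent of the choice of classifying-space replacement, and functorial with respect to morphisms of cospans of stacks, requires the same careful bookkeeping as in the Leray–Serre case. The key technical input is Lemma \ref{L:hfibprod}, which guarantees that any weak equivalence of cospans induces a weak equivalence on homotopy fiber products and hence an isomorphism on cohomology; this is what makes the reduction to spaces legitimate and the resulting spectral sequence canonical.
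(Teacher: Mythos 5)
Your proposal is correct and follows essentially the same route as the paper's own proof: replace the cospan by a diagram of classifying spaces via the diagram version of Lemma \ref{L:replace}, apply the classical Eilenberg--Moore spectral sequence, and use Lemma \ref{L:hfibprod} to transfer the result back, with naturality and independence of choices handled as in (\cite{Homotopytypes}, \S 11). No gaps to report.
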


 \begin{proof}
  The proof is similar to the proof of Theorem \ref{T:HSS}. First we use Lemma \ref{L:replace}
  to replace the given diagram by a similar diagram of classifying spaces. Then,
  by Lemma  \ref{L:hfibprod}, the Eilenberg-Moore spectral sequence for the corresponding
  diagram of spaces (see \cite{McCleary}) gives 
  rise to the Eilenberg-Moore spectral sequence for the
  original diagram. The functoriality (and the proof that the resulting spectral sequence is 
  independent of the choice of the diagram of classifying
  spaces) follows from a similar reasoning as in (\cite{Homotopytypes}, $\S$11).
 \end{proof}


\providecommand{\bysame}{\leavevmode\hbox
to3em{\hrulefill}\thinspace}
\providecommand{\MR}{\relax\ifhmode\unskip\space\fi MR }
\providecommand{\MRhref}[2]{%
  \href{http://www.ams.org/mathscinet-getitem?mr=#1}{#2}
} \providecommand{\href}[2]{#2}


\begin{thebibliography}{10}



\bibitem[Do]{Dold} A.~Dold, \emph{Partitions of unity in the theory
of fibrations}, Ann. of Math. (2) \textbf{78},   (1963), 223--255.

\bibitem[McCl]{McCleary} J.~McCleary, \emph{A user's guide to spectral sequences}, Second edition. 
Cambridge Studies in Advanced Mathematics, 58. Cambridge University Press, Cambridge, 2001.

\bibitem[No1]{Foundations}  B.~Noohi, \emph{Foundations of topological
 stacks, I}, math.AG/0503247v1.

\bibitem[No2]{Homotopytypes}  \bysame, \emph{Homotopy types of topological
stacks}, arXiv:0808.3799v1 [math.AT].

\bibitem[No3]{Slice} \bysame, \emph{Fundamental groups of topological stacks with slice
property}, Algebraic and Geometric Topology, \textbf{8} (2008) 1333-–1370.
 \vspace{.1cm}

\bibitem[No4]{Mapping}  \bysame, \emph{Mapping stacks of topological stacks},
to appear in Crelle.

\bibitem[No5]{Rapid}  \bysame, \emph{A rapid introduction to fibered categories and 
topological stacks}, available at \texttt{http://www.mth.kcl.ac.uk/$\sim$noohi/papers/quick.pdf}

\bibitem[Wh]{Whitehead}
 G.~W.~Whitehead, \emph{Elements of Homotopy Theory}, Graduate Texts
 in Mathematics, 61. Springer-Verlag, New York--Berlin, 1978.

\end{thebibliography}
\end{document}